\theoremstyle{plain}
\newtheorem{thm}{Theorem}[section]
\newtheorem{lem}[thm]{Lemma}
\newtheorem{cor}[thm]{Corollary}
\newtheorem{defn-lem}[thm]{Definition-Lemma}
\newtheorem{prop}[thm]{Proposition}
\theoremstyle{definition}
\newtheorem{defn}[thm]{Definition}
\newtheorem{rem}[thm]{Remark}
\def\md #1#2#3#4#5 {\left(
                        \begin{matrix}
             #1 & #2 \\
             #3 & #4
                        \end{matrix}
                      \right)- #5}
\def\ma #1#2#3#4 {\left(
                        \begin{matrix}
             #1 & #2 \\
             #3 & #4
                        \end{matrix}
                      \right)}
\def \mu  {\mathcal{M}(C(X,B))}
\def\Index{\operatorname{Index}}
\def\Ad{\operatorname{Ad}}
\def\id{\operatorname{id}}
\newcommand{\mc}{\mathcal}
\begin{document}
\title [A tracially sequentially-split $\sp*$-homomorphisms between $C\sp*$-algebras II]
       { On Dualities of actions and inclusions}

\begin{abstract}
 Following the results known in the case of a finite abelian group action on $C\sp*$-algebras we prove the following two theorems;
\begin{itemize}
\item an inclusion $P\subset A$ of (Watatani) index-finite type has the Rokhlin property (is approximately representable) if and only if the dual inclusion is approximately representable (has the Rokhlin property).
\item an inclusion $P\subset A$ of  (Watatani) index-finite type has the tracial Rokhlin property (is tracially approximately representable) if and only if the dual inclusion is tracially approximately representable (has the tracial Rokhlin property).
\end{itemize}
Moreover, we provide an alternate proof of Phillips' theorem about the relations between tracial Rokhlin action and tracially approximate representable dual action using a new conceptual framework suggested by authors.
\end{abstract}

\author { Hyun Ho \, Lee and Hiroyuki Osaka}

\address {Department of Mathematics\\
          University of Ulsan\\
         Ulsan, South Korea 44610 \\
 and \\
School of Mathematics\\ Korea Institute of Advanced Study\\
Seoul, South Korea, 130-772 }
\email{hadamard@ulsan.ac.kr}

\address{Department of Mathematical Sciences\\
Ritsumeikan University\\
Kusatsu, Shiga 525-8577, Japan}
\email{osaka@se.ritsumei.ac.jp}

\keywords{Tracially sequentially-split map, (tracial) Rokhlin property,  (tracial) Approximate representability, Inclusion of $C\sp*$-algebras}

\subjclass[2000]{Primary:46L35. Secondary:47C15}
\date{}
\thanks{The first author's research was supported by Basic Science Research Program through the National Research Foundation of Korea(NRF) funded by the Ministry of Education(NRF-2015R1D1A1A01057489)\\
The second author's research was partially supported by the JSPS grant for Scientific Research No. 17K05285.}
\maketitle

\section{Introduction}
In \cite{Izumi:finite} M.Izumi introduced the Rokhlin property for a finite group action on $C\sp*$-algebras. In addition, he showed that a finite abelian group action has the Rokhlin property if and only if the dual action is approximately representable which means that the action is strongly approximately inner. This observation is sometimes useful and easier to verify rather than the Rokhlin property itself. Based on Izumi's works, the second named author, Kodaka, and Teruya extend notions of the Rokhlin property and the approximate representability for inclusions of unital $C\sp*$-algebras.  In this note we show that an inclusion has the Rokhlin property if and only if its dual inclusion is approximately representable. 

On the other hand, a great success of classifying nuclear simple tracially AF $C\sp*$-algebras satisfying the UCT \cite{Lin:tracial, Lin:classification} provided  a conceptual revolution in the Eliiott program; no inductive limit structure is assumed. The added flexibility to allow ``small'' tracial error in the local approximation  led to important advances   in the theory of $C\sp*$-algebras. It is thus desirable to expect  ``tracial'' versions of other $C\sp*$-algebra concepts, see \cite{HO},\cite{EN} for instance. But also in part due to the fact that often the Rokhlin property imposes several restrictions  on the K-theory of the original algebra and the K-theory of the crossed product algebra it is expected to have a less restrictive or tracial version of the Rokhlin property, and such a notion was suggested by N.C. Phillips, which is called the tracial Rokhlin property. He then extended many statements appeared in \cite{Izumi:finite} when a finite group action on the infinite dimensional simple separable unital $C\sp*$-algebra has the tracial Rokhlin property. In particular, he shows that a finite abelian group action has the tracial Rokhlin property if and only if the dual group action is tracially approximately representable (see Theorem \ref{T:dualityofgroupaction} or \cite[Theorem 3.11]{Phillips:tracial}). In this note we are going to define the tracial versions of the Rokhlin property and the approximate representability for inclusions of unital $C\sp*$-algebras and show a duality between them. 

One might think that it is not difficult to find analogous notions while extending original ones. But in our case, though the pair of the crossed product algebra and the original algebra is a standard model for inclusions of $C\sp*$-algebras, it is not obvious which part of the definition is made flexible; the tracial Rokhlin property is straightforward but not at all for the tracial approximate representability. Thus we wish to point out a philosophical guiding principle; a regularity property involving two objects can be expressed as a certain property of a map between two objects. More precisely, Barlak and Szab\'{o} \cite{BS} provide an unified conceptual framework to deal with the permanence of various regularity properties from the target algebra to the domain algebra in the name of  sequentially split $*$-homomorphisms between $C\sp*$-algebras. This concept is actually originated from Toms and Winter's characterization of $D$-stability, where $D$ is a unital strongly self-absorbing $C\sp*$-algebra, but nicely fits into the Rokhlin property as well. A tracial version of the concept has been suggested by the authors in \cite{LeeOsaka} and  turned out to work nicely with tracial versions of regularity properties, for instance tracial $\mc{Z}$-stability and tracial Rokhlin property. In this note we solidify our guiding principle by exhibiting that the Rokhlin property and the approximate representability of an inclusion of unital $C\sp*$-algebras $P\subset A$ could be characterized by the existence of a map from $A$ to the sequence algebra of $P$ and a map from the $C\sp*$-basic construction to the sequence algebra of $A$ (see Proposition \ref{P:Rokhlinpropertyviamap} and Proposition \ref{P:approximaterepresentable}). Accordingly, we characterize the tracial version of the Rokhlin property and the approximate representability of the action of a finite abelian group $G$ on  a unital separable  $C\sp*$-algebra $A$ by the existence of a map from $C(G)$ the algebra of continuous functions on $G$ to the central sequence algebra of $A$ and a map from the crossed product algebra to the sequence algebra of $A$ as is expected.  Then we provide an alternate proof of the duality result of Phillips using these characterizations.  Moreover, we show an interplay between group actions and inclusions of $C\sp*$-algebras based on our duality results for both the strict case and the tracial case.

\section{Tracially sequentially-split homomorphism between $C\sp*$-algebras}
In this section we briefly review the definition of tracially sequentially-split map between separable $C\sp*$-algebras from \cite{LeeOsaka} and introduce notations which will be used throughout the note.
 
For a $C\sp*$-algebra $A$, we set the $C\sp*$-algebra of bounded sequence over $\mathbb{N}$ with values in $A$ and the ideal of sequences converging to zero as follows;
\[l^{\infty}(\mathbb{N}, A)=\{ (a_n)\mid  \{\|a_n\|\} \,\text{bounded} \}\]
\[c_0(\mathbb{N}, A)=\{(a_n)\mid \lim_{n\to \infty}\|a_n \|=0 \}. \]
Then we denote by $A_{\infty}=l^{\infty}(\mathbb{N}, A)/c_0(\mathbb{N}, A)$ the sequence algebra of $A$ with the norm  of $a$ given by $\limsup_n \|a_n\| $, where $(a_n)_n$ is a representing sequence of $a$. We can embed $A$ into $A_{\infty}$ as a constant sequence, and we denote the central sequence algebra of $A$ by 
\[A_{\infty} \cap A'.\]
For an automorphism of $\alpha$ on $A$, we also denote by $\alpha_{\infty}$ the induced automorphism on $A_{\infty}$ and $A_{\infty}\cap A'$ without confusion. 

We save the notation $\lesssim$ for the Cuntz subequivalence of two positive elements; for two positive elements $a, b$ in A  $a \lesssim b$ if there is a sequence $(x_n)$ in $A$ such that $\| x_nbx_n^*-a\| \to 0$ as $n\to \infty$. Often when $p$ is a projection, we see that $p \lesssim a$ if and only if  there is a projection in the hereditary $C\sp*$-subalgebra generated by $a$  which is Murray-von Neumann equivalent to $p$. For more details, we refer \cite{Cu, Ro:UHF1, Ro:UHF2} for example.

\begin{defn}
Let $A$ and $B$ be (unital) separable $C\sp*$-algebras. A $*$-homomorphism $\phi:A \to B$ is called tracially sequentially-split, if for every positive nonzero element $z \in A_{\infty}$  there exist a $*$-homomorphism $\psi: B \to A_{\infty}$ and a nonzero projection $g\in A_{\infty}\cap A'$ such that 
\begin{enumerate}
\item $\psi(\phi(a))=ag$ for each $a\in A$, 
\item $1_{A_{\infty}} -g$ is Murray-von Neumann equivalent to a projection in a hereditary $C\sp*$-subalgebra $\overline{zA_{\infty}z}$ in $A_{\infty}$.
\end{enumerate}
We also consider the following alternative stronger condition to replace (2) in the above definition;
for any $\delta >0$ there exist a $*$-homomorphism $\psi:B \to A_{\infty}$ and  a projection $g \in A_{\infty}\cap A'$ such that  
\begin{enumerate}
\item[(1)] $\psi(\phi(a))=ag$ for each $a\in A$,
\item[(2)']   $\tau(1-g)< \delta$ for all $\tau \in T(A_{\infty})$ the tracial states of $A_{\infty}$.
\end{enumerate}
\end{defn}

Since $(\psi\circ \phi)(a)=a-a(1_{A_{\infty}}-g)$, we can view $\psi\circ\phi$  equal to $\iota$ up to ``tracially small error''. If $A$ and $B$ are unital $C\sp*$-algebras and $\phi$ is unit preserving, then $g=\psi(1_B)$. Moreover, if $g=1_{A_{\infty}}$, then $\phi$ is called a (strictly) sequentially split $*$-homomorphism following Barlak and Szabo \cite{BS} since the second condition is automatic . The $\psi$ in the above definition is called a tracial approximate left-inverse of $\phi$. Although the diagram below is not commutative, we still use it to symbolize the definition of a tracially sequentially-split map $\phi$ and its tracial approximate left inverse $\psi$;

\begin{equation}\label{D:diagram}
\xymatrix{ A \ar[rd]_{\phi} \ar[rr]^{\iota} && A_{\infty} &\\
                          & B \ar[ur]_{\psi} \ar@{}[u]|{\text{tracially}\circlearrowleft} }
\end{equation}

\begin{defn}
A $C\sp*$-algebra $A$ has the property (SP) if any nonzero hereditary $C\sp*$-subalgebra of $A$ has a nonzero projection. 
\end{defn}
\begin{prop}
Let $\phi:A\to B$ be a tracially sequentially-split $*$-homomorphism. Then $A$ has the property (SP) or $\phi$ is a (strictly) sequentially split $*$-homomorphism.
\end{prop}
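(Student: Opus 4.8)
The plan is to establish the stated dichotomy in the form: if $A$ does \emph{not} have the property~(SP), then $\phi$ is a (strictly) sequentially split $*$-homomorphism. So suppose $A$ fails~(SP) and fix a nonzero hereditary $C^*$-subalgebra of $A$ with no nonzero projection; write it as $D:=\overline{aAa}$ for a suitable nonzero positive $a\in A$, and regard $a$ as a constant sequence in $A_\infty$. Applying the definition of a tracially sequentially-split map to the nonzero positive element $z:=a\in A_\infty$ yields a $*$-homomorphism $\psi\colon B\to A_\infty$ and a nonzero projection $g\in A_\infty\cap A'$ such that $\psi(\phi(x))=xg$ for all $x\in A$ and such that $1_{A_\infty}-g$ is Murray--von Neumann equivalent to a projection $q\in\overline{aA_\infty a}$. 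Since a projection that is Murray--von Neumann equivalent to $0$ must itself be $0$, the whole proof reduces to the following assertion: the hereditary $C^*$-subalgebra $\overline{aA_\infty a}$ of $A_\infty$ contains no nonzero projection. Granting this, $q=0$, so $g=1_{A_\infty}$, so $\psi\circ\phi=\iota$; that is, $\phi$ is sequentially split.

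To prove this assertion I would argue in two steps. \emph{Step 1: $\overline{aA_\infty a}\subseteq D_\infty$}, where $D_\infty:=l^\infty(\mathbb N,D)/c_0(\mathbb N,D)$ is viewed as a $C^*$-subalgebra of $A_\infty$ in the natural way. Pick an approximate unit $(e_k)_k$ of $D$ with $e_k\in C^*(a)$ (for instance $e_k=g_k(a)$ with $g_k$ continuous, vanishing at $0$, and $g_k\nearrow 1$ on the spectrum of $a$); the same sequence is then an approximate unit of $\overline{aA_\infty a}$, so any $x\in\overline{aA_\infty a}$ is the norm-limit of $e_kxe_k$. For a bounded representing sequence $(x_n)_n$ of $x$, the element $e_kxe_k$ is represented by $(e_kx_ne_k)_n$, and $e_kx_ne_k\in DAD\subseteq D$ because $D$ is hereditary and $e_k\in D$; thus $e_kxe_k\in D_\infty$, and since $D_\infty$ is closed we get $x\in D_\infty$. \emph{Step 2: $D_\infty$ has no nonzero projection.} Let $p\in D_\infty$ be a projection and let $(d_n)_n$ be a self-adjoint bounded representing sequence of $p$ in $l^\infty(\mathbb N,D)$. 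From $\|d_n^2-d_n\|\to 0$ the spectrum of $d_n$ eventually lies in a shrinking neighbourhood of $\{0,1\}$, so for a fixed continuous function $\chi$ with $\chi(0)=0$ and $\chi\equiv 1$ near $1$ the element $\chi(d_n)$ is, for all large $n$, an honest projection contained in $D$, and $\|\chi(d_n)-d_n\|\to 0$. As $D$ has no nonzero projection, $\chi(d_n)=0$ for $n$ large, whence $\|d_n\|=\|d_n-\chi(d_n)\|\to 0$ and $p=0$. Combining the two steps proves the assertion, and hence the proposition.

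The only non-routine ingredient is the assertion isolated in the first paragraph: since $A_\infty$ is much larger than $A$, one must rule out that the hereditary subalgebra generated by the constant sequence $a$ has acquired projections not already present in $D$. The containment $\overline{aA_\infty a}\subseteq D_\infty$ (which is in general proper) localises the problem to the sequence algebra of $D$, after which the standard functional-calculus perturbation of Step~2 does the rest. All remaining steps are a straightforward unwinding of the definition applied to the single test element $z=a$.
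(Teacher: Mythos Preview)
Your proof is correct and follows essentially the same approach as the paper: both argue that if $A$ fails (SP) then one can find a positive $z\in A_\infty$ whose hereditary subalgebra $\overline{zA_\infty z}$ is projectionless, which forces $1-g=0$ and hence $\psi\circ\phi=\iota$. The paper simply asserts in one line that ``$A$ has no property (SP) $\Rightarrow$ $A_\infty$ has no property (SP)'' and then picks any witnessing $x$, whereas you make the natural concrete choice $z=a$ (constant sequence) and carefully justify that $\overline{aA_\infty a}\subseteq D_\infty$ has no nonzero projection via functional calculus---this is precisely the work needed to substantiate the paper's unproved claim.
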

\begin{proof}
Suppose that $A$ has no property (SP), then $A_{\infty}$ has no property (SP). Then there is a positive nonzero element $x$ in $A_{\infty}$ which generates a hereditary subalgebra that contains no nonzero projections.
Then since $\phi:A \to B$ is tracially sequentially-split, there are a projection $g\in A_{\infty} \cap A'$ and a $*$-homomorphism $\psi:B\to A_{\infty}$ such that 
\begin{enumerate}
\item $\psi(\phi(a))=ag$ for all $a\in A$,
\item $1-g $ is Murray-von Neumann equivalent to a projection in $\overline{xA_{\infty}x}$.
\end{enumerate} 
The second condition implies $1-g=0$ so that $\psi(\phi(a))=a$, thus $\phi$ is strictly sequentially split. 
\end{proof}

The following proposition which is a tracial version of \cite[Lemma 2.4]{BS} will be used in Section \ref{S:tracial}. 
\begin{prop}\label{P:firstfactorembedding}
Let $C$ and $A$ be unital $C\sp*$-algebras.  Suppose that for any nonzero positive element $z\in A_{\infty}$ there exists a map $\phi: C \to A_{\infty}\cap A'$ such that 
 $1-\phi(1_C)$ is Murray-von Neumann equivalent to a projection in $\overline{zA_{\infty}z}$.
Then the first factor embedding $\id_A\otimes 1_C:A \to A\otimes C$ is tracially sequentially-split. Moreover, the converse is also true.
\end{prop}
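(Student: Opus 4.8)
The plan is to treat the two directions separately, using the map $\phi$ to build a tracial approximate left-inverse in one direction and restricting a given tracial approximate left-inverse to the second tensor factor in the other.

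For the forward direction, suppose we are given, for a fixed nonzero positive $z \in A_\infty$, a $*$-homomorphism $\phi\colon C \to A_\infty \cap A'$ with $1 - \phi(1_C)$ Murray--von Neumann equivalent to a projection in $\overline{z A_\infty z}$. First I would set $g = \phi(1_C)$, which is a projection in $A_\infty \cap A'$, and define $\psi\colon A \otimes C \to A_\infty$ by $\psi(a \otimes c) = a\,\phi(c)$ (note $\phi(c) = \phi(c)g$ since $g = \phi(1_C)$, so the range genuinely lands in the corner where this is multiplicative). Because $\phi$ has central range, $a$ and $\phi(c)$ commute, so this is a well-defined $*$-homomorphism on the algebraic tensor product; since $A \otimes C$ is the (unique, as $C$ is nuclear being finite-dimensional-ish — actually one does not even need nuclearity here because the map factors through commuting copies, but if needed invoke that $A\otimes C$ carries a single $C^*$-norm in the relevant cases) completion, $\psi$ extends. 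Then $\psi((\id_A \otimes 1_C)(a)) = \psi(a \otimes 1_C) = a\,\phi(1_C) = ag$, which is condition (1), and condition (2) is exactly the hypothesis on $1 - g = 1 - \phi(1_C)$. Since $z$ was arbitrary, $\id_A \otimes 1_C$ is tracially sequentially-split.

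For the converse, assume $\id_A \otimes 1_C$ is tracially sequentially-split. Fix a nonzero positive $z \in A_\infty$; by definition there are a $*$-homomorphism $\Psi\colon A \otimes C \to A_\infty$ and a projection $g \in A_\infty \cap A'$ with $\Psi(a \otimes 1_C) = ag$ for all $a$ and $1 - g$ equivalent to a projection in $\overline{z A_\infty z}$. Define $\phi\colon C \to A_\infty$ by $\phi(c) = \Psi(1_A \otimes c)$. Then $\phi(1_C) = \Psi(1_A \otimes 1_C) = 1_A g = g$, so $1 - \phi(1_C) = 1 - g$ has the desired comparison property. The point that needs care is showing $\phi(C) \subseteq A_\infty \cap A'$: for $a \in A$, $a\phi(c) = (ag)\phi(c) = \Psi(a \otimes 1_C)\Psi(1_A \otimes c) = \Psi(a \otimes c) = \Psi(1_A \otimes c)\Psi(a \otimes 1_C) = \phi(c)(ag) = \phi(c)a$, where the first and last equalities use $ag = \Psi(a\otimes 1_C)$ together with $g = \phi(1_C)$ acting as a unit on the range of $\phi$ (here one checks $\phi(c)g = \phi(c)$ from $\Psi(1_A\otimes c)\Psi(1_A\otimes 1_C) = \Psi(1_A \otimes c)$). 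Hence $\phi$ maps into the central sequence algebra, as required.

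The main obstacle, and the only genuinely non-formal point, is the bookkeeping around the projection $g$ acting as a local unit: in the forward direction one must confirm $\psi$ is multiplicative despite $\phi$ not being unital into all of $A_\infty$, and in the converse one must extract centrality from a relation that only holds after multiplying by $g$. Both are handled by the identity $\phi(c) = \phi(c)\phi(1_C)$, so I would state that as a preliminary observation and then the rest is a routine diagram chase; one should also remark that the $\delta$-variant of condition (2) transfers verbatim under the same constructions, so the proposition holds for both formulations of tracial sequential-splitness.
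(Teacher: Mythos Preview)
Your proof is correct and follows essentially the same route as the paper: in the forward direction the paper defines the same map $\psi(a\otimes c)=a\,\phi(c)$ (written there as the composite $m\circ(\id_A\otimes\phi)$ with $m$ the multiplication map from $A\otimes(A_\infty\cap A')$ into $A_\infty$), and in the converse it also sets $\phi(c)=\psi(1_A\otimes c)$ and verifies centrality by exactly the computation you give. Your parenthetical about nuclearity/finite-dimensionality of $C$ is unnecessary and a bit off (no such hypothesis is present), but the paper likewise does not dwell on the tensor-product completion issue and simply uses that $A$ and the image of $\phi$ commute inside $A_\infty$.
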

\begin{proof}
Let us denote by $m$ the map from $A \otimes (A_{\infty}\otimes A')$ to $A_{\infty}$ which is defined by 
\[ a\otimes [(a_n)] \to [(aa_n)].\] 
For any nonzero positive element $x$ in $A_{\infty}$  we take a map $\phi:C \to A_{\infty}$ as above and define $\psi$ as the composition of two maps $m$ and $\id_A \otimes \phi$, i.e., $\psi=m\circ (\id_A \otimes \phi)$. It follows immediately that $\psi(1_A \otimes 1_C)=\phi(1_C)$.
Then it is easily checked that
\begin{enumerate}
\item $(\psi \circ (\id_A\otimes 1_C)(a)=\psi(a\otimes 1_C)=a\phi(1_C)=a\psi(1_A\otimes 1_C)$ for any $a\in A$,
\item $1-\psi(1_A\otimes 1_C)=1-\phi(1_C)$ is Murray-von Neumann equivalent to a projection in $
\overline{xA_{\infty}x}$.
\end{enumerate}

Conversely,  for any positive nonzero element $x$ in $A_{\infty}$ we consider a tracial approximate inverse $\psi$ for $\id_A\otimes 1_C$. \\
Let $\phi(c)=\psi(1_A\otimes c)$. Obviously, $\phi$ is a $*$-homomorphism from $C$ to $A_{\infty}$. Moreover,  
\[\begin{split}
\phi(c)a&=\psi(1_A\otimes c)a=\psi(1_A\otimes c)\psi(1_A \otimes 1_C)a\\
&=\psi(1_A\otimes c )\psi(a\otimes 1_C)\\
&=\psi(a\otimes 1_C)\psi(1_A \otimes c)\\
&=a\psi(1_A\otimes 1_C)\psi(1_A\otimes c)\\
&=a\phi(c)
\end{split}\]
Therefore $\phi(C) \subset A_{\infty}\cap A'$. Finally, $1-\phi(1_C)=1-\psi(1_A\otimes 1_C)$ is Murray-von Neumann equivalent to a projection in $\overline{xA_{\infty}}x$. 
\end{proof}

When $A$ and $B$ are equipped with group actions,  instead of ordinary $*$-homomorphism we consider equivariant ones to define the equivariant version of a tracially sequentially-split map.

\begin{defn}
Let A and B be separable unital $C\sp*$-algebras and $G$ a finite group. Let $\alpha:G \curvearrowright 􏰏A $ and $\beta:G \curvearrowright 􏰏B$ be two actions. An equivariant $*$-homomorphism $\phi : (A,\alpha) \to (B,\beta)$ is called  $G$-tracially sequentially-split if for every nonzero positive element $x$ in $A_{\infty}$ there exist an equivariant $*$-homomorphism $\psi:(B,\beta) \to (A_{\infty}, \alpha_{\infty})$ and a projection $g$ in $A_{\infty}\cap A'$ such that 
\begin{enumerate}
\item $\psi (\phi (a))=ga=ag$,  
\item $1-g$ is Murray-von Neumann equivalent to a projection in $\overline{xA_{\infty}x}$.
\end{enumerate} 
\end{defn}
 We shall use the following diagram to describe the equivariant case of tracially sequentially-split map $\phi$ and its tracial approximate left inverse $\psi$; 
\begin{equation}\label{D:diagram}
\xymatrix{ (A, \alpha) \ar[rd]_{\phi} \ar@{-->}[rr]^{\iota} && (A_{\infty}, \alpha_{\infty}) &\\
                          & (B, \beta) \ar[ur]_{\psi} \ar@{}[u]|{\text{tracially}\circlearrowleft}} 
\end{equation}
The following is a straightforward generalization of Proposition \ref{P:firstfactorembedding} to the equivariant case. We remark that a group $G$ could be compact for this statement but we restrict ourselves to finite groups with applications in mind.  
\begin{prop}\label{P:firstfactorembeddingequivariant}
Let $C$ and $A$ be unital $C\sp*$-algebras.  Let $\alpha:G \curvearrowright A$ and $\beta:G \curvearrowright C$ be two action of a finite group $G$. Suppose that for any nonzero positive element $x\in A_{\infty}$ there exists an equivariant $*$-homomorphism $\phi: (C,\beta) \to (A_{\infty}\cap A', \alpha_{\infty})$ such that $1-\phi(1_C)$ is Murray-von Neumann equivalent to a projection in $\overline{zA_{\infty}z}$. Then the first factor embedding $\id_A\otimes 1_C:(A, \alpha) \to (A\otimes C, \alpha\otimes \beta)$ is tracially sequentially-split. Moreover, the converse is also true.
\end{prop}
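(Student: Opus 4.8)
The plan is to repeat the proof of Proposition~\ref{P:firstfactorembedding} almost verbatim, carrying the group actions along at every step; the only genuinely new point is verifying that the maps produced are equivariant, and this is immediate because $\alpha_{\infty}$ acts coordinatewise on representing sequences. So I would not reprove the non-equivariant content, but rather invoke it and check the extra equivariance.

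For the forward implication, fix a nonzero positive $x\in A_{\infty}$ and take the equivariant $\phi\colon (C,\beta)\to (A_{\infty}\cap A',\alpha_{\infty})$ supplied by the hypothesis. As before, let $m\colon A\otimes (A_{\infty}\cap A')\to A_{\infty}$ be the multiplication map $a\otimes [(a_n)]\mapsto [(aa_n)]$ and set $\psi=m\circ(\id_A\otimes\phi)\colon A\otimes C\to A_{\infty}$. Conditions (1) and (2) in the definition of a $G$-tracially sequentially-split map hold for exactly the same reasons as in Proposition~\ref{P:firstfactorembedding}: one has $\psi\circ(\id_A\otimes 1_C)(a)=a\phi(1_C)=ag$ with $g=\psi(1_A\otimes 1_C)=\phi(1_C)$, which is a projection in $A_{\infty}\cap A'$ by hypothesis, and $1-g=1-\phi(1_C)$ is Murray--von Neumann equivalent to a projection in $\overline{xA_{\infty}x}$. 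It remains to check that $\psi$ is $(\alpha\otimes\beta,\alpha_{\infty})$-equivariant. Since $\alpha_{\infty}[(a_n)]=[(\alpha(a_n))]$ and $\alpha_{\infty}$ is multiplicative, the map $m$ satisfies $m\circ(\alpha_g\otimes\alpha_{\infty,g})=\alpha_{\infty,g}\circ m$; combined with $\phi\circ\beta_g=\alpha_{\infty,g}\circ\phi$ this gives
\[\psi\circ(\alpha_g\otimes\beta_g)=m\circ(\alpha_g\otimes\alpha_{\infty,g})\circ(\id_A\otimes\phi)=\alpha_{\infty,g}\circ m\circ(\id_A\otimes\phi)=\alpha_{\infty,g}\circ\psi\]
for every $g\in G$, as desired.

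For the converse, given a nonzero positive $x\in A_{\infty}$, let $\psi\colon (A\otimes C,\alpha\otimes\beta)\to (A_{\infty},\alpha_{\infty})$ be an equivariant tracial approximate left inverse of $\id_A\otimes 1_C$, and put $\phi(c)=\psi(1_A\otimes c)$. The computation displayed in the proof of Proposition~\ref{P:firstfactorembedding} shows verbatim that $\phi$ is a unital $*$-homomorphism with $\phi(C)\subset A_{\infty}\cap A'$, and that $1-\phi(1_C)=1-\psi(1_A\otimes 1_C)$ is Murray--von Neumann equivalent to a projection in $\overline{xA_{\infty}x}$. Equivariance of $\phi$ is a one-line check: $\phi(\beta_g(c))=\psi(1_A\otimes\beta_g(c))=\psi\big((\alpha_g\otimes\beta_g)(1_A\otimes c)\big)=\alpha_{\infty,g}\big(\psi(1_A\otimes c)\big)=\alpha_{\infty,g}(\phi(c))$.

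There is no serious obstacle: once the non-equivariant statement is available, the equivariant version follows purely by tracking the actions. The only point needing (minor) care is that the multiplication map $m$ is equivariant for the diagonal action on $A\otimes(A_{\infty}\cap A')$, which holds precisely because $\alpha_{\infty}$ is defined on representatives coordinatewise and is a $*$-homomorphism; and the only bookkeeping to watch is that the central projection $g$ produced in the forward direction is literally $\phi(1_C)$, so that the Murray--von Neumann equivalence witnessing condition (2) is transported between the two formulations without modification.
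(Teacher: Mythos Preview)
Your proof is correct and follows essentially the same approach as the paper: both reduce immediately to Proposition~\ref{P:firstfactorembedding} and observe that the only additional point is the equivariance of the multiplication map $m$, which you verify explicitly. One tiny slip: in the converse you call $\phi$ a \emph{unital} $*$-homomorphism, but of course $\phi(1_C)=g$ need not be $1$ (that is precisely the point of the tracial condition); this is clearly just a wording lapse and does not affect the argument.
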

\begin{proof}
The proof is almost same, the only thing to be careful is that the map $m: (A_{\infty}\cap A')\otimes A \to A_{\infty}$ in Proposition \ref{P:firstfactorembedding} is equivariant with respect to actions and this is easily checked. 
\end{proof} 
\section{Dualities of actions and inclusions: The strict case}

\begin{defn}[Watatani\cite{W:index}]
Let $P\subset A$ be an inclusion of unital $C\sp*$-algebras and $E:A \to P$ a conditional expectation. Then we way that $E$ has a quasi-basis if there exist elements $\{(u_k,v_k)\}$ for $k=1,\dots, n$ such that for any $x\in A$
\[ x=\sum_{j=1}^nu_jE(v_jx)=\sum_{j=1}^n E(xu_j)v_j.\]
In this case, we define the Watatani index  of $E$ as 
\[\Index E= \sum_{j=1}^n u_jv_j.\] In other words, we say that $E$ has a finite index if there exist a quasi-basis. 
\end{defn}
It is proved that once we know the existence of a quasi-basis  then a quasi-basis can be chosen as $\{(u_1, u_1^*), \dots, (u_n, u_n^*)\}$ so that $\Index E$ is a nonzero positive element in $A$ commuting with $A$.  Thus if $A$ is simple,  it is a nonzero positive scalar.\\
We also recall Watatani's notion of the $C\sp*$-basic construction of the above triple $(P, A, E:A \to P)$; since we only consider the case that the conditional expectation $E:A  \to P$ is of  index-finite type , we do not distinguish the reduced construction and maximal one.    
\begin{defn}
Let $P\subset A$ be an inclusion of unital $C\sp*$-algebras and $E:A \to P$ a conditional expectation.
Now we assume $E$ is faithful. Let $\mc{E}_{E}$ be the Hilbert $P$-module completion of $A$ by the norm given by a $P$-valued Hermitian bilinear form $\langle x, y \rangle_P=E(x^*y)$ for $x,y \in A$. As usual $\mc{L}(\mc{E}_E)$ will be the algebra of  adjointable bounded operators on $\mc{E}_E$. There are an injective $*$-homomorphism $\lambda:A \to \mc{L}(\mc{E}_E)$ defined by a left multiplication and the natural inclusion map $\eta_E$ from $A$ to $\mc{E}_E$. 
The the \emph{Jones projection} $e_P$ is defined by 
\[ e_p(\eta_E(x))=\eta_E(E(x)).\]
Then the $C\sp*$-basic construction is the $C\sp*$-algebra given by 
\[C^* \langle A, e_P \rangle =\{\sum_{i=1}^n \lambda(x_i)e_P \lambda(y_i)\mid x_i, y_i \in A, n\in \mathbb{N} \}. \]
When $E$ is of index-finite type, there is a dual conditional expectation $\widehat{E}$ from $C^* \langle A, e_P \rangle $ onto $A$ such that for $x,y \in A$
\[ \widehat{E}(\lambda(x)e_p\lambda(y))=(\Index E)^{-1} xy. \] Moreover, $\widehat{E}$ is also of index-finite type and faithful.
\end{defn}
 From now on, otherwise stated, we only consider a faithful conditional expectation. 
   
\begin{defn}[Osaka, Kodaka, and Teruya]
A conditional expectation $E:A \to P$ of  index-finite type has the Rokhlin property if there is a projection $e\in A_{\infty}\cap A'$ such that $E_{\infty}(e)=(\Index E)^{-1}$ and the map $A \ni x \mapsto xe$ is injective. We call  $e$ a Rokhlin projection.  
\end{defn} 
\begin{defn}(Osaka and Teruya)\label{D:approximaterepresentability}
A conditional expectation $E:A \to P$  of  index-finite type is approximately representable if there exists a projection $e \in P_{\infty}\cap P'$ and a finite set of elements $\{u_i\}\subset A$ such that 
\begin{enumerate}
\item $exe=E(x)e$ for every $x\in A$, 
\item $\sum_iu_ieu_i^*=1$
\item the map $x \mapsto xe$ is injective for $x\in P$.  
\end{enumerate}
\end{defn}
\begin{rem}
The condition (2) in the above didn't appear in the original definition of  approximate representability  \cite{OKT:Rokhlin},  but we include it for the definition.   
\end{rem}

The following proposition gives us a characterization of the approximate representability for an inclusion of $C\sp*$-algebras reflecting the whole theme of our note. 
\begin{prop}\label{P:approximaterepresentable}
A conditional expectation $E:A \to P$ of index-finite type is approximately representable  if and only if there is a unital injective $*$-homomorphism $\psi:C^*\langle A, e_p \rangle \to A_{\infty}$ such that $\psi(e_p) \in P_{\infty}\cap P'$, $\psi(x)=x$ for every $x\in A$ 
\end{prop}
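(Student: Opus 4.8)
The plan is to treat the two directions separately, reading the data of Definition~\ref{D:approximaterepresentability} as being equivalent to the data of a unital injective $*$-homomorphism on the basic construction. For the forward direction, suppose $E:A\to P$ is approximately representable, so we are given a projection $e\in P_\infty\cap P'$ with $exe=E(x)e$ for all $x\in A$, elements $\{u_i\}\subset A$ with $\sum_i u_i e u_i^*=1$, and injectivity of $x\mapsto xe$ on $P$. The idea is to build $\psi:C^*\langle A,e_P\rangle\to A_\infty$ by sending $\lambda(x)\mapsto x$ (constant sequence) and $e_P\mapsto e$, and then verify this respects the defining relations of the basic construction. First I would check that $x\mapsto x$ and $e\mapsto e$ satisfy the universal relations that present $C^*\langle A,e_P\rangle$ over $A$: that $e$ is a projection, that $e x e=E(x)e$ (this is exactly hypothesis (1), which mirrors the relation $e_P\lambda(x)e_P=\lambda(E(x))e_P$), and that the $P$-linearity $e_P\lambda(p)=\lambda(p)e_P$ for $p\in P$ holds — here it holds because $e\in P_\infty\cap P'$ so $e$ commutes with all of $P\subset A_\infty$. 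Invoking the universal property of Watatani's basic construction (for index-finite type, where the reduced and maximal constructions agree, as the authors note), these relations yield a $*$-homomorphism $\psi:C^*\langle A,e_P\rangle\to A_\infty$ with $\psi(\lambda(x))=x$ and $\psi(e_P)=e$.

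It remains to show $\psi$ is unital and injective. Unitality: $\psi(1)=\psi(\lambda(1))=1_{A_\infty}$; alternatively, using the quasi-basis one writes $1=\sum_k \lambda(u_k)e_P\lambda(u_k^*)\cdot(\Index E)$ up to the normalization, and hypothesis (2) $\sum_i u_i e u_i^*=1$ is precisely the image of such an identity, which is a useful consistency check. Injectivity is the crucial point and, I expect, the main obstacle: $C^*\langle A,e_P\rangle$ is simple only in special cases, so injectivity must be extracted from the faithfulness of the dual expectation $\widehat E$ together with hypothesis (3). The strategy is: for $z\in C^*\langle A,e_P\rangle$ positive with $\psi(z)=0$, use $\widehat E(z)\in A$ and the formula $\widehat E(\lambda(x)e_P\lambda(y))=(\Index E)^{-1}xy$ to relate $\psi\circ(\text{something})$ to the map $p\mapsto pe$ on $P$ (or on $A$); since $\widehat E$ is faithful, it suffices to show $\widehat E(z^*z)=0$, and one reduces this to injectivity of $x\mapsto xe$. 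Concretely, for $z=\sum_i\lambda(x_i)e_P\lambda(y_i)$ one has $e_P z e_P = \lambda(E(\sum_i x_i)\cdots)e_P$ type identities; applying $\psi$ and using $e\in P_\infty\cap P'$ and $exe=E(x)e$ lets one compute $e\,\psi(z)\,e$ and then cut down to $P$, where hypothesis (3) gives injectivity, and then propagate back using the quasi-basis expansion $z=\sum_k \lambda(u_k)\,\widehat E(\lambda(u_k^*)z)\,(\Index E)$ (the analogue of the quasi-basis identity for $\widehat E$). I would want to be careful that the $u_k$ live in $A$ and that $\widehat E$ intertwines appropriately under $\psi$, i.e. that $\psi$ is compatible with $\widehat E:C^*\langle A,e_P\rangle\to A$ and the identity $A\hookrightarrow A_\infty$.

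For the converse, suppose $\psi:C^*\langle A,e_P\rangle\to A_\infty$ is unital, injective, $\psi|_A=\iota$, and $\psi(e_P)\in P_\infty\cap P'$. Set $e:=\psi(e_P)$; it is a projection since $e_P$ is. From the basic-construction relation $e_P\lambda(x)e_P=\lambda(E(x))e_P$, applying $\psi$ gives $e x e = E(x) e$ for $x\in A$, which is (1). For (2), expand $1_{C^*\langle A,e_P\rangle}$ using a quasi-basis $\{(u_k,u_k^*)\}$: one has $1=(\Index E)\sum_k \lambda(u_k)e_P\lambda(u_k^*)$ (with the Watatani normalization built into $\widehat E$), and applying $\psi$, together with $\Index E$ being a positive scalar when relevant (or a central element handled by the quasi-basis formalism), yields $\sum_i u_i e u_i^*=1$ after absorbing constants into the $u_i$; this is where I would double-check the exact placement of $\Index E$. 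For (3), injectivity of $p\mapsto pe$ on $P$: if $pe=0$ with $p\in P$, then $\psi(\lambda(p)e_P)=pe=0$, so by injectivity of $\psi$ we get $\lambda(p)e_P=0$ in $C^*\langle A,e_P\rangle$; applying $\widehat E$ gives $(\Index E)^{-1}p=0$, hence $p=0$. This direction is essentially formal once the normalization bookkeeping is settled, so the real content — and the step I expect to fight with — is the injectivity argument in the forward direction, where faithfulness of $\widehat E$ must be leveraged against the weak injectivity hypothesis (3) via the quasi-basis.
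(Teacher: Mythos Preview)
Your approach is essentially the same as the paper's: build $\psi$ from the covariant pair $(\iota,e)$, then verify injectivity and unitality; for the converse, read off conditions (1)--(3) by applying $\psi$ to the defining relations of the basic construction. Two points are worth noting.

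First, the injectivity step you flag as the hard part is in fact immediate from the result you already invoke. Watatani's \cite[Proposition~2.2.11]{W:index}, which produces the map $\psi:C^*\langle A,e_P\rangle\to\overline{AeA}$ from a nondegenerate covariant pair, also yields that $\psi$ is an isomorphism precisely when the map $P\ni p\mapsto pe$ is injective---exactly hypothesis~(3). So the $\widehat E$-based argument you sketch is unnecessary; the paper simply writes ``condition~(3) implies that $\psi$ is injective'' and moves on. The nondegeneracy needed to apply Watatani's proposition is condition~(2), $\sum_i u_i e u_i^*=1$, which you mention only as a ``consistency check'' but which is in fact the hypothesis that makes the construction go.

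Second, your unitality argument ``$\psi(1)=\psi(\lambda(1))=1_{A_\infty}$'' is a bit quick: a priori $\psi(\lambda(1))=\sum_j v_j e v_j^*$ for a quasi-basis $\{v_j\}$, and one must argue this equals $1$. The paper handles this by first using the (already established) injectivity of $\psi$ to show that the elements $\{u_i\}$ from condition~(2) themselves form a quasi-basis for $E$: from $ae=\sum_i u_i E(u_i^*a)e$ one gets $\psi(ae_P)=\psi(\sum_i u_i E(u_i^*a)e_P)$, hence $a=\sum_i u_i E(u_i^*a)$. Then $\sum_i u_i e_P u_i^*=1$ in $C^*\langle A,e_P\rangle$ and $\psi$ of this is $1$ by condition~(2). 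This also corrects your normalization in the converse: the identity is $\sum_k u_k e_P u_k^*=1$, with no $\Index E$ factor.
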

\begin{proof}
Suppose that $E:A\to P$ is approximately representable. We note that condition (2) implies that the $C\sp*$-algebra generated by $\{xey\mid x, y \in A\}$ is nondegenerate. It follows that $(\iota, e)$ is a nondegenerate covariant representation where $\iota$ is the natural embedding of $A$ into $A_{\infty}$ by a constant sequence.  By \cite[Proposition 2.2.11]{W:index},  we have a map $\psi$ from the $C\sp*$-basis construction $C^*\langle A, e_p \rangle$ to $\overline{AeA}$ such that $\psi(xe_py)=xey$ for all $x, y \in A$.  Moreover the condition (3) implies that $\psi$ is injective. Note that 
\[\begin{split}
ae=&\sum_i u_ieu_i^*ae\\
=&\sum_i u_i E(u_i^*a)e
\end{split}\]
Then we have $\psi(ae_p)=\psi(\sum_i u_iE(u_i^*a)e_p)$. By the injectivity of $\psi$, $ae_p=\sum_i u_iE(u_i^*a)e_p$. It follows that $a=\sum_i u_iE(u_i^*a)$. Similarly, we have $a=\sum_i E(au_i)u_i^*$.
Thus $\{(u_i, u_i^*)\}$ is a quasi-basis, and $\psi$ is a unital map. 

 Conversely, if we have a unital injective map $\psi:C\langle A, e_p\rangle \to A_{\infty}$ such that $\psi(e_p) \in P_{\infty}\cap P$ and $\psi(a)=a$ for all $a\in A$, then from $e_pae_p=E(a)e_p$ for all $a\in A$ we also have $eae=E(a)e$ for all $a\in A$.  In addition,  $pe=0$ implies that $\psi(pe_p)=0$. Thus $pe_p=0$ by the injectivity of $\psi$. 
If $\{(u_i,u_i^*)\}$ is a quasi-basis for $E$, then $\sum_i u_i e_p u_i^*=1$ in $C^*\langle A, e_p \rangle$. 
Thus\[  \psi(\sum_i u_i e_p u_i^*)=\sum_i u_i e u_i^*=1.\] 
\end{proof}
\begin{rem}
If a conditional expectation $E:A \to P$ of index-finite type is approximately representable, and $\{u_i\} \subset A$ is the set satisfying $\sum_i u_ie u_i^*=1$, then $\{(u_i,u_i^*)\}$ is a quasi-basis for $E$. 
\end{rem}
\begin{cor}
If $\eta:A \to C^*\langle A, e_p \rangle$ the natural embedding for the inclusion $P\subset A$ of index-finite type, and the conditional expectation $E:A \to P$ is approximately representable, then $\eta$ is a sequentially split $*$-homomorphism. 
\end{cor}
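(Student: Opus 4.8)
The plan is to deduce this immediately from Proposition \ref{P:approximaterepresentable} together with the observation that a sequentially split $*$-homomorphism is exactly the $g = 1_{A_\infty}$ case of the tracially sequentially-split definition, applied to the \emph{strict} (non-tracial) sequence algebra. Concretely, the claim is that $\eta\colon A \to C^*\langle A, e_P\rangle$ admits a left inverse $\psi\colon C^*\langle A, e_P\rangle \to A_\infty$ in the sense of \cite{BS}, i.e.\ a $*$-homomorphism with $\psi \circ \eta = \iota$, where $\iota\colon A \to A_\infty$ is the constant-sequence embedding.

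First I would invoke Proposition \ref{P:approximaterepresentable}: since $E$ is approximately representable, there is a unital injective $*$-homomorphism $\psi\colon C^*\langle A, e_P\rangle \to A_\infty$ with $\psi(e_P) \in P_\infty \cap P'$ and $\psi(x) = x$ for every $x \in A$. The last condition is precisely $\psi \circ \eta = \iota$, since $\eta$ is the inclusion of $A$ into the basic construction. Thus $\psi$ is a left inverse for $\eta$, and because $\psi$ takes values in $A_\infty$ and is an honest $*$-homomorphism (no projection $g \neq 1$, no tracial error), $\eta$ is sequentially split in the strict sense of Barlak--Szab\'o.

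The one point worth spelling out is that $\eta$ is genuinely a $*$-homomorphism into $C^*\langle A, e_P\rangle$ and that $C^*\langle A, e_P\rangle$ is separable and unital, so that the definition of sequentially split applies verbatim; this is part of Watatani's setup for index-finite type inclusions, where $\lambda\colon A \to \mc{L}(\mc{E}_E)$ is the injective left-multiplication representation and $1 = \sum_i u_i e_P u_i^*$ exhibits unitality. One could also remark that unitality of $\psi$ is automatic here since $\eta$ is unital and $\psi \circ \eta = \iota$ forces $\psi(1) = 1$.

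There is no real obstacle; the corollary is a direct translation of the already-established Proposition \ref{P:approximaterepresentable} into the language of \cite{BS}. If anything, the only thing to be careful about is notational: making sure the reader sees that ``$\psi(x) = x$ for all $x \in A$'' is the same as ``$\psi \circ \eta = \iota_A$'', and that the absence of a proper subprojection $g$ is what upgrades ``tracially sequentially split'' to ``sequentially split.''

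\begin{proof}
By Proposition \ref{P:approximaterepresentable}, approximate representability of $E$ yields a unital injective $*$-homomorphism $\psi\colon C^*\langle A, e_P \rangle \to A_\infty$ with $\psi(x) = x$ for every $x \in A$. Since $\eta\colon A \to C^*\langle A, e_P\rangle$ is the canonical (unital) inclusion, the identity $\psi(x) = x$ for all $x \in A$ says exactly that $\psi \circ \eta = \iota$, where $\iota\colon A \to A_\infty$ is the embedding as constant sequences. Thus $\psi$ is a left inverse of $\eta$ landing in $A_\infty$; as $\psi$ is a genuine $*$-homomorphism (equivalently, the projection $g$ in the definition of tracially sequentially-split equals $1_{A_\infty}$), $\eta$ is a sequentially split $*$-homomorphism in the sense of Barlak and Szab\'o \cite{BS}.
\end{proof}
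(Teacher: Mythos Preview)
Your proof is correct and matches the paper's intended argument: the corollary is stated immediately after Proposition \ref{P:approximaterepresentable} with no separate proof, precisely because it is the direct translation you give --- the map $\psi$ from that proposition satisfies $\psi\circ\eta=\iota$, which is exactly the definition of $\eta$ being sequentially split in the sense of \cite{BS}. There is nothing to add.
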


Similarly we characterize an inclusion $P\subset A$ with the Rokhlin property using a map from $A$ to $P_{\infty}$ in the same spirit of Proposition \ref{P:approximaterepresentable}. 
\begin{prop}\label{P:Rokhlinpropertyviamap}
Let $P\subset A$ be an inclusion of unital $C\sp*$-algebras and $E:A\to P$ be a conditional expectation of  index-finite type.  Suppose further $A$ is simple. Then $E$ has the Rokhlin property if and only if  there exist  a  map $\beta:A \to P_{\infty}$ and a projection $e\in A_{\infty}\cap A'$ such that
\begin{enumerate}
\item $ae=\beta(a)e$ for all $a\in A$,
\item $(\Index E)ee_pe=e$,
\item $ye=ze$ implies that $y=z$ for all $y,z\in P_{\infty}$,
\end{enumerate} 
 \end{prop}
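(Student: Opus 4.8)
The plan is to mirror the structure of Proposition \ref{P:approximaterepresentable}, but now with the roles of $A$ and $P$ exchanged via the basic construction, using the dual conditional expectation $\widehat{E}:C^*\langle A,e_p\rangle \to A$. First I would unwind what the Rokhlin projection gives us: a projection $e\in A_\infty\cap A'$ with $E_\infty(e)=(\Index E)^{-1}$ and $A\ni x\mapsto xe$ injective. The natural candidate for $\beta$ should come from the observation that, because $E_\infty(e) = (\Index E)^{-1}$ is central, the element $ae$ ought to be ``captured'' inside $P_\infty$ up to the $e$-twist; concretely I would try $\beta(a) = (\Index E)\, E_\infty(ae\cdot)$ suitably interpreted, or more robustly obtain $\beta$ by applying Watatani's universal property of the basic construction to the covariant pair $(\iota_P, e)$ and composing with $\widehat{E}_\infty$. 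The key algebraic identity to exploit is that in $C^*\langle A,e_p\rangle$ one has $(\Index E)\,e_p x e_p = e_p x$ for $x\in A$ and $\widehat{E}(\lambda(x)e_p\lambda(y)) = (\Index E)^{-1}xy$; transporting these through a $*$-homomorphism sending $e_p\mapsto e$ yields condition (2), namely $(\Index E)\,e e_p e = e$ once we have a map on the basic construction at all.

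For the forward direction, the main steps are: (i) verify that $(\iota,e)$ with $\iota:A\to A_\infty$ the constant embedding is a covariant representation of the basic construction data, i.e.\ $e x e = E(x)e$ for $x\in A$ — this needs to be derived from $E_\infty(e)=(\Index E)^{-1}$ together with centrality of $e$ and the quasi-basis expansion, essentially the standard computation $exe = \sum_i exu_i e_p u_i^* e$-type manipulation carried out in $A_\infty$; (ii) invoke \cite[Proposition 2.2.11]{W:index} (or the analogous universality statement) to get $\widetilde\psi:C^*\langle A,e_p\rangle \to A_\infty$ with $\widetilde\psi(x)=x$, $\widetilde\psi(e_p)=e$, noting injectivity fails in general here — but we only need a map, not an injective one, which is why the statement of this proposition is formulated with $\beta$ landing in $P_\infty$ rather than asserting injectivity of $\widetilde\psi$; (iii) set $\beta = \widehat{E}_\infty\circ\widetilde\psi|_A$; wait — $\widehat E$ maps into $A$, not $P$, so instead I would use that $E_\infty\circ\widetilde\psi$ composed appropriately, or directly define $\beta(a)$ as the element of $P_\infty$ with $\beta(a)e = ae$, whose existence and uniqueness is exactly what must be proven using the Rokhlin relations. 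Condition (3), injectivity of $P_\infty\ni y\mapsto ye$, should follow because $A$ is simple and $E$ faithful of finite index, so $P$ is simple too and one can run a Cuntz-type argument: if $ye=0$ with $y\ge 0$ then $E_\infty(ye) = 0$ while $y$ and $e$ nearly commute and $E_\infty(e)$ is invertible, forcing $y=0$.

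For the converse, I would start from $\beta$ and $e$ satisfying (1)–(3) and produce a Rokhlin projection — which is just $e$ itself — so the real content is showing $E_\infty(e) = (\Index E)^{-1}$ and that $A\ni x\mapsto xe$ is injective. Injectivity on $A$: if $ae = 0$ then $\beta(a)e = 0$, but more directly $ae=0$ with $a\in A\subset P_\infty$? No — $a$ need not lie in $P$. Instead, from $(\Index E)\,ee_pe = e$ and $e$ a projection commuting with $A$, apply $E_\infty$ (extended to the basic construction via $\widehat E$ or use $\widehat E(e_p) = (\Index E)^{-1}$) to get $E_\infty(e) = (\Index E)\,E_\infty(ee_pe) = (\Index E)\,\widehat E_\infty(e_p) E_\infty(e) \cdot(\cdots)$ — I will need to be careful about which expectation acts where; the clean route is: $e_p$ satisfies $\widehat E(e_p) = (\Index E)^{-1}$ in $C^*\langle A, e_p\rangle$, and the map $x e_p y \mapsto x e y$ intertwines $\widehat E$ with $E_\infty$ on the relevant subalgebra, giving $E_\infty(e) = (\Index E)^{-1}$ after cancelling using (2). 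For injectivity of $x\mapsto xe$ on $A$: suppose $a^*a\, e = 0$; since $e$ commutes with $A$, $ae = 0$, hence $e a^* a e = 0$; but $ea^*ae = E(a^*a)e$ by the $e_p$-relation transported through the map (which we get from (2)), and $E(a^*a)e = 0$ with $E(a^*a)\in P$ positive and $y\mapsto ye$ injective on $P_\infty\supset P$ forces $E(a^*a) = 0$, hence $a=0$ by faithfulness of $E$.

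\textbf{Main obstacle.} The delicate point I expect to fight with is the precise construction and well-definedness of the map $\beta:A\to P_\infty$ in the forward direction — in particular showing $ae$ genuinely equals $\beta(a)e$ for a bona fide $*$-homomorphism $\beta$ valued in $P_\infty\cap$ (appropriate relative commutant), as opposed to merely in $A_\infty$. This is where the simplicity of $A$ and the finite-index structure must be used in an essential way, presumably by expressing $\beta(a) = \sum_i E_\infty(\cdot)$-type formulas via the quasi-basis $\{(u_i,u_i^*)\}$ and checking multiplicativity modulo the ideal killed by right-multiplication by $e$; the injectivity statements (3) and the analogous one on $A$ are then what let us conclude honest equalities from equalities-after-$(\cdot)e$. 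Everything else is bookkeeping with Watatani's basic construction identities.
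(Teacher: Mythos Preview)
Your candidate formula $\beta(a) = (\Index E)\,E_\infty(ae)$ is exactly what the paper uses, and the proof is far shorter than you anticipate; but the route you try to justify it by --- building a covariant pair $(\iota,e)$ for the basic construction --- has a genuine error. The Rokhlin projection $e$ lies in $A_\infty\cap A'$, so it \emph{commutes} with $A$; hence $exe = xe$, not $E(x)e$, and $(\iota,e)$ is \emph{not} covariant. No $*$-homomorphism $C^*\langle A,e_p\rangle\to A_\infty$ fixing $A$ and sending $e_p\mapsto e$ exists (if it did, $xe=E(x)e$ would force $x=E(x)$ for all $x\in A$ by the very injectivity you want). Condition (2) is a relation in $B_\infty = C^*\langle A,e_p\rangle_\infty$ between the \emph{two distinct} embedded projections $e\in A_\infty\subset B_\infty$ and $e_p\in B\subset B_\infty$, not the image of a Jones relation under a map. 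It is obtained from $(\Index E)E_\infty(e)=1$ by checking that $(\Index E)ee_pe$ is a projection dominated by $e$ with the same $\widehat E_\infty$-image as $e$, and invoking faithfulness of $\widehat E_\infty$.

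Once (2) holds, the single chain
\[
xe \;=\; (\Index E)\,\widehat E_\infty(e_p\,xe)
\;=\; (\Index E)^2\,\widehat E_\infty\bigl(e_p(xe)e_p\,e\bigr)
\;=\; (\Index E)^2\,\widehat E_\infty\bigl(E_\infty(xe)e_p\,e\bigr)
\;=\; (\Index E)\,E_\infty(xe)\,e
\]
gives (1) with $\beta(a):=(\Index E)E_\infty(ae)$, and (3) is immediate: if $ye=ze$ for $y,z\in P_\infty$, apply $(\Index E)E_\infty$ and use that $y,z$ pull through $E_\infty$ to get $y=z$. For the converse the paper observes that the displayed chain uses only (2), so $ae=(\Index E)E_\infty(ae)e$ holds automatically; then (1) and (3) force $\beta(a)=(\Index E)E_\infty(ae)$, and since $1\cdot e=\beta(1)e$, condition (3) gives $\beta(1)=1$, i.e.\ $(\Index E)E_\infty(e)=1$. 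Injectivity of $A\ni x\mapsto xe$ is free from simplicity of $A$, no $ea^*ae=E(a^*a)e$ argument needed.

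Your ``main obstacle'' evaporates: the statement asks only for a map, and in any case multiplicativity is automatic from (1) and (3) via $\beta(a)\beta(b)e=\beta(a)\,be=abe=\beta(ab)e$.
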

\begin{proof}
For ``only if'' part, see \cite[ Theorem 5.7]{OKT:Rokhlin}. In fact, take $e$ a Rokhlin projection and it follows that $(2)$ is true with the fact that $(\Index E)E_{\infty}(e)=1$, and for $x\in A_{\infty}$
\[\begin{split}
xe&=(\Index E)\widehat{E}_{\infty}(e_pxe)\\
   &=(\Index E)^2\widehat{E}_{\infty}(e_p x e e_pe)\\
   &=(\Index E)^2 \widehat{E}_{\infty}(E_{\infty}(xe)e_p e)\\
  &=(\Index E )E_{\infty}(xe)e. 
\end{split}
\]
Thus we define $\beta(x)=(\Index E) E_{\infty}(xe)$. Finally, 
so that if $y$ and $z$ satisfy $ye=ze$, then 
\[\begin{split}
y=&y(\Index E)E_{\infty}(e)=(\Index E)E_{\infty}(ye)\\
=&(\Index E)E_{\infty}(ze)\\
=&z(\Index E)E_{\infty}(e)=z
\end{split}\]  

Conversely, suppose that we have a map $\beta$ and a projection $e$ satisfying (1)-(3). Then (2) implies  that $ae=(\Index E) E_{\infty}(ae)e$ as shown above. Then the conditions (1), (3) imply that $\beta(a)=(\Index E) E_{\infty}(ae)$. Then (1) and (3) again imply that $\beta(1)=1$ so that $(\Index E)E_{\infty}(e)=1$. Thus $e$ is a Rokhlin projection and the inclusion $P \subset A$ has the Rokhlin property. 
\end{proof}

Whenever we have a finite group action $\alpha:G \curvearrowright A$, then we have an inclusion $A^{\alpha}  \subset A$ where $A^{\alpha}$ is the fixed point algebra with a faithful conditional expectation $E:A \to A^{\alpha}$ defined by $\displaystyle E(a)=\frac{1}{|G|} \sum_{g} \alpha_g(a)$.  It is not always true that $E$ is of  index-finite type. But the following is known.  
\begin{thm} \cite[Theorem 4.1]{JP:saturated} \label{T:saturated}
Let $\alpha:G \curvearrowright  A$ be an action of a finite group $G$ on $A$. Then 
$\alpha$ is saturated if and only of  $E:A \to A^{\alpha}$ defined by $\displaystyle E(a)=\frac{1}{|G|} \sum_{g} \alpha_g(a)$is of  index-finite type.
\end{thm}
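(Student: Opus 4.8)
The plan is to move the whole question into the crossed product $B:=A\rtimes_\alpha G$ and to exploit its \emph{averaging projection} $p:=\tfrac1{|G|}\sum_{g\in G}u_g$, which satisfies $p=p^*=p^2$, $u_g p=p=pu_g$ for all $g$, and $pap=E(a)p$ for $a\in A$; consequently $pBp=A^\alpha p\cong A^\alpha$, and the canonical bimodule over $A^\alpha$ and $B$ attached to $(A,G,\alpha)$ is (isomorphic to) $pB=pA$, with left inner product $\langle pa,pb\rangle_{A^\alpha}=E(ab^*)$ and right inner product $\langle pa,pb\rangle_{B}=a^*pb$. Recall that $\alpha$ is saturated precisely when this is an imprimitivity bimodule; since $\langle pA,pA\rangle_{A^\alpha}$ already contains $E(1)=1_{A^\alpha}$, the left inner product is automatically full, so saturation reduces to fullness of the right inner product, i.e. to $p$ being a \emph{full projection}: $\overline{BpB}=B$. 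Because $B$ is unital, this is the same as the existence of a \emph{finite} sum $1_B=\sum_{j=1}^m c_j p d_j$ (if $\|1_B-\sum_j c_j p d_j\|<1$ the partial sum is invertible and one absorbs the inverse).

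First I would prove ``$\alpha$ saturated $\Rightarrow$ $E$ of index-finite type''. Write the canonical Fourier expansion $x=\sum_g x^{(g)}u_g$ in $B$. Given $1_B=\sum_j c_j p d_j$, the identities $u_g p=pu_g=p$ let me rewrite $c_j p=a_j p$ and $p d_j=p b_j$ with $a_j:=\sum_g c_j^{(g)}\in A$ and $b_j:=\sum_g\alpha_{g^{-1}}\!\big(d_j^{(g)}\big)\in A$, so that $1_B=\sum_j a_j p b_j=\tfrac1{|G|}\sum_g\big(\sum_j a_j\alpha_g(b_j)\big)u_g$. Comparing coefficients of the $u_g$ in $B=\bigoplus_g Au_g$ gives $\sum_j a_j b_j=|G|$ and $\sum_j a_j\alpha_g(b_j)=0$ for $g\neq e$; substituting into $E$ then yields $\sum_j a_j E(b_j x)=\tfrac1{|G|}\sum_g\big(\sum_j a_j\alpha_g(b_j)\big)\alpha_g(x)=x$ for every $x\in A$. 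A standard argument using faithfulness of $E$ (cf. Watatani \cite{W:index}) upgrades this one-sided relation to the full quasi-basis condition, so $\{(a_j,b_j)\}$ is a quasi-basis and $E$ has finite index.

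For the converse, ``$E$ of index-finite type $\Rightarrow$ $\alpha$ saturated'', I would fix a quasi-basis, which by Watatani may be taken in the symmetric form $\{(w_i,w_i^*)\}$, and try to show $p$ is full, the natural candidate being $\sum_i w_i p w_i^*=1_B$. To pin this down I would route through the $C^*$-basic construction: left multiplication $\lambda$ of $A$ on $\mathcal{E}_E$ together with the unitaries $U_g$ on $\mathcal{E}_E$ induced by $\alpha_g$ (unitary because $E\circ\alpha_g=E$) form a covariant representation of $(A,G,\alpha)$, hence give a $*$-homomorphism $\widetilde\Phi\colon B\to\mathcal{L}(\mathcal{E}_E)$ with $\widetilde\Phi|_A=\lambda$ and $\widetilde\Phi(p)=e_{A^\alpha}$, since $e_{A^\alpha}\eta_E(x)=\eta_E(E(x))=\tfrac1{|G|}\sum_g U_g\,\eta_E(x)$. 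Because $E$ has finite index, $\mathcal{E}_E$ is a finitely generated projective Hilbert $A^\alpha$-module, $C^*\langle A,e_{A^\alpha}\rangle=\mathcal{L}(\mathcal{E}_E)$ is unital with unit $\sum_i\lambda(w_i)e_{A^\alpha}\lambda(w_i^*)=\lambda(1_A)$, and $\widetilde\Phi$ maps onto it; the finite-index hypothesis is then used to conclude that $\widetilde\Phi$ is in fact an isomorphism $B\cong C^*\langle A,e_{A^\alpha}\rangle$ (this isomorphism is itself one of the standard reformulations of saturation). Pulling the unit back through $\widetilde\Phi^{-1}$ gives $\sum_i w_i p w_i^*=1_B$, so $p$ is full and $\alpha$ is saturated.

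The hard part is this converse — above all the assertion that $\widetilde\Phi$ is an isomorphism, equivalently that $\sum_i w_i p w_i^*=1_B$. The quasi-basis relation by itself only delivers $x=\tfrac1{|G|}\sum_g\big(\sum_i w_i\alpha_g(w_i^*)\big)\alpha_g(x)$ for all $x\in A$, which does not pin down the individual group-components $\sum_i w_i\alpha_g(w_i^*)$; genuinely controlling them is where the module structure of $pB$ and the finite-index hypothesis must be used crucially (equivalently, a direct analysis via the faithful canonical expectation $B\to A$), and this is the part carried out in detail in \cite{JP:saturated}, whose treatment of the canonical bimodule I would follow. The remaining ingredients — the elementary algebra of the averaging projection, the identification of the canonical bimodule with $pB$, and Watatani's passage from a one-sided quasi-basis to a two-sided one — are standard, and I would only cite them.
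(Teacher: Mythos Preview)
The paper does not supply its own proof of this theorem: it is quoted verbatim as \cite[Theorem~4.1]{JP:saturated} and used as a black box. So there is nothing in the present paper to compare your argument against; any comparison is with the Jeong--Park reference, not with this paper.

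On the substance of your sketch: the forward direction (saturated $\Rightarrow$ index-finite) is essentially complete. Your manipulation of the averaging projection $p=\frac{1}{|G|}\sum_g u_g$ is correct, the rewriting $c_jp=a_jp$, $pd_j=pb_j$ with $a_j=\sum_g c_j^{(g)}$, $b_j=\sum_g\alpha_{g^{-1}}(d_j^{(g)})$ is valid (the second uses $p\,a\,u_g=p\,\alpha_{g^{-1}}(a)$ for $a\in A$), and the Fourier-coefficient comparison genuinely yields the one-sided quasi-basis identity $\sum_j a_jE(b_jx)=x$. The upgrade to a two-sided quasi-basis via faithfulness of $E$ is standard Watatani.

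The converse, however, is not a proof but a restatement of the problem. You construct $\widetilde\Phi:B\to\mathcal{L}(\mathcal{E}_E)$ and observe it is surjective onto $C^*\langle A,e_{A^\alpha}\rangle$; the entire content of the implication lies in \emph{injectivity} of $\widetilde\Phi$, and you do not supply an argument for it --- you explicitly defer this to \cite{JP:saturated}. Your parenthetical remark that ``this isomorphism is itself one of the standard reformulations of saturation'' makes the circularity risk plain: one must show finite index forces injectivity \emph{without} invoking saturation. Concretely, what must be shown (and what you correctly flag as the hard part) is that the quasi-basis relations force $\sum_i w_i\alpha_g(w_i^*)=0$ for $g\neq e$ and $\sum_i w_iw_i^*=|G|$, so that $\sum_i w_ipw_i^*=1_B$; the quasi-basis identity alone only gives $\frac{1}{|G|}\sum_g\big(\sum_i w_i\alpha_g(w_i^*)\big)\alpha_g(x)=x$, which does not separate the individual $g$-components. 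That separation is precisely what Jeong--Park establish, and your proposal does not reproduce it.
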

\begin{defn}[M. Izumi]
Let $\alpha:G \curvearrowright  A$ be an action of a finite group $G$ on $A$. We say  $\alpha$ has the Rokhlin property if there is a partition of unity $\{e_g\}_{g\in G}$ of projections in $A_{\infty}\cap A'$ such that for all $g,h\in G$
\[\alpha_{\infty, h}(e_g)=e_{hg}\]
where $\alpha_{\infty}$ is the induced action. 
\end{defn}
If $\alpha$ has the Rokhlin property, then $\alpha$ is outer, thus saturated.  By Theorem \ref{T:saturated} $E:A \to A^{\alpha}$ defined by $\displaystyle E(a)=\frac{1}{|G|} \sum_{g} \alpha_g(a)$ is of finite index. 
Then the following is a characterization of  the Rokhlin property of an action in term of inclusion of $C\sp*$-algebras due to Osaka, Kodaka, and Teruya. 

\begin{prop} \cite[Proposition 3.2]{OKT:Rokhlin}\label{P:ERokhlin}
Let $G$ be a finite group,  $\alpha:G \curvearrowright  A$ be an action of a finite group $G$ on a simple unital $C\sp*$-algebra $A$, and $E$ the conditional expectation defined by $\displaystyle E(a)=\frac{1}{|G|} \sum_{g} \alpha_g(a)$. Then $\alpha$ has the Rokhlin property if and only if $E$ has the Rokhlin property.
\end{prop}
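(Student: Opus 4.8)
The plan is to prove both implications by passing between a single Rokhlin projection for $E$ and the full Rokhlin partition of unity for $\alpha$ via the induced action $\alpha_\infty$ on $A_\infty\cap A'$. Throughout I would use that a Rokhlin action is outer, so $\alpha$ is saturated and $E$ is of index-finite type by Theorem \ref{T:saturated}, and that for the canonical expectation of such an action the Watatani index is the positive scalar $\Index E=|G|$ (a standard computation, using that simplicity of $A$ forces $\Index E$ to be a scalar, and that $E_\infty=\frac{1}{|G|}\sum_g\alpha_{\infty,g}$ is the sequence-algebra lift of $E$).

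For the ``only if'' direction, suppose $\alpha$ has the Rokhlin property with partition of unity $\{e_g\}_{g\in G}\subset A_\infty\cap A'$ satisfying $\alpha_{\infty,h}(e_g)=e_{hg}$. I would take $e:=e_1$ as the candidate Rokhlin projection for $E$. Since $\alpha_{\infty,g}(e_1)=e_g$, applying $E_\infty$ to $e_1$ collapses the partition of unity, giving $E_\infty(e)=\frac{1}{|G|}\sum_g e_g=\frac{1}{|G|}=(\Index E)^{-1}$. It remains to check injectivity of $x\mapsto xe$ on $A$, and here I would exploit simplicity: the set $I=\{a\in A:ae=0\}$ is norm-closed (as the kernel of $a\mapsto ae$), is a left ideal, and is closed under adjoints because $e$ is a self-adjoint element of the commutant $A'$ (so $ae=0$ gives $ea^*=a^*e=0$); a norm-closed, adjoint-closed left ideal is two-sided, and since $e\neq 0$ we have $1\notin I$, whence $I=\{0\}$ by simplicity. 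Thus $e$ is a Rokhlin projection for $E$.

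For the ``if'' direction, suppose $E$ has a Rokhlin projection $e\in A_\infty\cap A'$ with $E_\infty(e)=(\Index E)^{-1}=|G|^{-1}$. I would set $e_g:=\alpha_{\infty,g}(e)$. Each $e_g$ is a projection; it lies in $A_\infty\cap A'$ because for $a\in A$ one has $e_g a=\alpha_{\infty,g}\big(e\,\alpha_{g^{-1}}(a)\big)=\alpha_{\infty,g}\big(\alpha_{g^{-1}}(a)\,e\big)=a\,e_g$, using centrality of $e$. The equivariance $\alpha_{\infty,h}(e_g)=\alpha_{\infty,hg}(e)=e_{hg}$ is immediate. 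Summing and using the index identity, $\sum_g e_g=|G|\,E_\infty(e)=|G|\cdot|G|^{-1}=1$, so the $e_g$ are nonzero (as $e\neq 0$) and sum to the unit. Orthogonality is then automatic: for $h\neq g$, expanding $e_h=e_h\big(\sum_k e_k\big)e_h$ forces $\sum_{k\neq h}e_h e_k e_h=0$, and positivity of each summand yields $e_h e_k e_h=0$, hence $e_k e_h=0$. Therefore $\{e_g\}_{g\in G}$ is a Rokhlin partition of unity for $\alpha$.

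The remaining verifications (that $E_\infty$ is the correct lift and that the $e_g$ are central) are routine, so I expect the only genuinely external input to be the identity $\Index E=|G|$ for the canonical expectation of a saturated finite group action, which is the point I would pin down most carefully; everything else reduces to the two elementary facts that a norm-closed, adjoint-closed left ideal in a simple $C^*$-algebra is trivial, and that projections summing to the identity are mutually orthogonal.
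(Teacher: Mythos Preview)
Your proof is correct. The paper does not give its own proof of this proposition but simply cites \cite[Proposition 3.2]{OKT:Rokhlin}, so there is no in-paper argument to compare against; your approach---passing between the Rokhlin projection $e$ and the translates $e_g=\alpha_{\infty,g}(e)$---is precisely the standard one. One small remark: in the ``if'' direction you invoke $\Index E=|G|$, and while you flag this as the point to pin down, note that the chain of reasoning here is that $E$ having the Rokhlin property presupposes $E$ is of index-finite type, whence $\alpha$ is saturated by Theorem~\ref{T:saturated}, and then $\Index E=|G|$; you might want to make that order of implications explicit since your opening paragraph phrases it as starting from a Rokhlin \emph{action}. Also, your orthogonality argument can be streamlined: from $\sum_g e_g=1$ one gets for $h\neq k$ that $e_h+e_k\le 1$, so $e_he_ke_h\le e_h(1-e_h)e_h=0$, and since $e_he_ke_h\ge 0$ this gives $e_ke_h=0$ directly.
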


In the following we present a parallel result for the approximately representable action $\alpha:G \curvearrowright A$. In other words, we would like to characterize the approximate representability of $\alpha$ in terms of the conditional expectation $E$ from $A$ onto the fixed point algebra $A^{\alpha}$. 

\begin{defn}(M. Izumi)\label{D:Approximaterepresentability}
Let $\alpha:G \curvearrowright  A$ be an action of a finite group $G$ on $A$. When $G$ is abelian, then the action $\alpha$ is called approximately representable if there is a unitary representation $\omega:G \to (A^{\alpha})_{\infty}$ such that for all $a\in A$
\[\alpha(a)=\omega_g a \omega^*_g. \]
\end{defn}
A tracial version the following theorem will be proved later, but at this moment we need it as a tool for our goal.   
\begin{thm}\cite[Lemma 3.8]{Izumi:finite}\label{T:dualityoffiniteabeliangroup}
Let $\alpha:G \curvearrowright  A$ be an action of a finite abelian group $G$ on a unital $C\sp*$-algebra $A$ and $\widehat{\alpha} \curvearrowright A\rtimes_{\alpha} G$ the dual action of $\widehat{G}$ on the crossed product algebra.  
\begin{enumerate}
\item $\alpha$ has the Rokhlin property if and only if $\widehat{\alpha}$ is approximately representable.
\item $\widehat{\alpha}$ has the Rokhlin property if and only if $\alpha$ is approximately representable.
\end{enumerate}
\end{thm}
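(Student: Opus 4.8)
The plan is to establish part~(1) by a direct Fourier-transform argument over the finite abelian group $G$, and then to deduce part~(2) from part~(1) together with Takai--Pontryagin duality. Throughout, realize $A\rtimes_\alpha G=\overline{\operatorname{span}}\{a u_g : a\in A,\ g\in G\}$ with $u_g a u_g^*=\alpha_g(a)$ and $u_g u_h=u_{gh}$, and fix the convention $\widehat\alpha_\chi(a u_g)=\chi(g)\,a u_g$ for $\chi\in\widehat G$. Recall that the fixed point algebra $(A\rtimes_\alpha G)^{\widehat\alpha}$ is the canonical copy of $A$, and likewise $((A\rtimes_\alpha G)^{\widehat\alpha})_\infty=A_\infty\subseteq(A\rtimes_\alpha G)_\infty$; so ``$\widehat\alpha$ is approximately representable'' means precisely the existence of a unitary representation $\omega\colon\widehat G\to A_\infty$ with $\omega_\chi y\omega_\chi^*=\widehat\alpha_\chi(y)$ for all $y\in A\rtimes_\alpha G$.

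For the forward implication of (1), suppose $\alpha$ has the Rokhlin property with a partition of unity of projections $\{e_g\}_{g\in G}\subseteq A_\infty\cap A'$ satisfying $\alpha_{\infty,h}(e_g)=e_{hg}$. Put $\omega_\chi:=\sum_{g\in G}\chi(g)\,e_g\in A_\infty\cap A'$. Using only that the $e_g$ are orthogonal projections summing to $1$, one checks that each $\omega_\chi$ is unitary and that $\chi\mapsto\omega_\chi$ is a representation of $\widehat G$; using the equivariance of $\{e_g\}$ and $\chi(h^{-1})=\overline{\chi(h)}$ one gets $\alpha_{\infty,h}(\omega_\chi)=\overline{\chi(h)}\,\omega_\chi$. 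Since $\omega_\chi$ commutes with $A$ and $xu_h=u_h\alpha_{\infty,h^{-1}}(x)$ in $(A\rtimes_\alpha G)_\infty$, it follows that $\omega_\chi y\omega_\chi^*=\widehat\alpha_\chi(y)$ on the generators $y\in A\cup\{u_h\}$, so $\omega$ witnesses approximate representability of $\widehat\alpha$. Conversely, if $\widehat\alpha$ is approximately representable via a representation $\omega\colon\widehat G\to A_\infty$, then $\widehat\alpha|_A=\operatorname{id}$ forces $\omega_\chi\in A_\infty\cap A'$, and $\omega_\chi u_h\omega_\chi^*=\chi(h)u_h$ forces $\alpha_{\infty,h}(\omega_\chi)=\overline{\chi(h)}\,\omega_\chi$. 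Defining $e_h:=\frac{1}{|G|}\sum_{\chi\in\widehat G}\overline{\chi(h)}\,\omega_\chi$ and invoking the orthogonality relations $\sum_{\chi}\chi(k)\overline{\chi(h)}=|G|\,\delta_{h,k}$ and $\sum_{h}\overline{\chi(h)}=|G|\,\delta_{\chi,1}$, one verifies that $\{e_h\}_{h\in G}$ is a partition of unity of projections in $A_\infty\cap A'$ with $\alpha_{\infty,k}(e_h)=e_{kh}$; that is, $\alpha$ has the Rokhlin property. This proves (1). (The asymmetry of what follows is forced: the Rokhlin projections of $\widehat\alpha$ are too central in $A\rtimes_\alpha G$ to implement $\alpha$ on $A$ directly, so for (2) one cannot simply repeat the above.)

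For (2), apply part (1) to the $C^*$-dynamical system $(A\rtimes_\alpha G,\ \widehat G,\ \widehat\alpha)$, whose dual action $\widehat{\widehat\alpha}$ of $\widehat{\widehat G}\cong G$ lives on $(A\rtimes_\alpha G)\rtimes_{\widehat\alpha}\widehat G$: this gives $\widehat\alpha$ has the Rokhlin property $\iff$ $\widehat{\widehat\alpha}$ is approximately representable. By Takai duality, $(A\rtimes_\alpha G)\rtimes_{\widehat\alpha}\widehat G\cong A\otimes M_{|G|}$ and, under this isomorphism, $\widehat{\widehat\alpha}$ is conjugate to $\alpha\otimes\operatorname{Ad}\lambda$ for the left regular representation $\lambda$ of $G$; moreover $\alpha\otimes\operatorname{Ad}\lambda$ is exterior equivalent to $\alpha\otimes\operatorname{id}_{M_{|G|}}$ via the cocycle $g\mapsto 1_A\otimes\lambda_g$. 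It therefore suffices to know that approximate representability is stable under (a) conjugacy, (b) exterior equivalence, and (c) the amplification $\alpha\rightsquigarrow\alpha\otimes\operatorname{id}_{M_n}$. For (a) there is nothing to do; for (b) one replaces the implementing representation $\omega_g$ by $u_g\omega_g$ (where $u$ is the unitary cocycle), which is again a representation since $\omega_g$ implements $\alpha_g$ on the algebra containing $u_h$, and which, using that $G$ is abelian, lands in the fixed-point sequence algebra of the twisted action; for (c) one uses $(A\otimes M_n)^{\alpha\otimes\operatorname{id}}=A^\alpha\otimes M_n$ and a routine corner argument. Chaining these equivalences with part (1) applied to $\widehat\alpha$ yields $\widehat\alpha$ has the Rokhlin property $\iff$ $\alpha$ is approximately representable, which is (2).

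The Fourier computations in (1) are routine once the orthogonality relations are in hand, and they are genuinely inverse to one another (the second partition of unity is the Fourier transform of the first and vice versa). The main obstacle is the bookkeeping in (2): correctly identifying, under Takai duality, the double dual action $\widehat{\widehat\alpha}$ with $\alpha\otimes\operatorname{Ad}\lambda$, and then checking cleanly that approximate representability survives conjugacy, exterior equivalence and matrix amplification — each step standard in isolation, but all requiring care that the implementing unitaries stay inside the correct fixed-point sequence algebra. A minor recurring nuisance is keeping the $\chi\leftrightarrow\overline{\chi}$ conventions for the dual action consistent; since this only permutes $\widehat G$, it never affects the truth of the statement.
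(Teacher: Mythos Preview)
Your argument is correct. The Fourier computations in part~(1) are exactly Izumi's original ones, and the reduction of~(2) to~(1) via Takai duality together with the three stability checks (conjugacy, exterior equivalence, matrix amplification) goes through as you outline; in particular, the verification that $\tilde\omega_g=u_g\omega_g$ lands in $(A^\beta)_\infty$ hinges on the cocycle identity $u_h\alpha_h(u_g)=u_g\alpha_g(u_h)$ for abelian $G$, which you flag.

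The paper, however, does not give its own argument here: it simply defers to \cite[Theorem~4.27]{BS}, whose proof is organized quite differently. Barlak and Szab\'o recast both the Rokhlin property and approximate representability as the $G$-equivariant sequential splitting of canonical maps --- respectively $\id_A\otimes 1_{C(G)}\colon(A,\alpha)\to(A\otimes C(G),\alpha\otimes\sigma)$ and $\iota_A\colon(A,\alpha)\to(A\rtimes_\alpha G,\Ad\lambda^\alpha)$ --- and then prove the duality by showing that taking crossed products carries one of these maps to (an isomorphic copy of) the other. This packages the Takai-duality bookkeeping once and for all into the functoriality of $\phi\mapsto\phi\rtimes G$ on sequentially split maps, and has the advantage of working verbatim for second countable compact abelian groups. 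Your approach, by contrast, is more elementary and self-contained for finite $G$: it avoids the sequentially-split machinery entirely, at the cost of doing the exterior-equivalence and corner arguments by hand. Either route is fine; yours is closer in spirit to Izumi's original lemma, while the cited one fits the conceptual framework the paper is advertising.
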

\begin{proof}
For the proof, we rather recommend \cite[Theorem 4.27]{BS} which covers even for a  second countable compact abelian group with its dual as a discrete countable abelian group. 
\end{proof}

Although we don't use the following observation later, we include it since it provides a clue to  what we want to obtain.
\begin{prop}
Let $G$ be a finite abelian group and $\alpha:G \curvearrowright A$ approximately representable. 
Then the conditional expectation $E:A \to A^{\alpha}$ defined by $\displaystyle E(a)=\frac{1}{|G|} \sum_{g} \alpha_g(a)$ satisfies a covariant relation (see (1) of Definition \ref{D:approximaterepresentability}). i.e. there is a projection $e \in (A^{\alpha})_{\infty}\cap (A^{\alpha})^{'}$ such that for all $a\in A$
\[eae=E(a)e.\]
\end{prop}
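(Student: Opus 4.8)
The plan is to unwind the definition of approximate representability for the action $\alpha$ and produce, from the unitary representation $\omega:G\to (A^\alpha)_\infty$ witnessing it, a projection in $(A^\alpha)_\infty\cap(A^\alpha)'$ implementing the desired covariant relation. First I would use the standard device: given $\omega:G\to (A^\alpha)_\infty$ with $\alpha_g(a)=\omega_g a\omega_g^*$ for all $a\in A$, form the element
\[
e=\frac{1}{|G|}\sum_{g\in G}\omega_g.
\]
Since $\omega$ is a unitary representation of the finite abelian group $G$, this averaged element $e$ is a projection in $(A^\alpha)_\infty$; this is the usual fact that $\frac{1}{|G|}\sum_g \omega_g$ is the spectral projection of the trivial character, and $e^*=e$, $e^2=e$ follow from $\omega_g\omega_h=\omega_{gh}$ and a reindexing of the sum.

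Next I would check that $e$ lies in the relative commutant $(A^\alpha)_\infty\cap (A^\alpha)'$. For $b\in A^\alpha$ we have $\alpha_g(b)=b$, hence $\omega_g b\omega_g^*=b$, i.e. $\omega_g b=b\omega_g$ for every $g$; averaging gives $eb=be$. Since $\omega$ already takes values in $(A^\alpha)_\infty$, $e\in (A^\alpha)_\infty$, so indeed $e\in (A^\alpha)_\infty\cap(A^\alpha)'$. Then I would verify the covariant relation $eae=E(a)e$ for $a\in A$. Compute
\[
eae=\frac{1}{|G|^2}\sum_{g,h}\omega_g a\,\omega_h
   =\frac{1}{|G|^2}\sum_{g,h}\omega_g a\,\omega_g^*\,\omega_g\omega_h
   =\frac{1}{|G|^2}\sum_{g,h}\alpha_g(a)\,\omega_{gh},
\]
and reindexing the inner sum by $k=gh$ turns this into $\frac{1}{|G|}\sum_g \alpha_g(a)\cdot\frac{1}{|G|}\sum_k\omega_k=E(a)\,e$, as wanted. (One should also record $ae e=eae$ trivially since $e$ is a projection, and that $E(a)\in A^\alpha$ commutes with $e$ by the previous step, which is what makes the formula symmetric.)

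The only subtlety — and the step I would treat most carefully — is the passage between $(A^\alpha)_\infty$ as computed inside $A_\infty$ versus intrinsically, and the fact that the covariant identity is an identity in $A_\infty$ whereas $E(a)e$ a priori lives in $(A^\alpha)_\infty$; but since $A^\alpha\subset A$ induces $(A^\alpha)_\infty\subset A_\infty$ compatibly, and all the manipulations above are literal identities among representing sequences, no genuine obstacle arises. I expect the whole argument to be short; the main conceptual point is simply the recognition that the "Rokhlin-type" projection for the inclusion $A^\alpha\subset A$ is nothing but the character-zero spectral projection of the implementing representation $\omega$, which is exactly the clue the proposition is meant to furnish for the inclusion-theoretic reformulation pursued in the rest of the paper.
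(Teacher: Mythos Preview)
Your proposal is correct and follows essentially the same approach as the paper: define $e=\frac{1}{|G|}\sum_{g}\omega_g$, verify it is a projection in $(A^\alpha)_\infty\cap(A^\alpha)'$, and then compute $eae=E(a)e$ by inserting $\omega_g^*\omega_g$ and reindexing. Your reindexing (fix $g$ and let $k=gh$ run over $G$) is in fact slightly cleaner than the paper's two-step substitution, but otherwise the arguments coincide.
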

\begin{proof}
Let $e=\dfrac{1}{|G|}\sum_g \omega_g$. Then $e^*=\dfrac{1}{|G|}\sum_g \omega_{g^{-1}}=e$ and 
\[ \begin{split}e^2&=\dfrac{1}{|G|}\sum_g \omega_g \dfrac{1}{|G|}\sum_h \omega_h \\
&=\dfrac{1}{|G|^2}\sum_{g,h} \omega_{gh}\\
&= \dfrac{1}{|G|}\sum_g  \dfrac{\sum_h \omega_{gh}}{|G|}\\
&=\dfrac{1}{|G|}\sum_g e=e.
\end{split}\] Thus $e$ is a projection. Moreover, for $a\in A^{\alpha}$ 
\[ea=\dfrac{1}{|G|}\sum_g \omega_g a= \dfrac{1}{|G|}\sum_g \alpha_g(a)\omega_g=a \dfrac{1}{|G|}\sum_g \omega_g=ae.\] Hence $e \in (A^{\alpha})_{\infty}\cap (A^{\alpha})^{'}.$ 
Finally, for $x\in A$
\[\begin{split}
exe&= \left(\dfrac{1}{|G|}\sum_g \omega_g\right) \, x \, \left(\dfrac{1}{|G|}\sum_h \omega_h\right)\\
&=\frac{1}{|G|^2}\sum_{g,h}\alpha_g(x)\omega_{gh}\\
&=\frac{1}{|G|^2}\sum_{h,s}\alpha_{h^{-1}s}(x)\omega_{s}\\
&=\frac{1}{|G|^2}\sum_{s,h}\alpha_{sh^{-1}}(x)\omega_{s}\\
&=\frac{1}{|G|}\sum_s \alpha_s \left( \frac{\sum_{h}\alpha_{h^{-1}}(x)}{|G|}\right)\omega_s\\
&=\frac{1}{|G|}\sum_s E(x) \omega_s=E(x)e
\end{split}\]
\end{proof}

We need the following theorem as a final piece of our puzzle, which is of independent interest in the sense that it is an extension of Theorem \ref{T:dualityoffiniteabeliangroup}.  We remark that the following statement was claimed in \cite{OKT:Rokhlin} but the proof was incomplete in view of Definition \ref{D:approximaterepresentability}.

\begin{thm}\label{T:dualityofinclusion}
Let $P\subset A$ be an inclusion of unital $C\sp*$-algebras and $E$ a conditional expectation from A onto P with a finite index. Let $B=C^*\langle A, e_P\rangle$ be the basic construction and $\widehat{E}$ the dual conditional expectation of $E$ from $B$ onto $A$. Then
\begin{enumerate}
\item  $E$ has the Rokhlin property if and only if $\widehat{E}$ is approximately representable; 
\item  $E$ is approximately representable if and only if $\widehat{E}$ has the Rokhlin property.
\end{enumerate}
\end{thm}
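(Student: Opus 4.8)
The plan is to prove both dualities by transferring the structural characterizations established in Propositions \ref{P:approximaterepresentable} and \ref{P:Rokhlinpropertyviamap} back and forth between the inclusion $P\subset A$ and the inclusion $A\subset B$, using the fact that Watatani's basic construction is ``self-dual'' up to a natural identification. The central device is the well-known fact that for an index-finite faithful conditional expectation $E\colon A\to P$ with dual $\widehat E\colon B\to A$, the basic construction $C^*\langle B, e_A\rangle$ of the pair $(A,B,\widehat E)$ is canonically isomorphic to $A\rtimes$ (in the group case) or, more generally, to a matrix-amplification-type object in which the original algebra $P$ reappears: concretely, one has $C^*\langle B, e_A\rangle \cong P^{e}$, an isomorphism under which $(\Index E) e_A e_P e_A = e_A$ and $e_P (\Index E) e_A e_P = e_P$, i.e. the two Jones projections interchange roles, and the second dual expectation $\widehat{\widehat E}$ restricts to $E$ on $P\subset A$. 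I would isolate this as a preliminary lemma (or cite it from Watatani \cite{W:index}), since everything else is then bookkeeping with the two characterizations already in hand.

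For part (1), assume $E$ has the Rokhlin property. By Proposition \ref{P:Rokhlinpropertyviamap} there are $\beta\colon A\to P_\infty$ and a Rokhlin projection $e\in A_\infty\cap A'$ with $ae=\beta(a)e$, $(\Index E)\,e e_P e = e$, and the injectivity condition on $P_\infty$. I want to produce the data of Definition \ref{D:approximaterepresentability} for $\widehat E\colon B\to A$: a projection $f\in A_\infty\cap A'$ with $fxf=\widehat E(x)f$ for all $x\in B$, a finite set $\{w_i\}\subset B$ with $\sum_i w_i f w_i^* = 1$, and injectivity of $x\mapsto xf$ on $A$. The natural candidate is $f=(\Index E)\,e_P e$, interpreted inside $B_\infty$ via the embedding $A\hookrightarrow B$; one checks $f$ is a projection using $(\Index E) e e_P e = e$ together with $e_P x e_P = E(x)e_P$, that $f$ is central in $A$ because $e$ is and $E$ is a $P$-bimodule (hence $A$ is handled via the bimodule property over $P$ — this needs a small computation using $\beta$), that $fxf=\widehat E(x)f$ reduces via $\widehat E(\lambda(x)e_P\lambda(y)) = (\Index E)^{-1}xy$ to the covariance relation for $e$, and that $\{(w_i,w_i^*)\}$ can be taken from a quasi-basis of $\widehat E$ (which exists since $\widehat E$ is index-finite). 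Conversely, if $\widehat E$ is approximately representable, Proposition \ref{P:approximaterepresentable} gives a unital injective $\psi\colon C^*\langle B,e_A\rangle\to A_\infty$ restricting to the identity on $B$ with $\psi(e_A)\in A_\infty\cap A'$; composing with the identification $C^*\langle B,e_A\rangle\cong P^{e}$ and reading off the image of the Jones projection $e_P$ produces the Rokhlin projection for $E$, and condition (2) of Proposition \ref{P:Rokhlinpropertyviamap} follows from the interchange relation $(\Index E) e_A e_P e_A = e_A$.

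Part (2) is formally the same argument with the roles of ``Rokhlin property'' and ``approximately representable'' swapped, now applied to the pair $A\subset B$: if $E$ is approximately representable, Proposition \ref{P:approximaterepresentable} hands us a unital injective $\psi\colon C^*\langle A,e_P\rangle = B\to P_\infty$ — wait, more precisely $\psi\colon B\to A_\infty$ fixing $A$ with $\psi(e_P)\in P_\infty\cap P'$ — and I feed this, through the self-duality identification and Proposition \ref{P:Rokhlinpropertyviamap} applied to $\widehat E\colon B\to A$, to get that $\widehat E$ has the Rokhlin property; the simplicity hypothesis needed in Proposition \ref{P:Rokhlinpropertyviamap} is available since $A$ simple and index-finite forces $B$ simple. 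For the converse, $\widehat E$ having the Rokhlin property gives via Proposition \ref{P:Rokhlinpropertyviamap} a map $A\to$ (fixed algebra)$_\infty$ plus a projection, which after the identification yields exactly the approximate-representability data for $E$.

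The main obstacle I expect is the preliminary self-duality lemma: making precise the isomorphism $C^*\langle B,e_A\rangle\cong P^{e}$ and verifying that under it the two Jones projections, the two index elements, and the iterated dual expectation all match up as claimed — in particular getting the normalization constants right so that relations like $(\Index E)\,e_P e\, $ being a projection come out cleanly, and handling the centrality conditions (elements that commute with $A$ vs. with $P$) correctly when passing between the two pictures. Once that lemma is in place, each of the four implications is a short translation between Proposition \ref{P:approximaterepresentable} and Proposition \ref{P:Rokhlinpropertyviamap}, and the only remaining care is to track the injectivity/non-degeneracy clauses (the ``$x\mapsto xe$ injective'' conditions), which propagate because the identifications involved are isomorphisms of $C^*$-algebras.
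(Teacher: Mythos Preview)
Your overall strategy---reduce everything to the two characterizations in Propositions \ref{P:approximaterepresentable} and \ref{P:Rokhlinpropertyviamap} together with a ``self-duality'' of the Jones tower---is appealing, but the execution has several genuine gaps, and it diverges from the paper's much more direct argument.

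First, your candidate projection in part (1) forward is wrong. Approximate representability of $\widehat{E}\colon B\to A$ requires a projection in $A_\infty\cap A'$, not in $B_\infty$; your $f=(\Index E)\,e_P e$ lives in $B_\infty$ and is not even self-adjoint, since $e_P$ and $e$ have no reason to commute (only $e_P e e_P = E_\infty(e)e_P$ holds). The paper does not manufacture a new projection at all: the Rokhlin projection $e\in A_\infty\cap A'$ for $E$ already sits exactly where it needs to be for approximate representability of $\widehat{E}$. The covariance relation $exe=\widehat{E}(x)e$ for $x\in B$ follows immediately from $(\Index E)\,ee_P e = e$, and the remaining datum $\sum_i u_i e u_i^*=1$ is obtained by setting $u_i=\sqrt{\Index E}\,v_i e_P$ for a quasi-basis $\{(v_i,v_i^*)\}$ of $E$. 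The converse of (1) is handled similarly: the projection coming from approximate representability of $\widehat{E}$ is itself a Rokhlin projection for $E$, after a short computation showing $(\Index E)E_\infty(e)=1$.

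Second, the ``self-duality lemma'' you want, namely an isomorphism $C^*\langle B,e_A\rangle \cong P^{e}$, is never made precise (what is $P^e$?), and in the generality of the theorem---no simplicity, no factoriality---no such clean identification is available to do the work you ask of it. Relatedly, you invoke Proposition \ref{P:Rokhlinpropertyviamap}, which carries a simplicity hypothesis that the theorem does not; routing through it would weaken the result. The paper's proof of part (2) avoids all of this: from approximate representability of $E$ with projection $e\in P_\infty\cap P'$ it builds the Rokhlin projection for $\widehat{E}$ directly as $f=\sum_i u_i e e_P u_i^*\in B_\infty\cap B'$, and in the converse direction it recovers $e$ from a Rokhlin projection $f$ for $\widehat{E}$ via $e=(\Index E)\,\widehat{E}_\infty(fe_P)$. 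In short, the paper never climbs the tower to $C^*\langle B,e_A\rangle$; each implication is a one-step explicit construction, and your proposed detour through a higher basic construction is both unnecessary and, as formulated, not correct.
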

\begin{proof}
(1): Let $e$ be the Rokhlin projection  in $A_{\infty}\cap A'$. We will show that there exists a finite set $\{u_i\} \subset B$ such that \[\sum_i u_i e u_i^*=1. \] Let $\{(v_i,v_i^*) \}$ a quasi-basis for $E$. Then put $u_i=\sqrt{\Index E} v_i e_p$.
Then \[
\begin{aligned}
\sum_i u_ieu_i^*&= \sum_i  \Index E (v_i e_P e e_P v_i^*) \\
&= \sum_i \Index E( v_i E_{\infty}(e)e_P v^*_i)\\
&=\sum_i v_i e_Pv_i^*=1
\end{aligned}
\]
Conversely, if $\widehat{E}$ is approximately representable, then we have a projection $e \in A_{\infty}\cap A$ and a finite set $\{u_i\} \in B$ such that 
\begin{equation}
eze=\widehat{E}(z)e \quad \forall z \in B,
\end{equation}
\begin{equation}
\sum_i u_ieu_i^*=1
\end{equation}
Write $u_i=\sum_j a_{ij} e_P b_{ij}$ for some $a_{ij}, b_{ij} \in A$. Let $w_i=\sum_j E(a_{ij})b_{ij}$. 

Note that \[\begin{split}
\sum_i E_{\infty}(w_iew_i^*)e_P&=\sum_i E_{\infty} \left( \left(\sum_j E(a_{ij})b_{ij} \right) e \left(\sum_k b_{ik}^* E(a_{ik}^*)\right) \right) e_P\\
&=\sum_i e_P \left(\left(\sum_j E(a_{ij})b_{ij} \right) e \left(\sum_k b_{ik}^* E(a_{ik}^*)\right)\right)e_P\\
&=\sum_i e_P \left(\sum_j a_{ij}e_Pb_{ij}\right) e \left(\sum_{k}b_{ik}^* e_P a_{ik}^*\right) e_P\\
&=e_P. 
\end{split}
\]
It follows that $\sum_i E(w_iew_i^*)=1$.  If $x\in P_{\infty}$ such that $xe=0$, then 
\[\begin{split}
0=xe=xe\sum_i w_iw_i^*
\end{split}
\]
Therefore, $x=xE_{\infty}(e\sum_i w_iw_i^*)=E_{\infty}(xe\sum_i w_iw_i^*)=0$. In other words, the map $x \to xe$ is injective for $x\in P_{\infty}$. Note that $ee_Pe=\widehat{E}(e_P)e=(\Index E)^{-1}e$.
Then 
\[
\begin{split}
e&=(\Index E)\widehat{E}_{\infty}(e_Pe)\\
&=(\Index E)^2 \widehat{E}_{\infty}(e_Pee_Pe)\\
&=(\Index E)^2\widehat{E}_{\infty}(E_{\infty}(e)e_Pe)\\
&=(\Index E)E_{\infty}(e)e. 
\end{split}
\]
Since $P_{\infty} \ni x \to xe$ injective, $(\Index E)E_{\infty}(e)=1$.\\
(2): Suppose that $E$ is approximately representable with a projection $e\in P_{\infty}\cap P'$ satisfying that $exe=E(x)e$ for any $x$ in $A$. Let $\{u_1,\dots, u_n\}$ be a finite set of $A$ such that 
\begin{equation}
\sum_i u_ieu_i^*=1.
\end{equation}
Define an element $f$ in $B_{\infty}$ by
\[f=\sum_i^n u_i ee_Pu_i^*.\] Since $\{(u_i,u_i^*)\}$ is a quasi-basis for $E$, by \cite[Remark 2.2 (4)]{OKT:Rokhlin} it commutes with $A$. Moreover, $f$ commutes with $e_P$. Therefore, $f\in B_{\infty}\cap B'$. Since
\[\widehat{E}_{\infty}(f)= (\Index E)^{-1}\sum_{i=1}^n u_i eu_i^*=(\Index E)^{-1},\]
it remains to show that the map $B\ni x \to xf$ is injective. Suppose that $xf=0$. We may assume $x$ is of the form $ae_Pb$ for some $a,b$ in $A$. Then \[
\begin{split}
xf&=ae_Pb \sum u_iee_Pu_i^*\\
&=a\sum_i e_Pbu_ie_Peu_i^*\\
&=\sum aE(bu_i)e_Peu_i^*.
\end{split}
\]
By taking $\widehat{E}_{\infty}$, we have $\sum_i aE(bu_i)eu_i^*=0$. Then 
\[\psi(\sum_i aE(bu_i)e_Pu_i^*)=\sum_i aE(bu_i)eu_i^*=0\] where $\psi$ is the map in Proposition \ref{P:approximaterepresentable}.
By the injectivity of $\psi$, 
\[ \begin{split}
0&=\sum_i aE(bu_i)e_Pu_i^*\\
&=\sum_i ae_Pbu_ie_Pu_i^*\\
&=x\sum_i u_ie_Pu_i^*=x.
\end{split}
\] 
Conversely, suppose that $\widehat{E}$ has the Rokhlin property with a Rokhlin projection $f$ in $B_{\infty}\cap B'$. Define an element  $e$ in $A_{\infty}$ by $e=(\Index E)\widehat{E}_{\infty}(fe_P)$. Then we can check that $e$ is a projection in $P_{\infty}\cap P'$ and satisfy the covariance relation for $E$.  Also, it is easy to show that $P\ni x \to xe$ is injective. For details, see \cite[Proposition 3.4]{OKT:Rokhlin}. Now take a quasi-basis $\{(u_i,u_i^*)\}$ for $E$. Then 
\[
\begin{split}
\sum_i u_i eu_i^*&=\sum_i u_i(\Index E)\widehat{E}_{\infty}(fe_P)u_i^*\\
&=(\Index E)\sum \widehat{E}_{\infty}(u_ife_Pu_i^*)\\
&=(\Index E)\widehat{E}_{\infty}(f(\sum_i u_i e_Pu_i^*))\\
&=(\Index E)\widehat{E}_{\infty}(f)=1.
\end{split}
\]
\end{proof}
Finally we are ready to give a reformulation of approximate representability in term of inclusion of $C\sp*$-algebras.  
\begin{prop}\label{P:approxirepresentableviainclusions}
Let $G$ be a finite abelian group and $A$ a unital simple $C\sp*$-algebra. Suppose that $\alpha:G \curvearrowright A$ be an action such that the crossed product $A\rtimes_{\alpha}G$ is simple. Then $\alpha$ is approximately representable if and only if the conditional expectation $E:A \to A^{\alpha}$ defined by $\displaystyle E(a)=\frac{1}{|G|} \sum_{g \in G} \alpha_g(a)$ is approximately representable. 
\end{prop}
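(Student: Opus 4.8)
The plan is to reduce the statement to the duality result for inclusions, Theorem~\ref{T:dualityofinclusion}, by identifying the $C\sp*$-basic construction $C^*\langle A, e_{A^\alpha}\rangle$ for the inclusion $A^\alpha\subset A$ with the crossed product $A\rtimes_\alpha G$, under which the dual conditional expectation $\widehat E$ corresponds to the canonical expectation $A\rtimes_\alpha G\to A$ coming from the dual action $\widehat\alpha$ of $\widehat G$. Since $G$ is finite abelian and $\alpha$ is approximately representable, $\alpha$ is in particular outer on the simple algebra $A$ (approximate representability forces outerness on a simple algebra, as the implementing unitaries lie in $(A^\alpha)_\infty$), hence saturated; by Theorem~\ref{T:saturated} the expectation $E:A\to A^\alpha$ is of index-finite type, so all the machinery of the previous section applies. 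The key identification is the standard fact (Watatani) that for a saturated finite group action, $C\sp*\langle A,e_{A^\alpha}\rangle\cong A\rtimes_\alpha G$ with $e_{A^\alpha}$ mapping to $\frac1{|G|}\sum_g u_g$ where $u_g$ are the canonical implementing unitaries, and that under this isomorphism $\widehat E$ becomes the standard expectation onto $A$ which is precisely the averaging over the dual action $\widehat\alpha$ of $\widehat G$.

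First I would record this identification carefully, noting that $\widehat E$ of index-finite type corresponds to $\widehat\alpha$ being saturated, equivalently (since $\widehat G$ is finite abelian and $A\rtimes_\alpha G$ is simple by hypothesis) to $\widehat\alpha$ being outer, which holds automatically here. Next I would invoke Theorem~\ref{T:dualityofinclusion}(2): $E:A\to A^\alpha$ is approximately representable if and only if $\widehat E$ has the Rokhlin property. Then I would apply Proposition~\ref{P:ERokhlin} to the action $\widehat\alpha:\widehat G\curvearrowright A\rtimes_\alpha G$ and its averaging expectation onto the fixed-point algebra $(A\rtimes_\alpha G)^{\widehat\alpha}=A$: this says $\widehat E$ has the Rokhlin property if and only if the action $\widehat\alpha$ has the Rokhlin property. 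Finally, Theorem~\ref{T:dualityoffiniteabeliangroup}(2) (the Izumi duality $\widehat\alpha$ has the Rokhlin property $\iff$ $\alpha$ is approximately representable, applicable since the double dual $\widehat{\widehat\alpha}$ is conjugate to $\alpha$ up to the canonical isomorphism $A\rtimes_\alpha G\rtimes_{\widehat\alpha}\widehat G\cong A\otimes \mathbb K(\ell^2 G)$ — or one simply applies part (1) with $\alpha$ replaced by $\widehat\alpha$) closes the chain: $\widehat\alpha$ has the Rokhlin property $\iff$ $\alpha$ is approximately representable. Chaining these four equivalences gives the result.

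The main obstacle I anticipate is the bookkeeping in the first step: one must check not merely that $C\sp*\langle A, e_{A^\alpha}\rangle$ and $A\rtimes_\alpha G$ are abstractly isomorphic, but that the dual conditional expectation $\widehat E$ is carried to the expectation implemented by $\widehat\alpha$, and that ``$\widehat E$ has the Rokhlin property'' in the sense of the inclusion definition matches ``$\widehat\alpha$ has the Rokhlin property'' in Izumi's sense — this is exactly the content of Proposition~\ref{P:ERokhlin} applied to $\widehat\alpha$, so the real work is verifying its hypotheses (simplicity of $A\rtimes_\alpha G$, which is assumed; finiteness of the index of the averaging expectation for $\widehat\alpha$, which follows from saturatedness of $\widehat\alpha$, which in turn follows from outerness on the simple algebra $A\rtimes_\alpha G$). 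A secondary point requiring care is confirming that approximate representability of $\alpha$ does force $\alpha$ to be outer on $A$: if $\alpha_g=\operatorname{Ad}(u)$ for a unitary $u\in A$ with $g\ne e$, then combined with the implementing unitary $\omega_g\in(A^\alpha)_\infty$ one gets $u^*\omega_g$ central in $A_\infty$ implementing the identity, and a standard argument using simplicity of $A$ and the fact that $\omega$ is a genuine unitary representation (not merely up to scalars) yields a contradiction; alternatively, one can observe that approximate representability of $\alpha$ already makes $E:A\to A^\alpha$ of index-finite type directly via the explicit quasi-basis built from $\omega$, bypassing the outerness discussion entirely. Once these identifications are in hand, the proof is a purely formal concatenation of previously established equivalences.
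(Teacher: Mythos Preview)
Your proposal is correct and follows essentially the same route as the paper: both arguments chain together Theorem~\ref{T:dualityoffiniteabeliangroup}, Proposition~\ref{P:ERokhlin}, and Theorem~\ref{T:dualityofinclusion}, after identifying the basic construction $C^*\langle A,e_{A^\alpha}\rangle$ with $A\rtimes_\alpha G$ so that $\widehat E$ becomes the averaging expectation for $\widehat\alpha$. The only noteworthy difference is in the preliminary step of establishing that $E$ is of index-finite type: you deduce this from approximate representability of $\alpha$ via an outerness argument, which only serves one direction and whose details (as you yourself flag) are delicate; the paper instead uses the standing hypothesis that $A\rtimes_\alpha G$ is simple, which forces $\alpha$ to be outer on the simple algebra $A$ and hence saturated, giving finite index uniformly for both directions without any appeal to approximate representability.
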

\begin{proof}
 Note that $A^{\alpha}$ is stably isomorphic to the cross product $A\rtimes_{\alpha}G$, where the latter is simple.  It follows that $\alpha$ is saturated, thus $E$ is of finite index and $\Index E= |G|$.
Since the kernel of the map $A^{\alpha} \ni x \to xe$ is trivial, it is injective. 
Now if $\alpha$ is approximately representable, then $\widehat{\alpha}$ has the Rokhlin property by Theorem \ref{T:dualityoffiniteabeliangroup}. Thus the conditional expectation from $A\rtimes_{\alpha} G$ to $A$ has the Rokhlin property by Proposition \ref{P:ERokhlin}. But this is a dual inclusion of $A^{\alpha} \subset A$ so that the inclusion $A^{\alpha} \subset A$ is approximately representable by Theorem \ref{T:dualityofinclusion}. The converse follows by the same reverse argument.  
\end{proof}

\section{Dualities of actions and inclusions:The tracial case}\label{S:tracial}

In this section we recall tracial versions of Rokhlin property and approximate representability for  finite group actions and presents tracial versions of such notions for inclusions of $C\sp*$-algebras with a finite index and establish a duality result between two such notions. 
\subsection{Actions on $C\sp*$-algebras}
N.C. Phillips defined a tracial version of the Rokhlin property of a finite group action in \cite{Phillips:tracial}. 

\begin{defn}(Phillips\cite{Phillips:tracial})
Let $G$ be a finite group and $A$ be an infinite dimensional separable unital $C\sp*$-algebra. We say that $\alpha: G \curvearrowright A$ has the tracial Rokhlin property if  for every finite set $F \subset A$, every $\epsilon>0$, any nonzero positive element $x\in A$ there exist $\{e_g\}_{g\in G}$ mutually orthogonal projections such that 
\begin{enumerate}
\item $\| \alpha_g(e_h)-e_{gh} \| \le \epsilon$, \quad $\forall g, h \in g$,
\item $\|  e_ga -ae_g \| \le \epsilon$, \quad $\forall g \in G$, $\forall a \in F$,
\item  Write $e=\sum_{g} e_g$, and $1-e$ is Murray-von Neumann equivalent to a projection in $\overline{xAx}$. 
\end{enumerate}
\end{defn}

Then we can reformulate the tracial Rokhlin property of $\alpha: G \curvearrowright A$ in terms of exact relations using the central sequence algebra. 

\begin{thm}\label{T:tracialRokhlinaction}
Let $G$ be a finite group and $A$ be a separable unital $C\sp*$-algebra. Then $\alpha: G \curvearrowright A$ has the tracial Rokhlin property if  and only if for any nonzero positive element $x \in A_\infty$ there exist a mutually orthogonal  projections  $e_g$'s in $A_{\infty}\cap A'$ such that 
\begin{enumerate}
\item $\alpha_{\infty, g}(e_h)=e_{gh}$, \quad $\forall g,h\in G$ where $\alpha_{\infty}:G \curvearrowright A_{\infty} $ is the induced action,  
\item 
$1-\sum_{g} e_g$ is Murray-von Neumann equivalent to a projection in $\overline{xA_{\infty}x}$.
\end{enumerate}
\end{thm}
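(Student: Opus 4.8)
The plan is to prove both directions by a standard reindexing/diagonal-sequence argument connecting the "local" (finite $F$, $\epsilon$) formulation with the "exact" formulation in $A_\infty \cap A'$. First I would treat the forward direction. Assume $\alpha$ has the tracial Rokhlin property, and fix a nonzero positive $x \in A_\infty$ with representing sequence $(x_n)$; without loss of generality each $x_n$ is positive and $\|x_n\| = \|x\|$ for all $n$. Choose an increasing sequence of finite subsets $F_1 \subset F_2 \subset \cdots$ of $A$ with dense union, and $\epsilon_n = 1/n$. For each $n$, apply the tracial Rokhlin property with the data $(F_n, \epsilon_n, x_n)$ to obtain mutually orthogonal projections $\{e_g^{(n)}\}_{g \in G}$ in $A$ satisfying the three conditions with parameters $n$; in particular $1 - \sum_g e_g^{(n)}$ is Murray--von Neumann equivalent to a projection $q_n$ in $\overline{x_n A x_n}$, say $q_n = w_n w_n^*$ and $1 - \sum_g e_g^{(n)} = w_n^* w_n$ with $q_n \in \overline{x_n A x_n}$. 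Then set $e_g = [(e_g^{(n)})_n] \in A_\infty$. Condition (2) of the local definition forces $e_g \in A_\infty \cap A'$, mutual orthogonality passes to the limit, and condition (1) gives exactly $\alpha_{\infty,g}(e_h) = e_{gh}$ since $\alpha_{\infty}$ acts coordinatewise. For condition (2) of the theorem, set $w = [(w_n)_n]$ (bounded since partial isometries), $q = [(q_n)_n]$; then $w^* w = 1 - \sum_g e_g$, $w w^* = q$, and $q \in \overline{x A_\infty x}$ because each $q_n \in \overline{x_n A x_n}$ (one checks $\|q_n - x_n y_n x_n\| \to 0$ for suitable bounded $(y_n)$ by functional calculus applied uniformly). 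This gives the required Murray--von Neumann equivalence in $A_\infty$.

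For the converse, assume the exact relations hold in $A_\infty \cap A'$ for every nonzero positive $x \in A_\infty$; I must recover the local statement. Fix $F \subset A$ finite, $\epsilon > 0$, and a nonzero positive $x \in A$. View $x$ as a constant sequence in $A_\infty$, which is nonzero positive, and apply the hypothesis to obtain mutually orthogonal projections $e_g \in A_\infty \cap A'$ with $\alpha_{\infty,g}(e_h) = e_{gh}$ and $1 - \sum_g e_g$ Murray--von Neumann equivalent to a projection in $\overline{x A_\infty x}$, via a partial isometry $w \in A_\infty$. Lift each $e_g$ to a sequence $(e_g^{(n)})_n$; by a standard perturbation argument (projections in $A_\infty$ lift to asymptotically-projection sequences, and a small functional-calculus correction makes them genuine mutually orthogonal projections for large $n$), and using $e_g \in A_\infty \cap A'$ together with $\alpha_{\infty,g}(e_h) = e_{gh}$, for all sufficiently large $n$ the $e_g^{(n)}$ are mutually orthogonal projections in $A$ with $\|e_g^{(n)} a - a e_g^{(n)}\| \le \epsilon$ for all $a \in F$ and $\|\alpha_g(e_h^{(n)}) - e_{gh}^{(n)}\| \le \epsilon$. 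Finally, lifting $w$ to a sequence $(w_n)$ with $\|w_n^* w_n - (1 - \sum_g e_g^{(n)})\| \to 0$ and $w_n w_n^*$ asymptotically a projection in $\overline{x A x}$, one gets for large $n$ a genuine projection in $\overline{x A x}$ Murray--von Neumann equivalent to $1 - \sum_g e_g^{(n)}$ (subprojections of a fixed hereditary subalgebra of a unital $C^*$-algebra behave well under small perturbation). Picking any such $n$ yields condition (3).

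The main obstacle I expect is the bookkeeping in the converse direction around the Murray--von Neumann equivalence: one needs to ensure that after perturbing $1 - \sum_g e_g^{(n)}$ and $w_n w_n^*$ to honest projections, the perturbed target projection still lies in $\overline{x A x}$ (not merely close to it) and is still equivalent to the perturbed source. This is handled by the usual stability lemmas -- if $p, p'$ are projections with $\|p - p'\|$ small then $p \sim p'$, and if $q$ is a positive element and $r$ a projection with $\mathrm{dist}(r, \overline{qAq})$ small then there is a genuine projection $r' \in \overline{qAq}$ with $r \sim r'$ -- but it requires stating these carefully and applying them in the right order. The equivariance conditions, by contrast, pass between the two formulations essentially for free since $\alpha_\infty$ is defined coordinatewise and $G$ is finite, so no uniformity in $g \in G$ is lost.
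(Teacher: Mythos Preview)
The paper does not actually include a proof of this theorem: it is stated as a reformulation of Phillips' definition in terms of the central sequence algebra, with no \texttt{proof} environment following it, and the subsequent results simply invoke it. Your diagonal-sequence/reindexing argument is the standard way to pass between the ``local'' $(F,\epsilon,x)$ formulation and the exact formulation in $A_\infty\cap A'$, and it is correct; in particular you have correctly isolated the only nontrivial point, namely that in the converse direction the lifted partial isometry must be perturbed so that its range projection genuinely lies in $\overline{xAx}$ rather than merely being close to it, and the stability lemmas you cite handle this.
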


We recall that the strict Rokhlin property of $\alpha: G \curvearrowright A$ even for a compact group $G$ can be rephrased as follows. 
\begin{defn}(cf. \cite{BS}, \cite{HW:Rokhlin})
Let $A$ be a separable unital $C\sp*$-algebra and $G$ a second countable, compact group. Let $\sigma : G \curvearrowright􏰏 C(G)$ denote the canonical $G$-shift, that is, $\sigma_g(f) = f(g^{-1} \cdot)$ for all $f \in C(G)$ and $g\in G$. A continuous action $\alpha : G 􏰏\curvearrowright A$ is said to have the Rokhlin property if there exists a unital equivariant $*$-homomorphism
\[(C(G), \sigma)\to (A_{\infty}\cap A' , \alpha_{\infty}).\]
\end{defn}

In the following we demonstrate the same perspective still holds in the case of tracial Rokhlin action of a finite group. 

\begin{thm}\label{T:tracialRokhlinaction2}
Let $G$ be a finite group and $A$ a separable unital infinite dimensional $C\sp*$-algebra. Then $\alpha$ has the tracial Rokhlin property if  and only if for every nonzero positive element $x$ in $A_{\infty}$ there exists an equivariant $*$-homomorphism $\phi$ from $(C(G), \sigma)$ to $(A_{\infty}\cap A', \alpha_{\infty})$ such that $1-\phi(1_{C(G)})$ is Murray-von Neumann equivalent to a projection in a hereditary $C\sp*$-subalgebra generated by $x$ in $A_{\infty}$.  
\end{thm}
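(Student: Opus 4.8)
The plan is to pass between the ``partition of unity'' formulation of the tracial Rokhlin property (Theorem \ref{T:tracialRokhlinaction}) and the ``equivariant map from $C(G)$'' formulation, exactly as in the strict case but carrying the Cuntz-subequivalence error term along. The key observation is that $C(G)$ for a finite group $G$ is just $\mathbb{C}^{|G|}$, so a unital $*$-homomorphism $\phi:C(G)\to A_{\infty}\cap A'$ is the same datum as a family $\{e_g\}_{g\in G}$ of mutually orthogonal projections in $A_{\infty}\cap A'$ summing to $\phi(1_{C(G)})$, where $e_g=\phi(\delta_g)$ and $\delta_g$ is the indicator of $\{g\}$. Under this correspondence the canonical shift $\sigma_h(\delta_g)=\delta_{hg}$ translates into $\alpha_{\infty,h}(e_g)=e_{hg}$, so $\phi$ is equivariant if and only if the $e_g$ satisfy condition (1) of Theorem \ref{T:tracialRokhlinaction}. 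Finally $1-\phi(1_{C(G)})=1-\sum_g e_g$, so the condition that $1-\phi(1_{C(G)})$ be Murray--von Neumann equivalent to a projection in $\overline{xA_\infty x}$ is precisely condition (2) of Theorem \ref{T:tracialRokhlinaction}.

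Concretely, for the ``only if'' direction I would start from Theorem \ref{T:tracialRokhlinaction}: given a nonzero positive $x\in A_\infty$, obtain the mutually orthogonal projections $e_g\in A_\infty\cap A'$ with $\alpha_{\infty,g}(e_h)=e_{gh}$ and $1-\sum_g e_g$ subequivalent to a projection in $\overline{xA_\infty x}$, then define $\phi:C(G)\to A_\infty\cap A'$ by $\phi(f)=\sum_{g\in G}f(g)e_g$. One checks $\phi$ is a $*$-homomorphism (the $e_g$ are orthogonal projections in the commutative algebra they generate), that $\phi(1_{C(G)})=\sum_g e_g$, and that $\phi\circ\sigma_h=\alpha_{\infty,h}\circ\phi$ by comparing both sides on the basis $\{\delta_g\}$. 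For the ``if'' direction, given the equivariant $\phi$, set $e_g=\phi(\delta_g)$; orthogonality and the projection property are immediate from $\delta_g\delta_h=\delta_{g,h}\delta_g$ and $\delta_g=\delta_g^*$, equivariance gives relation (1), and the hypothesis on $1-\phi(1_{C(G)})$ gives relation (2). Then Theorem \ref{T:tracialRokhlinaction} yields the tracial Rokhlin property.

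There is no serious obstacle here: the content is entirely in the already-proved Theorem \ref{T:tracialRokhlinaction}, and what remains is the routine but essential bookkeeping identifying $\mathrm{Hom}(C(G),-)$ with families of orthogonal projections and matching the $G$-shift with the index-shift $g\mapsto hg$. The only point that deserves a word of care is the direction of the shift: $\sigma_g(f)=f(g^{-1}\cdot)$ forces $\sigma_g(\delta_h)=\delta_{gh}$, so the labelling $e_g=\phi(\delta_g)$ is the one that makes equivariance of $\phi$ coincide with $\alpha_{\infty,g}(e_h)=e_{gh}$ rather than its inverse; choosing the opposite convention would merely relabel $e_g\mapsto e_{g^{-1}}$ and change nothing. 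I would also remark that the infinite-dimensionality hypothesis on $A$ is inherited solely through Theorem \ref{T:tracialRokhlinaction} (it is what makes the tracial Rokhlin property nonvacuous and the two formulations equivalent), and plays no further role in the translation.
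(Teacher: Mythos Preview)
Your proposal is correct and follows essentially the same route as the paper's proof: both directions pass through Theorem \ref{T:tracialRokhlinaction}, defining $\phi(f)=\sum_g f(g)e_g$ from the Rokhlin projections in one direction and recovering $e_g=\phi(\chi_g)$ in the other, with equivariance checked via $\sigma_g(\chi_h)=\chi_{gh}$. Your additional remarks on the shift convention and the role of infinite-dimensionality are accurate but not needed for the argument.
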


\begin{proof}
``$\Longrightarrow$'': Based on Theorem \ref{T:tracialRokhlinaction}, for any nonzero positive $x\in A_{\infty}$ we can take mutually orthogonal projections $e_g$'s in $A_{\infty}\cap A'$ such that $1-\sum e_g \precsim x$. Then we define $\phi(f)=\sum_g f(g)e_g$ for $f\in C(G)$. Then $1-\phi(1_{C(G)})=1-\sum_g e_g \precsim x$ .  Using the condition (1) in Theorem \ref{T:tracialRokhlinaction}, it is easily shown that $\phi$ is equivariant. \\
``$\Longleftarrow$'': Let $x$ be a nonzero positive element in $A_{\infty}$ and suppose that we have an equivariant $*$-homomorphism $\phi:(C(G),\sigma) \to (A_{\infty}\cap A', \alpha_{\infty})$. Let $\chi_g$ be the characteristic function on a singleton $g$. Then $e_g=\phi(\chi_{g})$ is a projection in $A_{\infty}\cap A'$ such that $e_g \perp e_h$ for $g\neq h$ and $1-\sum_g e_g=1-\phi(1_{C(G)})\precsim x$. Moreover,  $\alpha_{\infty,g}(e_h)=\alpha_{\infty, g}(\phi(\chi_h))=\phi(\sigma_g (\chi_h))=\phi(\chi_{gh})=e_{gh}$, so we are done. 
\end{proof}

\begin{lem}\label{L:firstfactorembedding}
Let $G$ be a finite group and both $C$ and $A$ unital $C\sp*$-algebras. Let $\beta:G \curvearrowright C$ and $\alpha:G \curvearrowright A$ be actions of $G$ on $C$ and $A$ respectively. Suppose that for any positive nonzero element $x$ in $A_{\infty}$ there exists an equivarinat $*$-homomorphism $\phi: (C, \beta) \to (A_{\infty}\cap A', \alpha_{\infty})$ such that  $1-\phi(1_C)$ is Murray-von Neumann equivalent to a projection in $\overline{xA_{\infty}x}$. Then $\id_A\otimes 1_C: (A,\alpha) \to (A\otimes C, \alpha\otimes \beta)$ is $G$-tracially sequentially-split. Moreover, the converse is true.
\end{lem}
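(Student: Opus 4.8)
The plan is to mimic the proof of Proposition \ref{P:firstfactorembedding} verbatim, inserting the equivariance bookkeeping at each step; in fact this lemma is the equivariant counterpart already recorded as Proposition \ref{P:firstfactorembeddingequivariant}, so one could simply invoke that. For completeness I would argue as follows.

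For the forward implication, I would first note that the multiplication map
\[ m \colon A \otimes (A_{\infty}\cap A') \longrightarrow A_{\infty}, \qquad a \otimes [(a_n)_n] \longmapsto [(a a_n)_n] \]
is a well-defined $*$-homomorphism, since the image of the constant embedding $A \hookrightarrow A_{\infty}$ commutes with $A_{\infty}\cap A'$ (the tensor norm is immaterial for the statement and, in the applications, $C$ may be taken finite dimensional). Moreover $m$ is equivariant for $\alpha\otimes\alpha_{\infty}$ on the source and $\alpha_{\infty}$ on the target, because $\alpha_{\infty,h}([(a a_n)_n]) = [(\alpha_h(a)\,\alpha_{\infty,h}(a_n))_n]$. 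Now fix a nonzero positive $x \in A_{\infty}$ and an equivariant $\phi\colon (C,\beta)\to (A_{\infty}\cap A',\alpha_{\infty})$ with $1-\phi(1_C)$ Murray--von Neumann equivalent to a projection in $\overline{xA_{\infty}x}$, and set $\psi = m\circ(\id_A\otimes\phi)\colon A\otimes C\to A_{\infty}$. Then $\psi$ is equivariant as a composite of equivariant maps, $g := \psi(1_A\otimes 1_C) = \phi(1_C) \in A_{\infty}\cap A'$, and $\psi((\id_A\otimes 1_C)(a)) = \psi(a\otimes 1_C) = a\phi(1_C) = ag = ga$ for all $a\in A$, while $1-g = 1-\phi(1_C)$ is by hypothesis Murray--von Neumann equivalent to a projection in $\overline{xA_{\infty}x}$. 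Hence $\psi$ is an equivariant tracial approximate left inverse of $\id_A\otimes 1_C$, so $\id_A\otimes 1_C$ is $G$-tracially sequentially-split.

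For the converse, given a nonzero positive $x\in A_{\infty}$ I would take an equivariant tracial approximate left inverse $\psi\colon (A\otimes C,\alpha\otimes\beta)\to (A_{\infty},\alpha_{\infty})$ with projection $g\in A_{\infty}\cap A'$ satisfying $\psi(a\otimes 1_C) = ag = ga$ and $1-g$ Murray--von Neumann equivalent to a projection in $\overline{xA_{\infty}x}$, and define $\phi\colon C\to A_{\infty}$ by $\phi(c) = \psi(1_A\otimes c)$. Exactly the chain of equalities in the proof of Proposition \ref{P:firstfactorembedding}, using $\psi(1_A\otimes 1_C)\,a = ag = ga = a\,\psi(1_A\otimes 1_C)$ repeatedly, shows $\phi(C)\subset A_{\infty}\cap A'$; equivariance of $\phi$ follows from $\phi(\beta_h(c)) = \psi(1_A\otimes\beta_h(c)) = \psi((\alpha\otimes\beta)_h(1_A\otimes c)) = \alpha_{\infty,h}(\psi(1_A\otimes c)) = \alpha_{\infty,h}(\phi(c))$; and $1-\phi(1_C) = 1-g$ has the required Cuntz/Murray--von Neumann subequivalence.

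I do not expect any real obstacle. The only points needing a line of care are the well-definedness of $m$ on the tensor product (which rests on the commuting-ranges observation, and is unproblematic since in our intended applications $C$ is finite dimensional) and the routine verification that every map in sight intertwines the relevant $G$-actions. The genuine content has already been extracted in Proposition \ref{P:firstfactorembeddingequivariant} (equivalently Proposition \ref{P:firstfactorembedding}); this lemma merely restates it with the precise terminology ``$G$-tracially sequentially-split''.
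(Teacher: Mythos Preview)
Your proposal is correct and follows exactly the approach the paper indicates: the paper's own proof simply states that it is a straightforward generalization of the nonequivariant case Proposition~\ref{P:firstfactorembedding}, and you have carried out precisely that generalization, adding the equivariance checks (for $m$, for $\psi$, and for $\phi$) at each step. There is nothing to add.
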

\begin{proof}
The proof is a straightforward generalization from the nonequivariant case Proposition \ref{P:firstfactorembedding}.
\end{proof}
\begin{cor}\label{C:tracialRokhlinviasequentiallysplitmap}
Let $G$ be a finite group and $\alpha:G \curvearrowright A$ an action of $G$ on a simple unital infinite dimensional $C\sp*$-algebra $A$. Then $\alpha$ has the tracial Rokhlin property if and only if the map $1_{C(G)}\otimes \id_A : (A, \alpha) \to (C(G)\otimes A, \sigma \otimes \alpha)$ is $G$-tracially sequentially split. 
\end{cor}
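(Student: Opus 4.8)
The plan is to deduce Corollary \ref{C:tracialRokhlinviasequentiallysplitmap} by combining the two characterizations already in hand: Theorem \ref{T:tracialRokhlinaction2}, which rephrases the tracial Rokhlin property as the existence (for every nonzero positive $x\in A_\infty$) of a unital equivariant $*$-homomorphism $\phi:(C(G),\sigma)\to(A_\infty\cap A',\alpha_\infty)$ with $1-\phi(1_{C(G)})$ Cuntz-below $x$, and Lemma \ref{L:firstfactorembedding}, which says precisely that the existence of such a family of $\phi$'s is equivalent to the first-factor embedding being $G$-tracially sequentially-split. The only genuine content to check is that the hypotheses of Lemma \ref{L:firstfactorembedding} match the conclusion of Theorem \ref{T:tracialRokhlinaction2} when one takes $C=C(G)$ with $\beta=\sigma$.

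First I would fix notation: apply Lemma \ref{L:firstfactorembedding} with $C=C(G)$, $\beta=\sigma$ the canonical $G$-shift, and the given $\alpha:G\curvearrowright A$. The lemma then asserts that the map $\id_A\otimes 1_{C(G)}:(A,\alpha)\to(A\otimes C(G),\alpha\otimes\sigma)$ is $G$-tracially sequentially-split if and only if for every nonzero positive $x\in A_\infty$ there is an equivariant $*$-homomorphism $\phi:(C(G),\sigma)\to(A_\infty\cap A',\alpha_\infty)$ with $1-\phi(1_{C(G)})$ Murray--von Neumann equivalent to a projection in $\overline{xA_\infty x}$. By Theorem \ref{T:tracialRokhlinaction2}, this latter condition is exactly the statement that $\alpha$ has the tracial Rokhlin property (here simplicity and infinite-dimensionality of $A$ are used, as in that theorem). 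Chaining the two equivalences gives the result.

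One bookkeeping point I would address explicitly: the statement is written with the map $1_{C(G)}\otimes\id_A:(A,\alpha)\to(C(G)\otimes A,\sigma\otimes\alpha)$, whereas Lemma \ref{L:firstfactorembedding} and Proposition \ref{P:firstfactorembeddingequivariant} are phrased with the embedding into the first tensor factor, $\id_A\otimes 1_C:(A,\alpha)\to(A\otimes C,\alpha\otimes\beta)$. These differ only by the canonical flip isomorphism $A\otimes C(G)\cong C(G)\otimes A$, which is equivariant for $\alpha\otimes\sigma\cong\sigma\otimes\alpha$; composing a tracial approximate left-inverse with this flip transports the $G$-tracially sequentially-split property from one to the other. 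I would note this in a single sentence so that the reader is not tripped up by the order of the tensor factors.

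I do not expect a real obstacle here, since both ingredients are already established in the excerpt; the corollary is essentially a concatenation. The one place to be slightly careful is to make sure the hypotheses under which Theorem \ref{T:tracialRokhlinaction2} was proved — $A$ separable, unital, simple, infinite-dimensional — are all carried along, which is why the corollary restates them. Accordingly I would keep the proof to two or three sentences: invoke Lemma \ref{L:firstfactorembedding} with $C=C(G)$, $\beta=\sigma$; observe (via the equivariant flip) that this concerns the map in the statement; and identify the resulting condition on $\phi$ with the tracial Rokhlin property by Theorem \ref{T:tracialRokhlinaction2}.
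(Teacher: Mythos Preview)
Your proposal is correct and is essentially identical to the paper's own proof, which simply reads ``It follows from Theorem \ref{T:tracialRokhlinaction2} and Lemma \ref{L:firstfactorembedding}.'' Your additional remark about the equivariant tensor flip is a harmless clarification that the paper leaves implicit.
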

\begin{proof}
It follows from Theorem \ref{T:tracialRokhlinaction2} and Lemma \ref{L:firstfactorembedding}.
\end{proof}

  We denote by $\phi\rtimes G$ a map from $A\rtimes_{\alpha} G$ to $B\rtimes_{\beta} G$ as a natural extension of an equivariant $*$-homomorphism $\phi:(A,\alpha) \to (B,\beta)$ where $\alpha:G \curvearrowright A$ and $\beta:G \curvearrowright B$. In the following, we denote by $\lambda^{\alpha}:G \to U(A\rtimes_{\alpha}G)$ the implementing unitary representation for the action $\alpha$ so that we write an element of $A\rtimes_{\alpha}G$ as $\sum_{g\in G}a_g\lambda^{\alpha}_g$. The embedding of $A$ into $A\rtimes_{\alpha}G$ is defined by $a \mapsto a\lambda^{\alpha}_e$ or just $a$ without confusion.  

\begin{lem}\cite[Proposition 1.12]{Phillips:tracial}\label{L:projection}
Let $A$ be an infinite dimensional simple unital $C\sp*$-algebra with the property (SP), and $\alpha:G\curvearrowright A$ be an action of a finite group $G$ on $A$ such that $A\rtimes_{\alpha}G$ is also simple.  Let $B \subset A\rtimes_{\alpha}G$ be a nonzero hereditary $C\sp*$-subalgebra. Then there exists a nonzero projection $p\in A$ which is Murray-von Neumann equivalent to a projection in $B$ in $A\rtimes_{\alpha}G$.  
\end{lem}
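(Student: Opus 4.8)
The plan is to reduce the statement, in two stages, to a single application of property (SP): first produce a nonzero positive element $a\in A$ that is Cuntz-subequivalent \emph{inside} $A\rtimes_{\alpha}G$ to the given element of $B$, and then let property (SP) of $A$ turn $a$ into the desired projection. Fix a nonzero positive $b\in B$ with $\|b\|=1$; since $B$ is hereditary it contains $\overline{b(A\rtimes_{\alpha}G)b}$, so it suffices to find a nonzero projection $p\in A$ with $p\lesssim b$ in $A\rtimes_{\alpha}G$ --- for then, by the characterization of $\lesssim$ for projections recalled in Section~2, $p$ is Murray-von Neumann equivalent to a projection of $\overline{b(A\rtimes_{\alpha}G)b}\subseteq B$. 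Moreover, once such a nonzero positive $a\in A$ with $a\lesssim b$ is available, property (SP) applied to the nonzero hereditary subalgebra $\overline{aAa}$ of $A$ yields a nonzero projection $p\in A$ with $p\lesssim a\lesssim b$. So everything reduces to finding $0\neq a\in A_{+}$ with $a\lesssim b$ in $A\rtimes_{\alpha}G$.

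Let $E\colon A\rtimes_{\alpha}G\to A$ be the canonical faithful conditional expectation, whose Watatani index is $|G|$. Since $E$ is faithful, $a_{0}:=E(b)$ is a nonzero positive element of $A$. Note that the Pimsner--Popa-type inequality $b\le|G|\,E(b)$ only yields $b\lesssim a_{0}$, which goes the \emph{wrong} way; the point is to extract something from \emph{inside} $b$. Put $\delta=\tfrac12\|a_{0}\|>0$; since $(a_{0}-\delta)_{+}\neq 0$, property (SP) gives a nonzero projection $q\in A$ in the hereditary subalgebra generated by $(a_{0}-\delta)_{+}$, and a routine spectral-theory computation shows $qa_{0}q\ge\delta q$, hence $E(qbq)=qE(b)q\ge\delta q$. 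Since $qbq$ is a cut-down of $b$, we also have $c:=qbq\lesssim b$ in $A\rtimes_{\alpha}G$, while $E(c)\ge\delta q$.

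Now comes the heart of the matter, where simplicity of $A\rtimes_{\alpha}G$ is used: a Kishimoto-type excision argument (cf.\ \cite{Phillips:tracial}) lets one cut $c$ down, inside the hereditary subalgebra $qAq$, so that it becomes arbitrarily close in norm to an element of $A$. Explicitly, writing $c=\sum_{g\in G}c_{g}\lambda^{\alpha}_{g}$ with $c_{g}\in A$ and $c_{e}=E(c)$, for every $\varepsilon>0$ one produces a positive $z\in qAq$ with $\|z\|=1$ and $\|z\,c_{g}\,\alpha_{g}(z)\|<\varepsilon$ for all $g\neq e$; then $zcz=\sum_{g\in G}z c_{g}\alpha_{g}(z)\lambda^{\alpha}_{g}$, so with $w:=E(zcz)=zE(c)z\in A_{+}$ one gets $\|zcz-w\|<|G|\varepsilon$, while $w\ge\delta z^{2}$ forces $\|w\|\ge\delta$. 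Taking $\varepsilon<\delta/|G|$ and applying R\o rdam's perturbation lemma \cite{Ro:UHF2} gives $0\neq(w-|G|\varepsilon)_{+}\lesssim zcz\lesssim c\lesssim b$ in $A\rtimes_{\alpha}G$, and $(w-|G|\varepsilon)_{+}\in A_{+}$ is the element $a$ sought in the first paragraph.

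I expect the excision step to be the only genuine obstacle: the two reductions and the manipulations with cut-downs are routine, but turning a corner of $b$ into an honest element of $A$ really does require the Kishimoto-type estimate, which is precisely where the simplicity of $A\rtimes_{\alpha}G$ has to be brought to bear, and the preparatory choice of $q$ and $\delta$ in the second paragraph is exactly what prevents the norm error introduced by that estimate from swallowing the element.
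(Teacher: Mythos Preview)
The paper does not supply its own proof of this lemma; it simply quotes \cite[Proposition~1.12]{Phillips:tracial} without argument. Your reconstruction is correct and is essentially Phillips' proof: the simplicity of $A\rtimes_{\alpha}G$ (together with that of $A$) forces each $\alpha_{g}$ with $g\neq e$ to be outer, so Kishimoto's excision lemma lets you compress $qbq$ inside $qAq$ to something within $|G|\varepsilon$ of its conditional expectation, after which R\o rdam's perturbation lemma and property~(SP) finish the job exactly as you describe.
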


\begin{defn}\label{D:tracialapproximaterepresentability}
Let $G$ be a finite abelian group and $A$ be an infinite dimensional unital separable simple $C\sp*$-algebra. We say $\alpha:G \curvearrowright A$ is tracially approximately representable if for every positive nonzero element $z$ in $A_{\infty}$, there are a projection $e$ in $A_{\infty}\cap A'$ and a  unitary representation $\omega:G \to eA_{\infty}e$ such that 
\begin{enumerate}
\item $a_g(eae)=\omega_g (eae)\omega_g^*$ in $A_{\infty}$,
\item $\alpha_{\infty, g}(\omega_h)=\omega_h$ for all $g,h\in G$,
\item $1-e$ is Murray-von Neumann equivalent to a projection $\overline{zA_{\infty}z}$.
\end{enumerate}
\end{defn}
\begin{rem}
Of course, in this case also we have a dichotomy that $A$ has the property (SP) or $\alpha$ is approximately representable.
\end{rem}

\begin{prop}\label{P:approximaterepresentabilityintermsofmaps}
Let $G$ be a finite abelian group and $A$  an infinite dimensional unital separable simple $C\sp*$-algebra. Then $\alpha:G \curvearrowright A$ is tracially approximately representable if and only if for every nonzero positive element $z$ in $A_{\infty}$ there are a projection $e\in A_{\infty}\cap A'$ and an equivariant $*$-homomorphism $\psi:(A\rtimes_{\alpha}G, \Ad \lambda^{\alpha}) \to (A_{\infty}, \alpha_{\infty})$ such that 
\begin{enumerate}
\item $\psi(\iota_A(a))=ae$ for all $a\in A$, $\psi(\lambda^{\alpha}_g)=\omega_g$, 
\item $1-\psi(1_A)=1-e$ is Murray-von Neumann equivalent to a  projection in $\overline{zA_{\infty}z}$. 
\end{enumerate}
\end{prop}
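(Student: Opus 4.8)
The plan is to establish both implications by translating between the two packagings of ``tracial approximate representability'': the concrete data $(e,\omega)$ from Definition \ref{D:tracialapproximaterepresentability}, and an equivariant $*$-homomorphism out of the crossed product $A\rtimes_\alpha G$ equipped with the inner action $\Ad\lambda^\alpha$. This is the tracial analogue of the universal property of the crossed product for covariant pairs, so the backbone is the standard correspondence between covariant representations and representations of $A\rtimes_\alpha G$, applied inside the sequence algebra $A_\infty$.

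First I would prove the forward direction. Fix a nonzero positive $z\in A_\infty$ and take the projection $e\in A_\infty\cap A'$ and the unitary representation $\omega:G\to eA_\infty e$ supplied by tracial approximate representability. Consider the map $\iota_e:A\to eA_\infty e$ given by $a\mapsto eae$; by condition (1) of Definition \ref{D:tracialapproximaterepresentability} together with the fact that $e$ is central modulo the corner (i.e. $e\in A'$ so $eae=ae=ea$), this is in fact a unital $*$-homomorphism into the corner $eA_\infty e$, and condition (1) says exactly that $(\iota_e,\omega)$ is a covariant pair for $(A,\alpha)$ with values in $eA_\infty e$. By the universal property of $A\rtimes_\alpha G$ this integrates to a $*$-homomorphism $\psi:A\rtimes_\alpha G\to eA_\infty e\subset A_\infty$ with $\psi(\iota_A(a))=eae=ae$ and $\psi(\lambda^\alpha_g)=\omega_g$. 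Then $\psi(1_{A\rtimes_\alpha G})=\psi(1_A)=e$, so $1-\psi(1_A)=1-e$ is Murray--von Neumann equivalent to a projection in $\overline{zA_\infty z}$ by condition (3), giving (2). Equivariance of $\psi$ from $(A\rtimes_\alpha G,\Ad\lambda^\alpha)$ to $(A_\infty,\alpha_\infty)$ reduces to checking it on the two sets of generators: on $\iota_A(a)$ one needs $\psi(\Ad\lambda^\alpha_g(\iota_A(a)))=\psi(\iota_A(\alpha_g(a)))=\alpha_g(a)e=\alpha_\infty{}_{,g}(ae)=\alpha_\infty{}_{,g}(\psi(\iota_A(a)))$, using that $e\in A_\infty\cap A'$ is fixed by $\alpha_\infty$ (which follows from condition (2) applied suitably, or directly since any Rokhlin-type projection here is central and the averaging argument forces $\alpha_\infty(e)=e$); on $\lambda^\alpha_g$ one needs $\psi(\Ad\lambda^\alpha_g(\lambda^\alpha_h))=\psi(\lambda^\alpha_h)=\omega_h=\alpha_\infty{}_{,g}(\omega_h)$, which is precisely condition (2) of Definition \ref{D:tracialapproximaterepresentability}.

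For the converse, given a nonzero positive $z\in A_\infty$, a projection $e\in A_\infty\cap A'$, and an equivariant $\psi:(A\rtimes_\alpha G,\Ad\lambda^\alpha)\to(A_\infty,\alpha_\infty)$ satisfying (1) and (2), I would simply read off the data: set $\omega_g=\psi(\lambda^\alpha_g)$. Since the $\lambda^\alpha_g$ are unitaries in $A\rtimes_\alpha G$ and $\psi(1_A)=e$, the $\omega_g$ are unitaries in $eA_\infty e$ with $\omega_g\omega_h=\omega_{gh}$, i.e. $\omega:G\to eA_\infty e$ is a unitary representation. The covariance identity $\lambda^\alpha_g\,\iota_A(a)\,(\lambda^\alpha_g)^*=\iota_A(\alpha_g(a))$ inside $A\rtimes_\alpha G$, pushed through $\psi$, yields $\omega_g(ae)\omega_g^*=\alpha_g(a)e$, and since $e\in A'$ this is exactly condition (1) of Definition \ref{D:tracialapproximaterepresentability} in the form $\omega_g(eae)\omega_g^*=\alpha_g(eae)$. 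Equivariance of $\psi$ evaluated on $\lambda^\alpha_h$ (which $\Ad\lambda^\alpha_g$ fixes) gives $\omega_h=\psi(\lambda^\alpha_h)=\psi(\Ad\lambda^\alpha_g(\lambda^\alpha_h))=\alpha_\infty{}_{,g}(\omega_h)$, which is condition (2). Condition (3) is immediate from hypothesis (2) of the proposition since $1-e=1-\psi(1_A)$. This hands back exactly the data of Definition \ref{D:tracialapproximaterepresentability}.

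The only genuinely delicate point—the ``main obstacle''—is verifying that $e$ is $\alpha_\infty$-fixed (needed so that the corner $eA_\infty e$ is $\alpha_\infty$-invariant and the target of $\psi$ makes sense equivariantly), and, dually, that nothing forces $\psi$ to land outside the corner. In the forward direction this follows because $e\in A_\infty\cap A'$ together with condition (1) of Definition \ref{D:tracialapproximaterepresentability} forces $\alpha_\infty{}_{,g}(e)$ to again implement the same corner compression, and a short averaging/uniqueness argument (or invoking that $e$ arises as $(\Index E)\widehat E_\infty(\cdot)$-type element as in the strict case) pins down $\alpha_\infty{}_{,g}(e)=e$. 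The remaining verifications—that $a\mapsto eae$ is multiplicative on the corner, that the integrated form respects the inner $G$-action, and the bookkeeping with $\iota_A$ and $\lambda^\alpha_g$—are routine applications of the universal property of the crossed product and have already appeared in spirit in the proof of Proposition \ref{P:approximaterepresentable} and Theorem \ref{T:dualityofinclusion}.
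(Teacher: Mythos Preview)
Your proof is correct and follows essentially the same route as the paper's: build the integrated form of the covariant pair $(a\mapsto ae,\omega)$ in the forward direction, and read off $\omega_g=\psi(\lambda^\alpha_g)$ in the converse, checking equivariance on generators in both cases. The one place you overcomplicate matters is the $\alpha_\infty$-invariance of $e$, which you flag as the ``main obstacle'': since $\omega$ is a unitary representation into the corner $eA_\infty e$ one has $e=\omega_{e_G}$, so condition~(2) of Definition~\ref{D:tracialapproximaterepresentability} immediately gives $\alpha_{\infty,g}(e)=\alpha_{\infty,g}(\omega_{e_G})=\omega_{e_G}=e$---no averaging or uniqueness argument is needed.
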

\begin{proof}
Suppose that $\alpha:G \curvearrowright A$ is tracially approximately representable. Choose a nonzero positive element $z$ in $A_{\infty}$. Then we have a projection $e\in A_{\infty}\cap A'$ and a unitary representation $\omega:G \to eA_{\infty}e$ satisfying two conditions as in Definition \ref{D:tracialapproximaterepresentability}. We consider the map $\phi:A\ni x \to xe\in A_{\infty}$. Then it is easily checked that $(\phi, \omega)$ gives a covariant pair for $(A, \alpha)$ so that it induces a map $\psi$ from $A\rtimes_{\alpha}G$ to $A_{\infty}$ such that $\psi(\lambda^{\alpha}_g)=\omega_g$ and $\psi(\iota_A(a))=ae$ for all $a$ in $A$. Since $1-e$ is Murray-von Neumann equivalent to a projection in $\overline{zA_{\infty}z}$, so is $1-\phi(1_A)$. The equivariantness of $\psi$ follows from 
\[ \alpha_{\infty, h}\left(\psi(a\lambda_g^{\alpha})\right)=\alpha_{\infty, h}(a\omega_g)=\alpha_h(a)\omega_g=\psi(\Ad(\lambda^{\alpha}_h)(a\lambda^{\alpha}_g)).\]
Conversely, for any nonzero positive element $z$ in $A_{\infty}$ we have  a projection $e\in A_{\infty}\cap A'$ and an equivariant map $\psi:(A\rtimes_{\alpha}G, \Ad \lambda^{\alpha}) \to (A_{\infty}, \alpha_{\infty})$ such that 
\begin{enumerate}
\item $\psi(\iota_A(a))=ae$ for all $a\in A$, $\psi(\lambda^{\alpha}_g)=\omega_g$, 
\item $1-\psi(1_A)=1-e$ is Murray-von Neumann equivalent to a  projection in $\overline{zA_{\infty}z}$. 
\end{enumerate}
We put $\omega_h=\psi(\lambda^{\alpha}_h)$ for $h\in G$. Note that $\omega_h \in U(eA_{\infty}e)$. The equivariantness of $\psi$  implies that $\alpha_g(eae)=\omega_g ae \omega_g^*$ for any $a\in A$.
\end{proof}

\begin{cor}\label{C:traciallyapproxintermsofsequentiallysplit}
Let $G$ be a finite abelian group and $A$ be a unital separable simple $C\sp*$-algebra.  Then $\alpha:G \curvearrowright A$ is tracially approximately representable if and only if $\iota_A: (A, \alpha) \to (A\rtimes_{\alpha}G, \Ad(\lambda^{\alpha}))$ is $G$-tracially sequentially-split. 
\end{cor}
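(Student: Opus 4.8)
The plan is to deduce this corollary by combining Proposition \ref{P:approximaterepresentabilityintermsofmaps} with the equivariant first-factor-embedding criterion, Proposition \ref{P:firstfactorembeddingequivariant} (or equivalently Lemma \ref{L:firstfactorembedding}), after recognizing $A \rtimes_\alpha G$ as a twisted tensor product and $\iota_A$ as a first-factor embedding up to the relevant identifications. The key observation is that, for a finite abelian group $G$, one has $A \rtimes_\alpha G \cong A \otimes C^*(G) \cong A \otimes C(\widehat{G})$ as $C\sp*$-algebras, and under this identification the action $\Ad(\lambda^\alpha)$ corresponds to $\alpha \otimes \operatorname{id}$ twisted by the dual; more to the point, for the purpose of applying the criterion we only need that $\iota_A : A \to A \rtimes_\alpha G$ is, up to this isomorphism, the first factor embedding $\operatorname{id}_A \otimes 1$. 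Once that is in place, the statement becomes: $\iota_A$ is $G$-tracially sequentially-split iff for every nonzero positive $z \in A_\infty$ there is an equivariant $*$-homomorphism from the ``second factor'' into $(A_\infty \cap A', \alpha_\infty)$ whose unit complement is small — and that is precisely what Proposition \ref{P:approximaterepresentabilityintermsofmaps} unpacks tracial approximate representability into.

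First I would spell out the correspondence between covariant pairs and the two formulations. Given $\iota_A$ $G$-tracially sequentially-split, for a fixed nonzero positive $z \in A_\infty$ we obtain an equivariant approximate left inverse $\psi : (A \rtimes_\alpha G, \Ad(\lambda^\alpha)) \to (A_\infty, \alpha_\infty)$ together with a projection $g = \psi(1)$ in $A_\infty \cap A'$ with $\psi(\iota_A(a)) = ag$ and $1 - g$ Cuntz-below $z$; setting $e = g$ and $\omega_h = \psi(\lambda^\alpha_h)$ this is exactly the data in the "if and only if" of Proposition \ref{P:approximaterepresentabilityintermsofmaps}, hence $\alpha$ is tracially approximately representable. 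Conversely, starting from tracial approximate representability, Proposition \ref{P:approximaterepresentabilityintermsofmaps} hands us, for each nonzero positive $z$, a projection $e \in A_\infty \cap A'$ and an equivariant $\psi : (A \rtimes_\alpha G, \Ad(\lambda^\alpha)) \to (A_\infty, \alpha_\infty)$ with $\psi(\iota_A(a)) = ae$ and $1 - e \lesssim z$ inside $\overline{z A_\infty z}$; this $\psi$ is precisely a tracial approximate left inverse for $\iota_A$ with $g = e$, so $\iota_A$ is $G$-tracially sequentially-split. In effect the corollary is a restatement of the proposition in the language of Section 2.

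The main subtlety — and the step I expect to require the most care — is checking that the domain of $\psi$ in Proposition \ref{P:approximaterepresentabilityintermsofmaps} really is the crossed product with the action $\Ad(\lambda^\alpha)$ that appears in the definition of "$G$-tracially sequentially-split", and that the equivariance condition (2) of Definition \ref{D:tracialapproximaterepresentability}, namely $\alpha_{\infty,g}(\omega_h) = \omega_h$, is exactly what is needed for $\psi$ to be $G$-equivariant (not merely a $*$-homomorphism) and conversely is forced by equivariance of $\psi$. This is essentially the computation already performed at the end of the proof of Proposition \ref{P:approximaterepresentabilityintermsofmaps} — the identity $\alpha_{\infty,h}(\psi(a\lambda^\alpha_g)) = \alpha_h(a)\omega_g = \psi(\Ad(\lambda^\alpha_h)(a\lambda^\alpha_g))$ — so the bulk of the work has been done; the corollary's proof should therefore be short, citing Proposition \ref{P:approximaterepresentabilityintermsofmaps} and the definition of $G$-tracially sequentially-split map, and noting that $\iota_A(1_A) = e = g$ plays the role of $\psi(\phi(1))$ with the Murray--von Neumann condition matching verbatim.

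\begin{proof}
This is a reformulation of Proposition \ref{P:approximaterepresentabilityintermsofmaps} in the language of $G$-tracially sequentially-split maps. Suppose $\iota_A : (A,\alpha) \to (A\rtimes_\alpha G, \Ad(\lambda^\alpha))$ is $G$-tracially sequentially-split. Fix a nonzero positive $z \in A_\infty$. Then there are an equivariant $*$-homomorphism $\psi : (A\rtimes_\alpha G, \Ad(\lambda^\alpha)) \to (A_\infty, \alpha_\infty)$ and a projection $g \in A_\infty \cap A'$ with $\psi(\iota_A(a)) = ag$ for all $a \in A$ and $1 - g$ Murray--von Neumann equivalent to a projection in $\overline{z A_\infty z}$. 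Setting $e = g = \psi(1_A)$, this is exactly the data required by Proposition \ref{P:approximaterepresentabilityintermsofmaps}, so $\alpha$ is tracially approximately representable.

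Conversely, suppose $\alpha$ is tracially approximately representable and fix a nonzero positive $z \in A_\infty$. By Proposition \ref{P:approximaterepresentabilityintermsofmaps} there are a projection $e \in A_\infty \cap A'$ and an equivariant $*$-homomorphism $\psi : (A\rtimes_\alpha G, \Ad(\lambda^\alpha)) \to (A_\infty, \alpha_\infty)$ with $\psi(\iota_A(a)) = ae$ for all $a \in A$ and $1 - e = 1 - \psi(1_A)$ Murray--von Neumann equivalent to a projection in $\overline{z A_\infty z}$. Since $e = \psi(1_A)$ is a projection in $A_\infty \cap A'$ and $\psi(\iota_A(a)) = ae = ea$, the pair $(\psi, e)$ witnesses that $\iota_A$ is $G$-tracially sequentially-split.
\end{proof}
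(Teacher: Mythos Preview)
Your formal proof (the content inside the \texttt{proof} environment) is correct and is exactly what the paper intends: the corollary is stated in the paper with no proof at all, as an immediate reformulation of Proposition~\ref{P:approximaterepresentabilityintermsofmaps} in the language of $G$-tracially sequentially-split maps, and that is precisely what you wrote out.

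One remark on your preamble discussion: the claimed isomorphism $A \rtimes_\alpha G \cong A \otimes C^*(G)$ is false in general (it holds only when $\alpha$ is, say, inner), so the attempt to view $\iota_A$ literally as a first-factor embedding $\id_A \otimes 1$ and to invoke Proposition~\ref{P:firstfactorembeddingequivariant} cannot work as stated. Fortunately you do not use this in the actual proof; you instead rely directly on Proposition~\ref{P:approximaterepresentabilityintermsofmaps}, which is the right move and makes the tensor-product detour unnecessary. I would simply delete that paragraph from the write-up.
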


\begin{lem}\label{L:stablesequentialsplit}
Let $G$ be a finite group and $\alpha:G \curvearrowright A$ and $\beta:G \curvearrowright B$  two actions on unital separable simple infinite dimensional $C\sp*$-algebras $A$ and $B$ respectively. Then $\phi:(A, \alpha) \to (B, \beta)$ is $G$-tracially sequentially-split if and only if $\phi\otimes \id_{M_n}: (A\otimes M_n, \alpha \otimes \rho) \to (B\otimes M_n, \beta\otimes \rho)$ is $G$-tracially sequentially-split for $n=|G|$. Here, in fact, $\mc{K}(l^2(G))=M_n$.
\end{lem}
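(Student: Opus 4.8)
The plan is to follow the pattern of the strict (non-tracial) version from \cite{BS}, adding the tracial bookkeeping of \cite{LeeOsaka}. Write $M_n=\mathcal{K}(l^2(G))$ with $n=|G|$. I would first record the identifications that make both directions mechanical: since $M_n$ is finite dimensional, $(A\otimes M_n)_\infty=A_\infty\otimes M_n$ canonically, and under this identification $(\alpha\otimes\rho)_\infty=\alpha_\infty\otimes\rho$ and $(A\otimes M_n)_\infty\cap(A\otimes M_n)'=(A_\infty\cap A')\otimes 1_{M_n}$; in particular every projection in this relative commutant has the form $g\otimes 1_{M_n}$ with $g\in A_\infty\cap A'$ a projection. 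I would also use the dichotomy recalled just after the definition of property (SP): either $A$ has (SP), or $\phi$ is already a (strictly) $G$-sequentially split map, in which case $\psi\otimes\id_{M_n}$ is a strict equivariant left inverse of $\phi\otimes\id_{M_n}$, and restricting along the unital $\rho$-fixed copies (as in the ``if'' part below) gives the converse; so from now on I assume $A$, hence $M_n(A)$, has property (SP).

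For the ``if'' direction, given a nonzero positive $z\in A_\infty$ I would apply the hypothesis for $\phi\otimes\id_{M_n}$ to the nonzero element $z\otimes e_{11}$, getting an equivariant $\Psi\colon (B\otimes M_n,\beta\otimes\rho)\to((A\otimes M_n)_\infty,(\alpha\otimes\rho)_\infty)$ and a projection $g\otimes 1_{M_n}$ with $\Psi((\phi\otimes\id_{M_n})(x))=x(g\otimes 1_{M_n})$ for all $x$ and with $1-g\otimes 1_{M_n}$ Murray--von Neumann equivalent to a projection in $\overline{(z\otimes e_{11})(A\otimes M_n)_\infty(z\otimes e_{11})}=\overline{zA_\infty z}\otimes e_{11}$. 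Since $\Psi(1_B\otimes m)=(1_A\otimes m)(g\otimes 1_{M_n})=g\otimes m$, the element $\Psi(b\otimes 1_{M_n})$ lies in $(g\otimes 1_{M_n})(A_\infty\otimes M_n)(g\otimes 1_{M_n})$ and commutes with every $g\otimes m$, hence belongs to $gA_\infty g\otimes 1_{M_n}$; so $\psi(b)\otimes 1_{M_n}:=\Psi(b\otimes 1_{M_n})$ defines a $*$-homomorphism $\psi\colon B\to A_\infty$, equivariant because it is $\Psi$ restricted along the equivariant unital embeddings $b\mapsto b\otimes 1_{M_n}$ and $a\mapsto a\otimes 1_{M_n}$, with $\psi(\phi(a))=ag=ga$ and $\psi(1_B)=g$. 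Finally, writing the given equivalence as $(1_A-g)\otimes 1_{M_n}=v^*v$ with $vv^*=p\otimes e_{11}$, $p\in\overline{zA_\infty z}$, the relation $vv^*\le 1_A\otimes e_{11}$ forces $v=\sum_j v_j\otimes e_{1j}$ with $v_i^*v_j=\delta_{ij}(1_A-g)$ and $\sum_j v_jv_j^*=p$, so $v_1v_1^*\le p$ is a projection in $\overline{zA_\infty z}$ equivalent to $1_A-g$; thus $\phi$ is $G$-tracially sequentially split.

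For the ``only if'' direction, given a nonzero positive $z\in(A\otimes M_n)_\infty$ with representing sequence $(z_k)$ in $M_n(A)$ and a $\delta>0$ with $\|z_k\|\ge\delta$ for infinitely many $k$, the key step is a sublemma: there is a nonzero projection $q_0\in A_\infty$ with $q_0\otimes 1_{M_n}\lesssim z$. I would prove it entrywise: for each such $k$, using that $M_n(A)$ is simple, infinite dimensional and has (SP), pick a nonzero projection $P_k\in\overline{(z_k-\delta/2)_+M_n(A)(z_k-\delta/2)_+}$, cut $P_k$ into $n$ mutually orthogonal mutually equivalent nonzero subprojections, extract from them a nonzero projection $q_k\in A$ with $q_k\otimes 1_{M_n}\lesssim P_k$, and let $r_k\le P_k$ be a subprojection with $r_k\sim q_k\otimes 1_{M_n}$; then $q_0:=[(q_k)]$ and $R:=[(r_k)]$ (with $q_k=r_k=0$ for the remaining $k$) are projections with $R\sim q_0\otimes 1_{M_n}$, and $R\in\overline{z(A\otimes M_n)_\infty z}$ because $R=h(z)Rh(z)$ for one fixed continuous $h$ vanishing at $0$ and equal to $1$ on $[\delta/2,\infty)$ — here the uniform gap $\delta/2$ is what makes the entrywise data assemble. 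Granting the sublemma, I apply the hypothesis for $\phi$ to $q_0\in A_\infty$, obtaining an equivariant $\psi\colon(B,\beta)\to(A_\infty,\alpha_\infty)$ and a projection $g\in A_\infty\cap A'$ with $\psi(\phi(a))=ag=ga$ and $1_A-g$ equivalent to a projection $\le q_0$; then $\Psi:=\psi\otimes\id_{M_n}$ and $G:=g\otimes 1_{M_n}$ do the job, since $\Psi$ is equivariant, $G\in(A\otimes M_n)_\infty\cap(A\otimes M_n)'$, $\Psi((\phi\otimes\id_{M_n})(x))=xG$, and $1-G=(1_A-g)\otimes 1_{M_n}$ is equivalent to a projection $\le q_0\otimes 1_{M_n}\lesssim z$, hence to a projection in $\overline{z(A\otimes M_n)_\infty z}$.

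The main obstacle is exactly the sublemma in the ``only if'' direction: extracting a small homogeneous projection $q_0\otimes 1_{M_n}$ Cuntz-below an arbitrary nonzero positive element of $(A\otimes M_n)_\infty$, and doing so with a uniform gap so that the entrywise comparisons glue. This is where simplicity, property (SP) and infinite dimensionality are really used — through the standard facts that in such algebras a nonzero projection decomposes into $n$ mutually orthogonal mutually equivalent nonzero subprojections, and that Cuntz comparison of a projection with a positive element having a uniform gap is inherited by the sequence algebra (cf. the comparison lemmas of R\o{}rdam, \cite{Ro:UHF1}). The rest is a formal manipulation of the identifications listed at the start.
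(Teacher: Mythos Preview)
Your proof is correct and follows essentially the same route as the paper's: in the ``if'' direction you restrict a tracial approximate left inverse for $\phi\otimes\id_{M_n}$ (applied to $z\otimes e_{11}$) along $b\mapsto b\otimes 1_{M_n}$, and in the ``only if'' direction you first produce a projection of the form $q_0\otimes 1_{M_n}$ Cuntz-below the given $z\in (A\otimes M_n)_\infty$ and then take $\psi\otimes\id_{M_n}$. The only real difference is in that extraction step: the paper simply quotes \cite[Lemma~1.11]{Phillips:tracial} to obtain a projection $p\otimes 1$ equivalent to a projection in $\overline{z(A\otimes M_n)_\infty z}$, whereas you reprove this by an entrywise argument with a uniform spectral gap. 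Your version is longer but has the virtue of making explicit why a statement proved for the simple algebra $A\otimes M_n$ passes to the non-simple sequence algebra $(A\otimes M_n)_\infty$; conversely, citing Phillips' lemma keeps the argument short. Your ``if'' direction also spells out more carefully than the paper why $(1-g)\otimes 1_{M_n}\sim(\text{projection in }\overline{zA_\infty z}\otimes e_{11})$ forces $1-g$ to be equivalent to a projection in $\overline{zA_\infty z}$.
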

\begin{proof}
Since the strict case follows from \cite [Proposition 3.14]{BS},  we may assume that $A$ has the property (SP).  Note that  $A_{\infty}\otimes M_{n} \cong (A\otimes M_n)_{\infty}$ by the map $[(a_n)_n]\otimes e_{ij} \mapsto [(a_n\otimes e_{ij})_n]$. Suppose that $\phi:(A, \alpha) \to (B, \beta)$ is $G$-tracially sequentially-split.  Consider a nonzero positive element $z$ in $(A\otimes M_n)_{\infty}$. 
Since $(A\otimes M_n)$ is also simple and has the property (SP), there is a projection $p'$ of the form $p\otimes 1$ in $A_{\infty}\otimes M_n$ such that $p'$ is Murray-von Neumann equivalent to a projection in $\overline{z(A\otimes M_n)_{\infty}z}$ (see \cite[Lemma 1.11]{Phillips:tracial}). Then we take a tracial approximate left inverse $\psi:(B,\beta) \to (A_{\infty}, \alpha_{\infty})$ and a projection $e\in A_{\infty}\cap A'$ such that
\begin{enumerate}
\item $\psi(\phi(a))=ae$, 
\item $1-e$ is Murray-von Neumann equivalent to a projection $\overline{p'A_{\infty}p'}$.
\end{enumerate}  

Then $1_{A_{\infty}\otimes M_n} - (e\otimes 1)=(1-e)\otimes 1 \lesssim p' \lesssim z$.  Note that $e \otimes 1$ is in $(A\otimes M_n)_{\infty}\cap (A\otimes M_n)'$ and $1_{A_{\infty}\otimes M_n}- \psi\otimes \id_{M_n}(1)$ is Murray-von Neumann equivalent a projection in $\overline{z(A\otimes M_n)_{\infty}z}$. Also, 
\[ \psi\otimes \id_{M_n}(\phi\otimes \id_{M_n}(a\otimes e_{ij}))=ae\otimes e_{ij}=(a\otimes e_{ij})(e\otimes 1). \] So $\phi\otimes \id_{M_n}$ is $G$-tracially sequentially-split. \\
Conversely, suppose that $\phi \otimes \id_{M_n}$ is $G$-tracially sequentially-split. Take any nonzero positive element $z\in A_{\infty}$, and consider a tracial approximate left inverse $\widetilde{\psi}$ corresponding to $z\otimes e_{11}$. Then we define $\psi:B \to A_{\infty}$ by the restriction of $\widetilde{\psi}$ to $B\otimes 1$. Since $(A_{\infty}\otimes M_n) \cap (A\otimes M_n)' \subset (A_{\infty}\otimes M_n) \cap (1\otimes M_n)'= A_{\infty} \otimes 1$, $(A_{\infty}\otimes M_n) \cap (A\otimes M_n)' =(A_{\infty}\cap A')\otimes 1$. It follows that $\widetilde{\psi}(1)= g\otimes 1 $ where $g\in A_{\infty}\cap A'$. Since $1-\widetilde{\psi}(1)=(1-g)\otimes 1$ is Murray-von Neumann equivalent to a projection in $\overline{zA_{\infty}z}\otimes e_{11}M_ne_{11}$, $1-g$ is Murray-von Neumann equivalent to a projection in $\overline{zA_{\infty}z}$. Also, we see that by viewing $A=A\otimes 1$
\[\psi(\phi(a))=\psi(\phi(a)\otimes1)=\widetilde{\psi}((\phi(a)\otimes1)=(a\otimes 1)(g\otimes1)=ag.\]
Thus, $\psi$ is a $G$-tracial approximate left inverse for $\phi$ corresponding to $z$.  
\end{proof}

We are ready to prove the following duality result for  $G$-tracially sequentially-split maps in the case of $G$ a finite abelian group as one of our main results. 
 
\begin{thm}\label{T:dualityoftraciallysequentiallysplitmap}
Let $G$ be a finite abelian group and $A$ and $B$  infinite dimensional unital separable simple $C\sp*$-algebras where $\alpha$ and $\beta$ acts on respectively.  Further we assume that $\alpha:G \curvearrowright A$ is an action  such that $A\rtimes_{\alpha}G$ is simple, in particular an outer action.  Then  the equivariant $*$-homomorphism $\phi:(A, \alpha) \to (B, \beta)$ is $G$-tracially sequentially-split if and only if $\widehat{\phi}=\phi\rtimes G:(A\rtimes_{\alpha}G, \widehat{\alpha}) \to (B\rtimes_{\beta}G, \widehat{\beta})$ is $\widehat{G}$-tracially sequentially-split. 
\end{thm}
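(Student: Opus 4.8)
The plan is to reduce the duality for $G$-tracially sequentially-split maps to the strict duality theorem of Barlak--Szab\'o (\cite[Theorem 4.27]{BS}) combined with the stabilization lemma, Lemma~\ref{L:stablesequentialsplit}, exactly mirroring the well-known proof in the strict case. The key observation is that crossing with $G$ twice returns us to the original algebra tensored with a matrix block: by Takai duality, $(A\rtimes_{\alpha}G)\rtimes_{\widehat{\alpha}}\widehat{G}\cong A\otimes\mathcal{K}(l^2(G))=A\otimes M_n$ with $n=|G|$, and under this isomorphism the double dual action $\widehat{\widehat{\alpha}}$ is conjugate to $\alpha\otimes\Ad\rho$ where $\rho$ is the regular representation of $G$ on $l^2(G)$; likewise for $\beta$. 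Moreover this isomorphism is natural in the equivariant morphism, so $\widehat{\widehat{\phi}}$ is identified with $\phi\otimes\id_{M_n}$ up to these conjugacies.

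First I would establish the ``only if'' direction. Assume $\phi:(A,\alpha)\to(B,\beta)$ is $G$-tracially sequentially-split. The first step is to check that the operation $\phi\mapsto\widehat\phi=\phi\rtimes G$ sends a tracial approximate left inverse to a tracial approximate left inverse. Concretely, given a nonzero positive $z\in (A\rtimes_\alpha G)_\infty$, I would use Lemma~\ref{L:projection} (the property (SP) together with simplicity of $A\rtimes_\alpha G$, which holds by the outerness hypothesis) to replace $\overline{z(A\rtimes_\alpha G)_\infty z}$ by a hereditary subalgebra containing a projection equivalent to one coming from $A_\infty$ itself; more precisely, I would pull back to a nonzero positive $x\in A_\infty$ and a tracial approximate left inverse $\psi:(B,\beta)\to(A_\infty,\alpha_\infty)$ with projection $g\in A_\infty\cap A'$ satisfying $\psi\circ\phi(a)=ag$ and $1-g\lesssim x$. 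Then I form $\psi\rtimes G:(B\rtimes_\beta G,\widehat\beta)\to((A_\infty)\rtimes_{\alpha_\infty}G,\ldots)$, compose with the natural embedding $(A_\infty)\rtimes_{\alpha_\infty}G\hookrightarrow (A\rtimes_\alpha G)_\infty$, and check: (i) equivariance with respect to the dual actions $\widehat\beta$ and $\widehat{\alpha}$ (an immediate diagram chase, since dualizing is functorial and $\psi$ was equivariant); (ii) the left-inverse identity, which reads $(\psi\rtimes G)(\widehat\phi(a\lambda^\alpha_h))=ag\lambda^{\alpha_\infty}_h=(a\lambda^\alpha_h)\cdot(g)$ where $g\in A_\infty\cap A'\subset (A\rtimes_\alpha G)_\infty\cap (A\rtimes_\alpha G)'$ after one verifies $g$ commutes with the implementing unitaries (it does, because $g$ is $\alpha_\infty$-fixed as it is a central sequence fixed by the approximate left inverse coming from an equivariant map --- this needs a short argument, see below); (iii) the comparison $1-g\lesssim z$, which follows since $1-g\lesssim x\lesssim z$ inside $(A\rtimes_\alpha G)_\infty$. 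This gives that $\widehat\phi$ is $\widehat G$-tracially sequentially-split.

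The ``if'' direction is then formal: apply the ``only if'' direction to $\widehat\phi:(A\rtimes_\alpha G,\widehat\alpha)\to(B\rtimes_\beta G,\widehat\beta)$ --- noting that the hypotheses transfer, since $A\rtimes_\alpha G$ is simple by assumption, is infinite dimensional, separable, unital, and $(A\rtimes_\alpha G)\rtimes_{\widehat\alpha}\widehat G\cong A\otimes M_n$ is simple because $A$ is --- to conclude that $\widehat{\widehat\phi}$ is $G$-tracially sequentially-split. Via the Takai isomorphism this says $\phi\otimes\id_{M_n}:(A\otimes M_n,\alpha\otimes\Ad\rho)\to(B\otimes M_n,\beta\otimes\Ad\rho)$ is $G$-tracially sequentially-split; here I need that conjugating the actions by the unitary realizing the double-dual conjugacy does not affect the property, which is clear. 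Finally, Lemma~\ref{L:stablesequentialsplit} (applied with the interior action $\alpha\otimes\Ad\rho$ rather than $\alpha\otimes\rho$; but $\Ad\rho$ on $M_n$ is exactly the conjugation action appearing there, or is exterior equivalent to the relevant one) lets me descend from $\phi\otimes\id_{M_n}$ back to $\phi$, giving that $\phi$ is $G$-tracially sequentially-split.

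I expect the main obstacle to be the bookkeeping around the projection $g$: in the strict case $g=1$ and everything is clean, but here one must verify that the ``defect'' projection $g\in A_\infty\cap A'$ produced by $\psi$ remains a central, $\alpha_\infty$-invariant projection after crossing with $G$, so that it legitimately serves as the defect projection $\psi\rtimes G(1)$ in $(A\rtimes_\alpha G)_\infty\cap(A\rtimes_\alpha G)'$. This requires $\alpha_{\infty,h}(g)=g$ for all $h$; one gets this from the equivariance of $\psi$ together with $g=\psi(1_B)$, since then $\alpha_{\infty,h}(g)=\alpha_{\infty,h}(\psi(1_B))=\psi(\beta_h(1_B))=\psi(1_B)=g$. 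A secondary subtlety is ensuring Lemma~\ref{L:stablesequentialsplit} applies with the precise interior action $\Ad\rho$ that Takai duality delivers; if the statement of that lemma is phrased only for the honest matrix conjugation $\rho$ acting by $\rho_g\otimes(\cdot)$, one should first note $\Ad\rho$ on $M_{|G|}$ is the inner action implemented by the regular representation, hence exterior equivalent to (indeed, for finite $G$, unitarily conjugate to a direct sum of copies of) the action in the lemma, and the $G$-tracially-sequentially-split property is invariant under exterior equivalence --- a fact I would record as a short preliminary remark if it is not already available in the excerpt.
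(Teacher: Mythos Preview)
Your proposal is correct and follows essentially the same route as the paper: handle the strict case via \cite[Proposition 3.14]{BS}, and in the (SP) case pull the positive element $z\in(A\rtimes_\alpha G)_\infty$ back to a projection in $A_\infty$ via Lemma~\ref{L:projection}, cross the tracial approximate left inverse with $G$ using the $\alpha_\infty$-invariance of $g=\psi(1_B)$, and for the converse pass through Takai duality to $\phi\otimes\id_{M_n}$ and descend by Lemma~\ref{L:stablesequentialsplit}. Your worry about $\Ad\rho$ versus $\rho$ is unnecessary---in the paper's notation $\rho$ already denotes the $G$-action on $M_n=\mathcal{K}(l^2(G))$ induced by the right-regular representation, which is exactly what Takai duality produces.
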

\begin{proof}
Since the strict case follows from \cite [Proposition 3.14]{BS},  we may assume that $A$ has the property (SP). Suppose that an equivariant map $\phi:(A, \alpha) \to (B, \beta)$ is $G$-tracially sequentially-split and consider a nonzero positive element $z$ in $(A\rtimes_{\alpha}G)_{\infty}$. By Lemma \ref{L:projectioninsubalgebra} there is a projection $p$ in $A_{\infty}$ which is Murray-von Neumann equivalent to a projection $r$ in $\overline{z(A\rtimes_{\alpha}G)_{\infty}z}$. Then we can take a tracial approximate left inverse $\psi$ such that $1- \psi(1)$ is Murray-von Neumann equivalent to a projection in $pA_{\infty}p$. Let us denote $e$ by $\psi(1)$. Then 
\[1-e \lesssim p \sim r \in \overline{z(A\rtimes_{\alpha}G)_{\infty}z}.\] Thus $1-e$ is Murray-von Neumann equivalent to a projection in $\overline{z(A\rtimes_{\alpha}G)_{\infty}z}$.\\
From the equivariantness of $\psi$, $e$ is invariant under the action of $\alpha_{\infty}$ so that  
\[ (\psi \rtimes G)(\phi(a)\lambda^{\beta}_g)=\psi(\phi(a))\lambda_{g}^{\alpha_{\infty}}=(ae)\lambda_{g}^{\alpha_{\infty}}=
a\lambda^{\alpha_{\infty}}_g e. \]
Moreover,  via $A_{\infty} \hookrightarrow A_{\infty}\rtimes_{\alpha_{\infty}}G \hookrightarrow (A\rtimes_{\alpha}G)_{\infty}$
\[1-(\psi\rtimes G)(1)=1-\psi(1)=1-e.\]
Conversely, suppose that the equivariant map $\phi \rtimes G:(A \rtimes_{\alpha}G, \widehat{\alpha}) \to (B \rtimes_{\beta}G, \widehat{\beta})$ is $\widehat{G}$-tracially sequentially-split. Then by the above proof we know that $\widehat{\widehat{\phi}}: (A\rtimes_{\alpha}G\rtimes_{\widehat{\alpha}}\widehat{G}, \widehat{\widehat{\alpha}}) \to (B\rtimes_{\beta}G\rtimes_{\widehat{\beta}}\widehat{G}, \widehat{\widehat{\beta}}) $ is $G$-tracially sequentially-spilt. 
Note that by Takai duality \cite{T:duality} there are  equivariant isomorphisms, where $\rho$ is the $G$-action on the algebra of compact operators $\mc{K}(l^2(G))=M_n$ induced by the right-regular representation,   
\[\kappa_A: (A\rtimes_{\alpha}G\rtimes_{\widehat{\alpha}}\widehat{G}, \widehat{\widehat{\alpha}}) \cong (A\otimes M_n, \alpha\otimes \rho)\] 
\[\kappa_B:(B\rtimes_{\beta}G\rtimes_{\widehat{\beta}}\widehat{G}, \widehat{\widehat{\beta}}) \cong (B\otimes M_n, \beta\otimes \rho)\]
such that  the following commutative diagram is commutative:
\begin{equation*}\label{D:Takai}
\xymatrix{ (A\rtimes_{\alpha}G\rtimes_{\widehat{\alpha}}\widehat{G}, \widehat{\widehat{\alpha}})  \ar[d]_{\kappa_A}\ar[r]^{\widehat{\widehat{\phi}}} & (B\rtimes_{\beta}G\rtimes_{\widehat{\beta}}\widehat{G}, \widehat{\widehat{\beta}}) \ar[d]^{\kappa_B}\\
                         (A\otimes M_n, \alpha\otimes \rho)\ar[r]^{\phi\otimes \id_{M_n}} & (B\otimes M_n, \beta\otimes \rho)} 
\end{equation*}
Note that the isomorphism $\kappa_A$ induces the isomorphism denoted by $(\kappa_A)_{\infty}$ between $((A\rtimes_{\alpha}G\rtimes_{\widehat{\alpha}}\widehat{G})_{\infty}, (\widehat{\widehat{\alpha}})_{\infty}) $ and $((A\otimes M_n)_{\infty}, (\alpha\otimes \rho)_{\infty})$.
Now for a positive nonzero element $z$ in $(A\otimes M_n)_{\infty}$ consider $(\kappa_A)_{\infty}^{-1}(z)=\tilde{z}$ in $(A\rtimes_{\alpha}G\rtimes_{\widehat{\alpha}}\widehat{G})_{\infty}$. Since $\widehat{\widehat{\phi}}$ is $G$-tracially sequentially-split, we have a tracial approximate left inverse $\psi$ from $(B\rtimes_{\beta}G\rtimes_{\widehat{\beta}}\widehat{G}, \widehat{\widehat{\beta}})$ to $((A\rtimes_{\alpha}G\rtimes_{\widehat{\alpha}}\widehat{G})_{\infty}, (\widehat{\widehat{\alpha}})_{\infty})$ such that 
\begin{enumerate}
\item $\psi(\widehat{\widehat{\phi}}(x))=x g$ for $x\in A\rtimes_{\alpha}G\rtimes_{\widehat{\alpha}}\widehat{G}$ where $g=\psi(1) \in (A\rtimes_{\alpha}G\rtimes_{\widehat{\alpha}}\widehat{G})_{\infty} \cap (A\rtimes_{\alpha}G\rtimes_{\widehat{\alpha}}\widehat{G})'$,
\item $1-g$ is Murray-von Neumann equivalent to a projection $r$ in $\overline{\tilde{z}(A\rtimes_{\alpha}G\rtimes_{\widehat{\alpha}}\widehat{G})_{\infty} \tilde{z}}$.
\end{enumerate}
Now consider the map $\tilde{\psi}=(\kappa_A)_{\infty} \circ \psi \circ (\kappa_B)^{-1}$. Then it is equivariant since all three maps are. For any $a\in A$, 
\[
\begin{split}
(\tilde{\psi} \circ (\phi\otimes \id_{M_n}))(a\otimes e_{ij})&=((\kappa_A)_{\infty} \circ \psi \circ \kappa_B^{-1} \circ (\phi \otimes \id_{M_n}))(a \otimes e_{eij}) \\
&=((\kappa_A)_{\infty} \circ \psi \circ \widehat{\widehat{\phi}}\circ \kappa_A^{-1})(a\otimes e_{ij})\\
&=(\kappa_A)_{\infty}( \kappa_A^{-1}(a\otimes e_{ij})g)\\
&=(a\otimes e_{ij})((\kappa_A)_{\infty}(g)).
\end{split}
\] 
Moreover, 
$1-(\kappa_A)_{\infty}(g)=(\kappa_A)_{\infty}(1-g)$ is Murray-von Neumann equivalent to a projection $(\kappa_A)_{\infty}(r)$ in $\overline{z(A\otimes M_n)_{\infty}z}$. It follows  that $\phi \otimes \id_{M_n}$ is $G$-tracially sequentially-split. By Lemma \ref{L:stablesequentialsplit} we conclude that $\phi$ is $G$-tracially sequentially-split. 
\end{proof}

Then we provide an alternative proof based on Theorem \ref{T:dualityoftraciallysequentiallysplitmap} and Takai duality for the following result of N.C. Phillips which is a tracial version of Izumi's result. 
  
\begin{thm}[N.C.Phillips]\label{T:dualityofgroupaction}
 Let $A$ be an infinite dimensional simple separable unital C*- algebra, and let $\alpha: G \curvearrowright A$ be an action of a finite abelian group G on A such that $A\rtimes_{\alpha}G$ is also simple. Then
\begin{enumerate}
\item $\alpha$ has the tracial Rokhlin property if and only if $\widehat{\alpha}$􏰐 is tracially approximately representable.
\item $\alpha$ is tracially approximately representable if and only if $\widehat{\alpha}$􏰐 has the tracial Rokhlin property.
\end{enumerate}
\end{thm}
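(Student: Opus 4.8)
The plan is to deduce both equivalences from the duality for $G$-tracially sequentially-split maps (Theorem \ref{T:dualityoftraciallysequentiallysplitmap}), fed by the two reformulations already in hand. For an action $\gamma:H\curvearrowright D$ of a finite abelian group $H$ on an infinite dimensional simple separable unital $C\sp*$-algebra $D$: by Corollary \ref{C:traciallyapproxintermsofsequentiallysplit}, $\gamma$ is tracially approximately representable precisely when $\iota_D:(D,\gamma)\to(D\rtimes_\gamma H,\Ad\lambda^\gamma)$ is $H$-tracially sequentially-split; and by Corollary \ref{C:tracialRokhlinviasequentiallysplitmap}, $\gamma$ has the tracial Rokhlin property precisely when $1_{C(H)}\otimes\id_D:(D,\gamma)\to(C(H)\otimes D,\sigma\otimes\gamma)$ is $H$-tracially sequentially-split. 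I will prove $(2)$ first, and then obtain $(1)$ by applying $(2)$ to the dual system together with Takai duality.

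The technical core is a purely $C\sp*$-dynamical identification, which is the Takai-duality statement underlying \cite[Theorem 4.27]{BS}: for $\alpha:G\curvearrowright A$ with $G$ finite abelian there is a $\widehat G$-equivariant isomorphism carrying the crossed-product map
\[\iota_A\rtimes G:(A\rtimes_\alpha G,\widehat\alpha)\longrightarrow\big((A\rtimes_\alpha G)\rtimes_{\Ad\lambda^\alpha}G,\ \widehat{\Ad\lambda^\alpha}\big)\]
onto the first factor embedding
\[1_{C(\widehat G)}\otimes\id_{A\rtimes_\alpha G}:(A\rtimes_\alpha G,\widehat\alpha)\longrightarrow\big(C(\widehat G)\otimes(A\rtimes_\alpha G),\ \sigma\otimes\widehat\alpha\big),\]
with $\sigma$ the canonical $\widehat G$-shift on $C(\widehat G)$. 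Indeed $\Ad\lambda^\alpha$ is implemented by the genuine unitary representation $\lambda^\alpha$ already inside $A\rtimes_\alpha G$, so if $\mu_g$ denotes the canonical implementing unitaries of $(A\rtimes_\alpha G)\rtimes_{\Ad\lambda^\alpha}G$, then $w_g:=(\lambda^\alpha_g)^*\mu_g$ is a unitary representation of $G$ commuting with $A\rtimes_\alpha G$, yielding $(A\rtimes_\alpha G)\rtimes_{\Ad\lambda^\alpha}G\cong(A\rtimes_\alpha G)\otimes C\sp*(G)$; the Fourier isomorphism $C\sp*(G)\cong C(\widehat G)$ turns $\widehat{\Ad\lambda^\alpha}$ into $\id\otimes\sigma$, and one further automorphism of $C(\widehat G)\otimes(A\rtimes_\alpha G)$, absorbing $\lambda^\alpha$ into the second tensor factor, simultaneously converts $\iota_A\rtimes G$ into the first factor embedding and $\id\otimes\sigma$ into $\sigma\otimes\widehat\alpha$. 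I expect this to be the main obstacle; it is ``strict'' (no tracial or (SP) subtleties enter), but it requires tracking three successive identifications and checking that every dual action is transported correctly.

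Granting this, $(2)$ follows by chaining equivalences: $\alpha$ is tracially approximately representable if and only if $\iota_A:(A,\alpha)\to(A\rtimes_\alpha G,\Ad\lambda^\alpha)$ is $G$-tracially sequentially-split (Corollary \ref{C:traciallyapproxintermsofsequentiallysplit}); if and only if $\iota_A\rtimes G$ is $\widehat G$-tracially sequentially-split (Theorem \ref{T:dualityoftraciallysequentiallysplitmap} applied to $\iota_A$, whose domain $A$ and codomain $A\rtimes_\alpha G$ are both infinite dimensional simple separable unital and $A\rtimes_\alpha G$ is simple by hypothesis); if and only if $1_{C(\widehat G)}\otimes\id_{A\rtimes_\alpha G}$ is $\widehat G$-tracially sequentially-split (the identification above, since being tracially sequentially-split is invariant under equivariant isomorphism of the maps); if and only if $\widehat\alpha$ has the tracial Rokhlin property (Corollary \ref{C:tracialRokhlinviasequentiallysplitmap} applied to $\widehat\alpha:\widehat G\curvearrowright A\rtimes_\alpha G$, which is again infinite dimensional simple separable unital). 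This is $(2)$.

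Finally, $(1)$ I would obtain by running $(2)$ on the dual system. Applying $(2)$ to $\widehat\alpha:\widehat G\curvearrowright A\rtimes_\alpha G$ — admissible because $A\rtimes_\alpha G$ is infinite dimensional simple separable unital and, by Takai duality, $(A\rtimes_\alpha G)\rtimes_{\widehat\alpha}\widehat G\cong A\otimes M_n$, $n=|G|$, is simple — gives: $\widehat\alpha$ is tracially approximately representable if and only if $\widehat{\widehat\alpha}$ has the tracial Rokhlin property. By Takai duality \cite{T:duality} there is a $G$-equivariant isomorphism $\big((A\rtimes_\alpha G)\rtimes_{\widehat\alpha}\widehat G,\widehat{\widehat\alpha}\big)\cong(A\otimes M_n,\alpha\otimes\rho)$, with $\rho$ the $G$-action on $M_n=\mc{K}(l^2(G))$ as in the proof of Theorem \ref{T:dualityoftraciallysequentiallysplitmap}, so the latter condition reads: $\alpha\otimes\rho$ has the tracial Rokhlin property. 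By Corollary \ref{C:tracialRokhlinviasequentiallysplitmap} this is equivalent to $(1_{C(G)}\otimes\id_A)\otimes\id_{M_n}=1_{C(G)}\otimes\id_{A\otimes M_n}$ being $G$-tracially sequentially-split, which by Lemma \ref{L:stablesequentialsplit} (whose statement is already adapted to the $(M_n,\rho)$-stabilization) holds if and only if $1_{C(G)}\otimes\id_A$ is $G$-tracially sequentially-split, i.e.\ $\alpha$ has the tracial Rokhlin property. Chaining these gives that $\alpha$ has the tracial Rokhlin property if and only if $\widehat\alpha$ is tracially approximately representable, which is $(1)$. The one hypothesis-bookkeeping point is that Corollary \ref{C:tracialRokhlinviasequentiallysplitmap} and Lemma \ref{L:stablesequentialsplit} are here invoked with the non-simple codomain $C(H)\otimes(\text{simple algebra})$; this is harmless, since such a codomain is a finite direct sum of copies of a simple algebra cyclically permuted by the action and its crossed product is simple, which is all the proofs of those results actually use.
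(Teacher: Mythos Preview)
Your proof of part (2) is essentially identical to the paper's: both chain Corollary~\ref{C:traciallyapproxintermsofsequentiallysplit}, Theorem~\ref{T:dualityoftraciallysequentiallysplitmap}, the Takai-type identification of $\iota_A\rtimes G$ with $1_{C(\widehat G)}\otimes\id_{A\rtimes_\alpha G}$ (which the paper simply cites as \cite[Proposition~4.26]{BS}, while you spell out the construction via $w_g=(\lambda^\alpha_g)^*\mu_g$ and Fourier transform), and Corollary~\ref{C:tracialRokhlinviasequentiallysplitmap}.

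For part (1) you take a genuinely different route. The paper argues symmetrically to (2), using a second Takai-type identification (from \cite[Proposition~4.25]{BS}) that carries $(1_{C(G)}\otimes\id_A)\rtimes G$ onto $\iota_{A\rtimes_\alpha G}$, and then runs the same chain with Corollaries~\ref{C:tracialRokhlinviasequentiallysplitmap} and~\ref{C:traciallyapproxintermsofsequentiallysplit} swapped. You instead deduce (1) from (2) applied to $\widehat\alpha$, then unwind $\widehat{\widehat\alpha}$ via Takai duality to $\alpha\otimes\rho$ on $A\otimes M_n$ and descend to $\alpha$ using Lemma~\ref{L:stablesequentialsplit}. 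Both arguments are correct. Your approach economizes on identifications---only the one underlying (2) is needed---at the price of invoking the matrix-stabilization lemma; the paper's approach is more symmetric and avoids the detour through $A\otimes M_n$, but requires keeping track of two dual Takai-type diagrams.

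One small point: your closing remark that Lemma~\ref{L:stablesequentialsplit} is ``harmless'' for the non-simple codomain $C(G)\otimes A$ is right, but the justification you give (``its crossed product is simple, which is all the proofs \dots\ use'') overshoots. In fact the proof of Lemma~\ref{L:stablesequentialsplit} never touches the codomain $B$ at all---only simplicity and (SP) of $A$ enter---so the stated hypothesis on $B$ is simply superfluous. The paper's own proof of (1) has the same nominal mismatch when it applies Theorem~\ref{T:dualityoftraciallysequentiallysplitmap} to $1_{C(G)}\otimes\id_A$.
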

\begin{proof}
(1): Suppose that $\alpha$ has the tracial Rokhlin property. Then by Corollary \ref{C:tracialRokhlinviasequentiallysplitmap}  the map $1_{C(G)}\otimes \id_A: (A, \alpha) \to (C(G)\otimes A, \sigma\otimes \alpha)$ is $G$-tracially sequentially-split. Thus Theorem \ref{T:dualityoftraciallysequentiallysplitmap} implies that $(1_{C(G)}\otimes \id_A)\rtimes G: (A\rtimes_{\alpha}G, \widehat{\alpha}) \to ((C(G)\otimes A)\rtimes_{\sigma \otimes \alpha} G, \widehat{\sigma \otimes \alpha})$ is $\widehat{G}$-tracially sequentially-split. This means that for every nonzero positive element $z$ in $(A\rtimes_{\alpha}G)_{\infty}$ there are a projection $g$ in the sequence algebra of $A\rtimes_{\alpha}G$ and a corresponding equivariant tracial approximate inverse $\psi$ from $((C(G)\otimes A)\rtimes_{\sigma \otimes \alpha} G, \widehat{\sigma \otimes \alpha})$ to $((A\rtimes_{\alpha}G)_{\infty}, (\widehat{\alpha})_{\infty})$.

Note that we have the following commutative diagram of equivariant maps (see \cite[Proposition 4.25]{BS});
\begin{equation}\label{D:diagram1}
\xymatrix{ (A\rtimes_{\alpha}G, \widehat{\alpha}) \ar[rd]_{\iota_{A\rtimes_{\alpha}G}} \ar[rr]^{(1_{C(G)}\otimes \id_A)\rtimes G} && ((C(G)\otimes A)\rtimes_{\sigma\otimes \alpha}G, \widehat{\sigma \otimes \alpha}) &\\
                          & ((A\rtimes_{\alpha}G) \rtimes_{\widehat{\alpha}}\widehat{G}, \Ad(\lambda^{\widehat{\alpha}})) \ar[ur]_{\phi} }
\end{equation}
Here $\phi$ is an isomorphism which comes from Takai-duality. Now consider a map $\widetilde{\psi}=\psi \circ \phi$. Then for any $b\in A\rtimes_{\alpha}G$
\[ \begin {split}
(\widetilde{\psi} \circ \iota_{A\rtimes_{\alpha}G})(b)&= (\psi \circ \phi \circ \iota_{A\rtimes_{\alpha}G})(b)\\
&=(\psi \circ (1_{C(G)}\otimes \id_A)\rtimes G)(b)\\
&=bg
\end{split} \]
Moreover, 
$1- \widetilde{\psi}(1)=1-\psi(1)=1-g$ is Murray-von Neumann equivalent to a projection in $\overline{z(A\rtimes_{\alpha}G)_{\infty}z}$. Therefore, we have shown that $\iota_{A\rtimes_{\alpha}G}:(A\rtimes_{\alpha}G, \widehat{\alpha}) \to ((A\rtimes_{\alpha}G)\rtimes_{\widehat{\alpha}}\widehat{G}, \Ad ({\lambda^{\widehat{\alpha}}})$ is $\widehat{G}$-tracially sequentially-split. Then by Corollary \ref {C:traciallyapproxintermsofsequentiallysplit} $\widehat{\alpha}$ is tracially approximately representable. 

Conversely, let $B=A\rtimes_{\alpha}G$.  If $\widehat{\alpha}$ is tracially approximately representable, then $\iota_{B}:(B, \widehat{\alpha}) \to (B \rtimes_{\widehat{\alpha}}\widehat{G}, \Ad(\lambda^{\widehat{\alpha}}))$ is $\widehat{G}$-tracially sequentially-split. We also employ the 
diagram (\ref{D:diagram1}) with $\kappa=\phi^{-1}$ as follows; 
\begin{equation}\label{D:diagram2}
\xymatrix{ (A\rtimes_{\alpha}G, \widehat{\alpha}) \ar[rd]_{(1_{C(G)}\otimes \id_A)\rtimes G} \ar[rr]^{ \iota_{A\rtimes_{\alpha}G}} &&((A\rtimes_{\alpha}G) \rtimes_{\widehat{\alpha}}\widehat{G}, \Ad(\lambda^{\widehat{\alpha}}))  &\\
                          &((C(G)\otimes A)\rtimes_{\sigma\otimes \alpha}G, \widehat{\sigma \otimes \alpha})  \ar[ur]_{\kappa} }
\end{equation}
Then using the same argument as before we can show that $(1_{C(G)}\otimes \id_A)\rtimes G$ is $\widehat{G}$-tracially sequentially-split.  By Theorem \ref{T:dualityoftraciallysequentiallysplitmap}  the map $1_{C(G)}\otimes \id_A: (A, \alpha) \to (C(G)\otimes A, \sigma \otimes \alpha)$ is $G$-tracially sequentially-split. Thus $\alpha$ has the tracial Rokhlin property by Corollary \ref{C:tracialRokhlinviasequentiallysplitmap}.\\ 
(2): Suppose that $\alpha$ is the tracially approximately representable. Then we have the following diagram 
\begin{equation}\label{D:diagram4}
\xymatrix{ (A, \alpha) \ar[rd]_{\iota_A} \ar@{-->}[rr]^{ \iota} &&(A_{\infty}, \alpha_{\infty})  &\\
                          &(A\rtimes_{\alpha}G, \Ad(\lambda^{\alpha}))  \ar[ur]_{\psi} \ar@{}[u]|{\text{tracially}\circlearrowleft}}
\end{equation}
By the duality, 
 \begin{equation}\label{D:diagram5}
\xymatrix{ (A\rtimes_{\alpha}G, \widehat{\alpha}) \ar[rd]_{\iota_A \rtimes G} \ar@{-->}[rr]^{ \iota} &&((A\rtimes_{\alpha}G)_{\infty}, (\widehat{\alpha})_{\infty})  &\\
                          &((A\rtimes_{\alpha}G)\rtimes_{\Ad(\lambda^{\alpha})}G, \widehat{\Ad \lambda^{\alpha}})  \ar[ur]_{\psi \rtimes G} \ar@{}[u]|{\text{tracially}\circlearrowleft}}
\end{equation}
Then we also have the following diagram by \cite[Proposition 4.26]{BS}; 
 \begin{equation*}\label{D:diagram6}
\xymatrix{ (A\rtimes_{\alpha}G, \widehat{\alpha}) \ar[dd]_{1_{C(\widehat{G})}\otimes \id_{A\rtimes_{\alpha}G}}\ar[rd]_{\iota_A \rtimes G} \ar@{-->}[rr]^{ \iota} &&((A\rtimes_{\alpha}G)_{\infty}, (\widehat{\alpha})_{\infty})  &\\
                          &((A\rtimes_{\alpha}G)\rtimes_{\Ad(\lambda^{\alpha})}G, \widehat{\Ad \lambda^{\alpha}})  \ar[ur]_{\psi \rtimes G} \ar@{}[u]|{\text{tracially}\circlearrowleft} \\
(C(\widehat{G})\otimes(A\rtimes_{\alpha}G), \sigma\otimes \widehat{\alpha})\ar[ru]_{\phi} }
\end{equation*} 
This means that the second factor embedding $1_{C(\widehat{G})}\otimes \id_{A\rtimes_{\alpha}G}:(A\rtimes_{\alpha}G, \widehat{\alpha}) \to (C(\widehat{G}) \otimes (A\rtimes_{\alpha}G), \sigma\otimes \widehat{\alpha})$ is $\widehat{G}$-tracially sequentially-split. By Corollary \ref{C:tracialRokhlinviasequentiallysplitmap} $\widehat{\alpha}$ has the tracial Rokhlin property.\\
Conversely if $\widehat{\alpha}$ has the tracial Rokhlin property, then we have the following diagram;
 \begin{equation}\label{D:diagram7}
\xymatrix{ (A\rtimes_{\alpha}G, \widehat{\alpha}) \ar[rd]_{1_{C(\widehat{G})}\otimes \id_{A\rtimes_{\alpha}G}} \ar@{-->}[rr]^{ \iota} &&((A\rtimes_{\alpha}G)_{\infty}, (\widehat{\alpha})_{\infty})  &\\
                          &(C(\widehat{G})\otimes(A\rtimes_{\alpha}G), \sigma\otimes \widehat{\alpha})  \ar[ur]_{\psi } \ar@{}[u]|{\text{tracially}\circlearrowleft}}
\end{equation} 
Then again 
 \begin{equation*}\label{D:diagram8}
\xymatrix{ (A\rtimes_{\alpha}G, \widehat{\alpha}) \ar[dd]_{\iota_A\rtimes G} \ar[rd]_{1_{C(\widehat{G})}\otimes \id_{A\rtimes_{\alpha}G}} \ar@{-->}[rr]^{ \iota} &&((A\rtimes_{\alpha}G)_{\infty}, (\widehat{\alpha})_{\infty})  &\\
                          &(C(\widehat{G})\otimes(A\rtimes_{\alpha}G), \sigma\otimes \widehat{\alpha})  \ar[ur]_{\psi } \ar@{}[u]|{\text{tracially}\circlearrowleft}\\
((A\rtimes_{\alpha}G)\rtimes_{\Ad(\lambda^{\alpha})}G, \widehat{\Ad \lambda^{\alpha}}), \ar[ru]_{\phi^{-1}} }
\end{equation*}
This means that  the map $\iota_A \rtimes G: (A\rtimes_{\alpha}G, \widehat{\alpha}) \to ((A\rtimes_{\alpha}G)\rtimes_{\Ad(\lambda^{\alpha})}G, \widehat{\Ad \lambda^{\alpha}})$ is $\widehat{G}$-tracially sequentially-split. Then the duality implies that $\iota_A: (A, \alpha) \to (A\rtimes_{\alpha}G, \Ad(\lambda^{\alpha}))$ is $G$-tracially sequentially-split. It follows from Corollary \ref{C:traciallyapproxintermsofsequentiallysplit} that $\alpha$ is tracially approximately representable.
\end{proof}
\subsection{Inclusion of $C\sp*$-algebras}
Now we turn to consider inclusions of unital $C\sp*$-algebras.  
  \begin{defn}[Osaka and Teruya \cite{OT1}]
Let $P\subset A$ be an inclusion of unital $C\sp*$-algebras such that a conditional expectation $E:A\to P$ has a finite index. We say $E$ has the tracial Rokhlin property if for every positive element $z\in P_{\infty}$ there is a Rokhlin projection $e\in A_{\infty}\cap A$ so that 
\begin{enumerate}
\item $(\Index E) E_{\infty}(e)=g$ is a projection,
\item $1-g$ is Murray-von Neumann equivalent to a projection in the hereditary subalgebra of $A_{\infty}$ generated by $z$ in $A_{\infty}$, 
\item $A\ni x \to xe \in A_{\infty}$ is injective. 
\end{enumerate} 
\end{defn}

As we notice, the third condition is automatically satisfied when $A$ is simple. As in the case of action with the tracial Rokhlin property, if $P\subset A$ is an inclusion of $C\sp*$-algebras and a conditional expectation $E:A \to P$ of index-finite type has the tracial Rokhlin property, or shortly $P\subset A$ an inclusion with the tracial Rokhlin property, then either $A$ has property(SP) or $E$ has the Rokhlin property (see \cite[Lemma 4.3]{OT1}). Like in the strict case the following observation was obtained by the second author and T. Teruya in \cite{OT1}. 

\begin{prop}\cite[Proposition 4.6]{OT1}\label{P:EtracialRokhlin}
Let $G$ be a finite group, $\alpha$ an action of $G$ on an infinite dimensional finite simple separable unital $C^*$-algebra $A$, and E the conditional expectation defined by $\displaystyle E(a)=\frac{1}{|G|} \sum_{g} \alpha_g(a)$.  Then $\alpha$ has the tracial Rokhlin property if and only if $E$ has the tracial Rokhlin property.
\end{prop}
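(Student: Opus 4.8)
The strategy is to establish a one-to-one translation between a Rokhlin partition of unity $\{e_g\}_{g\in G}$ for the action $\alpha$ and a single Rokhlin projection $e$ for the expectation $E$, in exact parallel with the strict case (Proposition~\ref{P:ERokhlin}), while keeping careful track of the tracially small error term. Before anything else one records that under either hypothesis $\alpha$ is outer (this follows from the defining relations together with simplicity of $A$), hence saturated; by Theorem~\ref{T:saturated} $E$ is then of index-finite type, and since $A$ is simple $\Index E$ is the scalar $|G|$, so that $(\Index E)\,E_\infty(x)=\sum_{g\in G}\alpha_{\infty,g}(x)$ for every $x\in A_\infty$. We shall phrase the tracial Rokhlin property of $\alpha$ through the central sequence algebra via Theorem~\ref{T:tracialRokhlinaction}, and use the dichotomy of \cite[Lemma~4.3]{OT1}: either $A$ has property~(SP), or $E$ already has the \emph{strict} Rokhlin property.

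For ``$\alpha$ has the tracial Rokhlin property $\Rightarrow$ $E$ has the tracial Rokhlin property'', fix a nonzero positive $z\in P_\infty$, regard it inside $A_\infty$, and apply Theorem~\ref{T:tracialRokhlinaction} to produce mutually orthogonal projections $\{e_g\}_{g\in G}\subset A_\infty\cap A'$ with $\alpha_{\infty,g}(e_h)=e_{gh}$ and $1-\sum_{g}e_g\lesssim z$. Set $e:=e_{1_G}$. Then $e\in A_\infty\cap A'$ and, by equivariance, $(\Index E)\,E_\infty(e)=\sum_{g}\alpha_{\infty,g}(e_{1_G})=\sum_{g}e_{g}=:q$ is a projection, which is condition~(1); condition~(2) is immediate from $1-q=1-\sum_g e_g\lesssim z$; and condition~(3), injectivity of $A\ni x\mapsto xe$, holds since $A$ is simple and $e\neq 0$. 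Thus $e$ witnesses the tracial Rokhlin property of $E$ for $z$.

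For the converse we may assume, by the dichotomy, that $A$ has property~(SP): if instead $E$ has the strict Rokhlin property, then $\alpha$ has the strict --- hence the tracial --- Rokhlin property by Proposition~\ref{P:ERokhlin}. Fix a nonzero positive $x\in A_\infty$; since $A_\infty$ inherits property~(SP), choose a nonzero projection $q\in\overline{xA_\infty x}$. The heart of the matter is to produce a nonzero positive $z\in P_\infty$ with $z\lesssim q$ in $A_\infty$; granting this, apply the tracial Rokhlin property of $E$ to $z$ to obtain $e\in A_\infty\cap A'$ with $q_0:=(\Index E)\,E_\infty(e)=\sum_{g}\alpha_{\infty,g}(e)$ a projection and $1-q_0\lesssim z\lesssim q\lesssim x$. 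Put $e_g:=\alpha_{\infty,g}(e)$; then $e_g\in A_\infty\cap A'$ (as $\alpha_g(A)=A$) and $\alpha_{\infty,h}(e_g)=e_{hg}$, while the identity $(1-q_0)+\sum_{g}e_g=1$ exhibits $\{1-q_0\}\cup\{e_g\}_{g\in G}$ as projections summing to $1$, hence mutually orthogonal; in particular the $e_g$ are pairwise orthogonal and $1-\sum_g e_g=1-q_0\lesssim x$, so $\alpha$ has the tracial Rokhlin property by Theorem~\ref{T:tracialRokhlinaction}. The main obstacle is precisely this comparison step: transporting the small projection $q\in A_\infty$ down to a nonzero positive element of $P_\infty=(A^\alpha)_\infty$. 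One handles it by exploiting that $A^\alpha$ is a full corner of $A\rtimes_\alpha G$ --- which is simple with property~(SP), since $A$ is simple with (SP) and $\alpha$ is outer --- and moving comparison data between $A$ and $A^\alpha$, in the spirit of Lemma~\ref{L:projection} and \cite[Lemma~1.11]{Phillips:tracial}; all remaining steps are the elementary identities above.
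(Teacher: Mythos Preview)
The paper does not supply its own proof of this proposition; it is quoted verbatim from \cite[Proposition~4.6]{OT1}. So there is no in-paper argument to compare against, and your proposal should be assessed on its own merits.

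Your argument is correct and follows the natural line. The forward direction is clean: setting $e=e_{1_G}$ and observing $(\Index E)E_\infty(e)=\sum_g\alpha_{\infty,g}(e_{1_G})=\sum_g e_g$ is exactly the right translation, and the fact that projections summing to $1$ are automatically orthogonal justifies the converse construction $e_g:=\alpha_{\infty,g}(e)$. Your preliminary remark that $\Index E=|G|$ is also sound: once $E$ is of index-finite type the basic construction is $A\rtimes_\alpha G$, the dual expectation has quasi-basis $\{(\lambda_g,\lambda_g^*)\}$ with index $|G|$, and Watatani's result gives $\Index E=\Index\widehat{E}$; outerness is not strictly needed for this identity, though your derivation of outerness from either hypothesis is valid (if some $\alpha_g$ were inner then $\alpha_{\infty,g}(e)=e$, forcing $|H|\,e\le q_0\le 1$ for a nontrivial subgroup $H$, contradicting $e\neq 0$).

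The only place where you leave real work to the reader is the comparison step in the converse: producing a nonzero $z\in P_\infty$ with $z\lesssim q$ in $A_\infty$. Your appeal to Lemma~\ref{L:projection} and the corner picture $A^\alpha\cong p(A\rtimes_\alpha G)p$ is workable but indirect. The paper already contains the tool you want, namely Lemma~\ref{L:projectioninsubalgebra}: the inclusion $A^\alpha\subset A$ is of index-finite type and of finite depth (depth two for a group action), so for any nonzero projection in $A$ there is a projection in $A^\alpha$ Cuntz below it. Applying this componentwise to a projection lift of $q$ in $A_\infty$ (and noting that the resulting partial isometries are norm-one, hence assemble in $A_\infty$) gives the required $z\in P_\infty$ directly. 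Replacing your sketch by a citation of Lemma~\ref{L:projectioninsubalgebra} would make the converse fully rigorous with no additional ideas.
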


 We note that in this case $A^{\alpha}$ is strongly Morita equivalent to $A\rtimes_{\alpha}G$, thus if an approximation property is preserved by the  strong Morita equivalence,  and if the inclusion $A^{\alpha} \subset A$ of  finite index is tracially sequentially-split, then such an  approximation property can be transferred to $A\rtimes_{\alpha}G$ from $A$ when $\alpha$ has the tracial Rokhlin property.   
\begin{defn}\label{D:tracialapproximaterepresentability}
Let $P\subset A$ be an inclusion of unital $C\sp*$-algebras and $E:A \to P$ be a conditional expectation of index-finite type. A conditional expectation $E$ is said to be tracially approximately representable if for every nonzero positive element $z\in A_{\infty}$ there exist a projection $e \in P_{\infty}\cap P'$, a projection $r\in A_{\infty}\cap A'$, and a finite set $\{ u_i\} \subset A$ such that 
\begin{enumerate}
\item $eae=E(a)e$ for all $a\in A$,
\item $\sum_i u_ieu_i^*=r$, and $re=e=er$, 
\item the map $P\ni x \mapsto xe$ is injective,
\item $1-r$ is Murray-von Neumann equivalent to a projection in $\overline{zA_{\infty}z}$ in $A_{\infty}$.
\end{enumerate}  
\end{defn}
\begin{prop}
Let $P\subset A$ be an inclusion of unital $C\sp*$-algebras and $E:A \to P$ be a conditional expectation of index-finite type. Suppose that $E$ is tracially approximately representable. Then $A$ has  the property (SP) or $E$ is approximately representable.
\end{prop}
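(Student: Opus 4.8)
The plan is to establish this dichotomy exactly as the analogous dichotomies were obtained earlier in the paper, namely the one for tracially sequentially-split $*$-homomorphisms and the one for tracially approximately representable actions: show that if $A$ fails the property (SP), then running the definition of tracial approximate representability against a suitably chosen ``bad'' positive element forces the auxiliary projection $r$ to be the unit, after which the conditions of tracial approximate representability collapse to those of ordinary approximate representability.

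First I would record the elementary fact that if $A$ does not have the property (SP) then neither does $A_{\infty}$. Indeed, choosing a nonzero hereditary $C\sp*$-subalgebra $B_0\subseteq A$ with no nonzero projection, the hereditary subalgebra $(B_0)_{\infty}\subseteq A_{\infty}$ has no nonzero projection either: a nonzero projection $[(p_n)_n]$ in it would be represented by self-adjoint elements of $B_0$ with $\|p_n^2-p_n\|\to 0$, and continuous functional calculus would then produce genuine nonzero projections in $B_0$ for large $n$, a contradiction. Consequently there is a nonzero positive element $z\in A_{\infty}$ with $\overline{zA_{\infty}z}$ containing no nonzero projection.

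Next, apply the hypothesis that $E$ is tracially approximately representable to this particular $z$. This produces a projection $e\in P_{\infty}\cap P'$, a projection $r\in A_{\infty}\cap A'$, and a finite set $\{u_i\}\subset A$ satisfying conditions (1)--(4) of the definition. By condition (4), $1-r$ is Murray--von Neumann equivalent to a projection in $\overline{zA_{\infty}z}$; since this hereditary subalgebra has no nonzero projection, $1-r=0$, i.e. $r=1_{A_{\infty}}$. Then condition (2) becomes $\sum_i u_i e u_i^{*}=1$ (the relations $re=e=er$ being automatic), while conditions (1) and (3) read $eae=E(a)e$ for all $a\in A$ and $P\ni x\mapsto xe$ injective. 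These are precisely the three requirements of Definition \ref{D:approximaterepresentability}, so $E$ is approximately representable, which completes the proof.

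I do not expect a genuine obstacle here: the only step needing a word of justification is the passage from the failure of (SP) in $A$ to its failure in $A_{\infty}$, which is the same functional-calculus observation already invoked in the earlier dichotomy proposition; everything else is a direct unwinding of the two definitions with $r$ specialized to $1_{A_{\infty}}$.
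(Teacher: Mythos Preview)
Your proposal is correct and follows essentially the same argument as the paper: assuming $A$ lacks (SP), pick $z\in A_{\infty}$ whose hereditary subalgebra contains no nonzero projection, so the condition on $1-r$ forces $r=1$, whence $\sum_i u_i e u_i^{*}=1$ and the tracial definition reduces to the strict one. The only difference is that you supply the functional-calculus justification for why $A_{\infty}$ inherits the failure of (SP), which the paper simply asserts.
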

\begin{proof}
If $A$ does not have Property (SP), neither does $A_{\infty}$. Thus there is a nonzero positive element $z$ in $A_{\infty}$ such that a hereditary subalgebra of $A_{\infty}$ generated by $z$ does not have any nonzero projection.  However, by the assumption there is a projection $r\in A_{\infty}\cap A'$ such that $1-r$ is Murray-von Neumann equivalent to a projection  in $\overline{zA_{\infty}z}$. It follows that $1-r=0$. Thus $\sum_i u_i e u_i^*=1$, so $E$ is approximately representable. 
\end{proof}

We need some preparations to prove a main result of this section.
\begin{lem}\cite[Lemma 3.12]{LeeOsaka}\label{L:inclusiontechnical}
Let $p,q$ be two projections in $P_{\infty}$ and $e\in A_{\infty}\cap A'$ be a projection such that $(\Index E) E_{\infty}(e)$ is a projection in $P_{\infty}\cap P'$. If $pe=ep$ and $q\lesssim pe$ in $A_{\infty}$, then $q \lesssim p$ in $P_{\infty}$
\end{lem}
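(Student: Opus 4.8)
The plan is to move the comparison into the Watatani $C\sp*$-basic construction $B=C^*\langle A,e_P\rangle$ and to compare $q$ and $p$ there through the Jones projection. First fix a quasi-basis $\{(u_i,u_i^*)\}_{i=1}^n$ for $E$ with $u_i\in A$; it is simultaneously a quasi-basis for $E_\infty:A_\infty\to P_\infty$ and yields $\sum_i u_ie_Pu_i^*=1$ in $B$. Since $p$ and $e$ are commuting projections, $pe$ is a projection in $A_\infty$, and since $q\lesssim pe$ with $q$ and $pe$ both projections we may fix a partial isometry $v\in A_\infty$ with $v^*v=q$ and $vv^*\le pe$. I would then record the two structural facts about $e_P$ inside $B_\infty$ that drive the argument: $e_PXe_P=E_\infty(X)e_P$ for every $X\in A_\infty$, and $e_P$ commutes with $P_\infty$; the latter gives $e_PB_\infty e_P=P_\infty e_P$, so that $r\mapsto re_P$ is a $*$-isomorphism $P_\infty\to e_PB_\infty e_P$. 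Two projections $q_1,q_2\in P_\infty$ therefore satisfy $q_1\lesssim q_2$ in $P_\infty$ if and only if $q_1e_P\lesssim q_2e_P$ in $B_\infty$: one implication is immediate from the isomorphism, and for the other one notes that a partial isometry in $B_\infty$ between subprojections of $e_P$ automatically lies in the corner $e_PB_\infty e_P$. So it suffices to prove $qe_P\lesssim pe_P$ in $B_\infty$.

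Half of this is easy: $ve_P$ is a partial isometry with $(ve_P)^*(ve_P)=e_Pv^*ve_P=e_Pqe_P=qe_P$ and range projection $ve_Pv^*\le vv^*\le pe$, so $qe_P\lesssim pe$ in $B_\infty$. The content is then to pass from $pe$ to $pe_P$, and this is exactly where the two hypotheses on $e$ enter. By the Pimsner--Popa/Watatani inequality $(\Index E)E_\infty(x^*x)\ge x^*x$ applied to $x=pe$ one gets $pe\le(\Index E)E_\infty(pe)=p\cdot(\Index E)E_\infty(e)=pg$; since $g:=(\Index E)E_\infty(e)$ is a projection and the hypothesis $pe=ep$ forces $g$ to commute with $p$ (via $pE_\infty(e)=E_\infty(pe)=E_\infty(ep)=E_\infty(e)p$), the element $pg$ is a projection with $pe\le pg\le p$. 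On the other hand the element $s:=\sqrt{\Index E}\,ee_P$ satisfies $s^*s=(\Index E)e_Pee_P=(\Index E)E_\infty(e)e_P=ge_P$, a projection below $e_P$, so after compressing by $p$ (which commutes with $e$, with $e_P$, and with $g$) the element $ps$ becomes a partial isometry in $B_\infty$ from $(pg)e_P\,(\le pe_P)$ onto the projection $(\Index E)(pe)e_P(pe)\le pe$; combined with the ``basis'' identity $pe=(pe)\bigl(\sum_iu_ie_Pu_i^*\bigr)(pe)=\sum_i\bigl((pe)u_i\bigr)e_P\bigl((pe)u_i\bigr)^*$, which realizes $pe$ over $e_P$ inside $\overline{(pg)B_\infty(pg)}$, these relations are meant to deliver $pe\lesssim pe_P$ in $B_\infty$. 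Chaining $qe_P\lesssim pe\lesssim pe_P$ and transporting back through the corner isomorphism then gives $q\lesssim p$ in $P_\infty$.

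The hard part will be this last comparison $pe\lesssim pe_P$ in $B_\infty$: a trace computation shows it is tight, since $\tau(pe)=(\Index E)^{-1}\tau(pg)=\tau\bigl((pg)e_P\bigr)$ for every trace $\tau$ on $B_\infty$, but $B_\infty$ has no strict comparison in general, so one cannot invoke a comparison theorem and must instead produce the Cuntz witness explicitly out of the index data. This is the ``sub-''version of the classical fact that a Rokhlin projection for $E$ is Murray--von Neumann equivalent to the Jones projection $e_P$; the point of the hypothesis that $(\Index E)E_\infty(e)$ is a genuine projection (rather than merely a positive contraction below $1$) is precisely that it replaces the Rokhlin equality $(\Index E)E_\infty(e)=1$ and keeps that argument running for the compressed data $pe\le pg$ in place of $e\le 1$. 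I expect the bookkeeping in this step — in particular checking that the partial isometry $ps$ and the basis relation fit together compatibly with the partial isometry $v$ carrying $q$ — to be the only genuinely technical point; everything else is a direct manipulation of the defining relations of $e_P$ and of the corner isomorphism $P_\infty\cong e_PB_\infty e_P$.
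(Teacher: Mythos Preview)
The paper does not actually prove this lemma; it is quoted verbatim from \cite[Lemma 3.12]{LeeOsaka} and used as a black box, so there is no in-paper argument to compare against. Your strategy --- pass to $B=C^*\langle A,e_P\rangle$, use the corner isomorphism $P_\infty\cong e_PB_\infty e_P$, and reduce to $qe_P\lesssim pe_P$ in $B_\infty$ --- is the natural one and is almost certainly the route taken in the cited reference.

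Your reduction to ``$pe\lesssim pe_P$ in $B_\infty$'' is correct, but you have made this step look harder than it is. The hypothesis that $g=(\Index E)E_\infty(e)$ is a projection already forces $(\Index E)\,ee_Pe=e$. Indeed, with $s=\sqrt{\Index E}\,ee_P$ you computed $s^*s=ge_P$, a projection; hence $s$ is a partial isometry and $ss^*=(\Index E)ee_Pe$ is a projection dominated by $e$. Applying the faithful dual expectation gives $\widehat{E}_\infty\bigl(e-(\Index E)ee_Pe\bigr)=e-(\Index E)e(\Index E)^{-1}e=0$, so $e=(\Index E)ee_Pe$. Compressing by $p$ (which commutes with $e$, $e_P$, and $g$) then yields $(ps)^*(ps)=pge_P$ and $(ps)(ps)^*=p\bigl((\Index E)ee_Pe\bigr)p=pe$ exactly, not merely a subprojection of $pe$. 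Thus $pe\sim pge_P\le pe_P$ is witnessed by the single partial isometry $ps$, and your ``basis identity'' detour via $\sum_i(pe)u_ie_P u_i^*(pe)$ is unnecessary. Once this is in hand, your chain $qe_P\lesssim pe\lesssim pe_P$ and the corner argument finish the proof with no further bookkeeping.
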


\begin{lem} \cite[Theorem 3.13]{LeeOsaka}\label{L:errorinsmalleralgebra}
Let $P \subset A$ be inclusion of $C\sp*$-algebras of  index-finite type and $A$ separable. Suppose $E:A \to P$ has the tracial Rokhlin property.  Then for any nonzero positive element $z \in P_{\infty}$,  there exists a projection $e$ in a central sequence algebra of $A$ such that $(\Index E)E_{\infty}(e)=g$ is a projection such that $1-g$ is Murray-von Neumann equivalent to a projection in $\overline{zP_{\infty}z}$ in $P_{\infty}$.  
\end{lem}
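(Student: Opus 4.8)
The plan is to reduce to the case where $A$ has property (SP), extract from the tracial Rokhlin property a Rokhlin projection whose tracial defect is controlled inside $A_\infty$, and then transport that control into $P_\infty$ by means of Lemma \ref{L:inclusiontechnical}.

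First I would dispose of the trivial case. By the dichotomy recalled just before Proposition \ref{P:EtracialRokhlin} (that is, \cite[Lemma 4.3]{OT1}), either $E$ has the (strict) Rokhlin property or $A$ has property (SP). In the former case a genuine Rokhlin projection $e$ satisfies $(\Index E)E_\infty(e)=1=:g$, so $1-g=0$ and the statement holds vacuously. Hence I assume $A$, and therefore $P$, has property (SP). Then for the fixed nonzero positive $z\in P_\infty$ the hereditary subalgebra $\overline{zP_\infty z}$ contains a nonzero projection $q$ (a standard consequence of property (SP) passing to the sequence algebra). Since $1-g$ being Murray--von Neumann equivalent to a projection in $\overline{zP_\infty z}$ follows once $1-g\lesssim q$ in $P_\infty$, the task becomes: produce a Rokhlin projection $e$ with $g:=(\Index E)E_\infty(e)$ a projection and $1-g\lesssim q$ in $P_\infty$.

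Next I would apply the tracial Rokhlin property of $E$, together with a reindexing (diagonal sequence) argument --- legitimate because $A$ is separable --- so as to obtain a Rokhlin projection $e\in A_\infty\cap A'$ that also commutes with $q$, with $g:=(\Index E)E_\infty(e)$ a projection. Note $e\le g$ by the Pimsner--Popa inequality $(\Index E)E_\infty(e)\ge e$, and $E_\infty(qe)=qE_\infty(e)=(\Index E)^{-1}qg$ since $q\in P_\infty$, so $qe$ is a nonzero projection occupying a ``$(\Index E)^{-1}$-fraction'' of $q$. By feeding the tracial Rokhlin property a sufficiently small positive element of $\overline{qP_\infty q}$, I would squeeze the defect all the way below $qe$, that is, $1-g\lesssim qe$ in $A_\infty$; the extra smallness of the input is exactly what absorbs the fixed factor $\Index E$ separating $qe$ from $q$. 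Now $1-g$ and $q$ are projections in $P_\infty$, $e\in A_\infty\cap A'$ is a projection with $(\Index E)E_\infty(e)=g$ a projection in $P_\infty\cap P'$, $qe=eq$, and $1-g\lesssim qe$ in $A_\infty$, so Lemma \ref{L:inclusiontechnical} yields $1-g\lesssim q$ in $P_\infty$, which finishes the proof.

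The delicate point is the comparison $1-g\lesssim qe$ in $A_\infty$. The tracial Rokhlin property bounds the defect only by elements of $P_\infty$, while $qe$ lies in $A_\infty$ and not in $P_\infty$; bridging this gap requires the reindexing (so that $e$ is ``generic'' relative to $q$ and compressing $q$ by $e$ really does retain a fixed proportion of $q$) together with the uniform size estimate for Rokhlin projections coming from $(\Index E)E_\infty(e)$ being a projection. Packaging these into the single Cuntz comparison that feeds Lemma \ref{L:inclusiontechnical} is the technical heart of the argument.
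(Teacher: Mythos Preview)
The paper does not prove this lemma; it is quoted verbatim from \cite[Theorem 3.13]{LeeOsaka} with no argument given, so there is no proof here to compare against.

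Your overall strategy --- dispose of the non-(SP) case via the strict Rokhlin dichotomy, pick a nonzero projection $q\in\overline{zP_\infty z}$, and then invoke Lemma~\ref{L:inclusiontechnical} to pull the Cuntz comparison from $A_\infty$ down to $P_\infty$ --- is the right shape, and it matches how the authors themselves deploy these two lemmas elsewhere (see the converse direction in the proof of Theorem~\ref{T:tracialdualtiyforinclusion}(2)). The problem is precisely the step you flag as delicate: the inequality $1-g\lesssim qe$ in $A_\infty$.

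As written, that step is circular. You propose to feed the tracial Rokhlin property a ``sufficiently small'' $q'\in\overline{qP_\infty q}$ and conclude $1-g\lesssim qe$; but $e$ --- and hence $qe$ --- is produced \emph{from} the input $q'$, so one cannot choose $q'$ small relative to $qe$ in advance. Your phrase about ``absorbing the fixed factor $\Index E$'' reads as a tracial/strict-comparison heuristic (indeed $E_\infty(qe)=(\Index E)^{-1}qg$), but there is no strict comparison hypothesis available, so trace inequalities do not yield Cuntz subequivalence. Nothing in your sketch even guarantees $qe\neq 0$.

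The standard repair is to decouple the two roles of ``$e$''. Lemma~\ref{L:inclusiontechnical} does not require its projection $e$ to coincide with the Rokhlin projection whose defect $1-g$ you are bounding. One first fixes an auxiliary Rokhlin projection $e'$ (arranged by reindexing to commute with a chosen pair of orthogonal equivalent subprojections $q_1,q_2\le q$), and then applies the tracial Rokhlin property a \emph{second} time to force the defect $1-g$ of the new Rokhlin projection $e$ below $q_1e'$ in $A_\infty$; Lemma~\ref{L:inclusiontechnical} with the triple $(p,q,e)=(q_1,\,1-g,\,e')$ then yields $1-g\lesssim q_1\le q$ in $P_\infty$. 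This two-step structure is exactly what appears in the analogous passage of Theorem~\ref{T:tracialdualtiyforinclusion}(2). Note, however, that arranging the second input to be dominated by $q_1e'$ --- an element of $A_\infty$, not $P_\infty$, while the tracial Rokhlin property only accepts inputs from $P_\infty$ --- is itself not free and is where the real work in \cite{LeeOsaka} lies; your outline does not address this either.
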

The following lemma is crucial as an analogous result of Lemma \ref{L:projection} in the case of inclusion of $C\sp*$-algebras. 
\begin{lem}\label{L:projectioninsubalgebra}
Let $P\subset A$ be an inclusion of unital $C\sp*$-algebras and $E:A \to P$ be a conditional expectation of index-finite type and of finite depth. Suppose $A$ has the (SP)-property. Then for any nonzero projection $p \in A$ there is a projection $q$ in $P$ such that $q \lesssim p$.  Moreover, every non-zero hereditary $C^*$-subalgebra of $A$ has a projection which is Murray-von Neumann equivalent to some projection in $P$.
\end{lem}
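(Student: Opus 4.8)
The second assertion is formal once the first is known: given a nonzero hereditary $C^*$-subalgebra $D\subseteq A$, property (SP) of $A$ produces a nonzero projection $p\in D$; the first assertion yields a projection $q\in P$ with $q\lesssim p$; and, as recalled in Section~2, $q$ is then Murray--von Neumann equivalent to a projection in $\overline{pAp}\subseteq D$. I therefore concentrate on the first assertion, and --- as in Lemma~\ref{L:projection} --- I take $A$ to be simple and infinite dimensional. This hypothesis cannot be dropped: for the finite-index, finite-depth inclusion $\mathbb C\subseteq M_n$ ($n\ge 2$) the only nonzero projection of $P=\mathbb C$ is $1_{M_n}$, which is not Cuntz below a proper subprojection of $M_n$. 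Note that $P$ is then automatically infinite dimensional, a finite-index extension of a finite-dimensional $C^*$-algebra being finite dimensional.

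The plan is to transport Phillips' proof of Lemma~\ref{L:projection} from the crossed-product picture to inclusions, letting the pair $(P\subseteq A)$ play the role of $(A\subseteq A\rtimes_\alpha G)$ and a Watatani quasi-basis $\{(u_i,u_i^*)\}_{i=1}^{n}$ for $E$ --- equivalently the Jones projection $e_P$ inside $C^*\langle A,e_P\rangle$ --- play the role of the implementing unitaries. That proof has two ingredients, and I would reproduce both. The first is a Kishimoto-type commutation estimate: inside a prescribed hereditary subalgebra of the smaller algebra one produces a nonzero projection that approximately annihilates a given finite family of elements lying in the kernel of the conditional expectation. The second is a spectral-cutting step. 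Given a nonzero projection $p\in A$, faithfulness of $E$ gives $E(p)\ne 0$, and $p-E(p)$ lies in $\ker E$; applying the commutation estimate to $p-E(p)$ and to $\overline{(E(p)-\delta)_+\,P\,(E(p)-\delta)_+}\subseteq P$ yields a nonzero projection $q_0\in P$ with $\|q_0\bigl(p-E(p)\bigr)q_0\|<\varepsilon$, while, since $q_0$ lies over the part of the spectrum of $E(p)$ above $\delta$, one also has $\|q_0E(p)q_0\|\ge\delta/2$. Choosing $\varepsilon<\delta/2$, the element $q_0pq_0$ is within $\varepsilon$ of $q_0E(p)q_0$, hence by the standard comparison lemma $(q_0E(p)q_0-\varepsilon)_+\lesssim q_0pq_0\lesssim p$ in $A$, and the left-hand element is a nonzero positive element of $P$. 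Finally, property (SP) of $P$ applied to the hereditary subalgebra it generates furnishes the required projection $q\in P$ with $q\lesssim p$.

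Two inputs carry the real weight, and the finite-depth hypothesis is what should supply both. The lesser one is that $P$ inherits property (SP) from $A$: this I would obtain from the permanence of (SP) along finite-index, finite-depth inclusions, exploiting that the even stages of the Jones tower are Morita equivalent to $P$ and the odd stages to $A$. The main obstacle is the Kishimoto-type commutation estimate itself. In Phillips' proof this is precisely where the dynamical hypothesis is consumed (outerness of $\alpha$, i.e.\ simplicity of $A\rtimes_\alpha G$), and the task here is to isolate and verify the correct inclusion-theoretic substitute --- a statement to the effect that $P$ is ``large enough'' inside $A$ for such almost-commuting projections to exist --- and to check that finite depth (in the intended applications, irreducibility of the inclusion $A^\alpha\subseteq A$) guarantees it. Once that estimate is available, the remainder is a routine transcription of Phillips' argument.
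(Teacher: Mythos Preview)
Your approach is essentially the paper's own. The paper's proof is a one-line deferral: it asserts that the hypotheses imply what it calls the \emph{outer condition} in the sense of Kishimoto --- namely, for every $x\in A$ with $E(x)=0$ and every nonzero hereditary subalgebra $C$ of $P$, $\inf\{\|cxc\| : c\in C^+,\ \|c\|=1\}=0$ --- and then refers the reader to \cite[Theorem~2.1]{Osaka:(SP)-property} for the passage from that condition to the conclusion. What you have written is precisely an unpacking of that referenced argument: the Kishimoto estimate is your ``commutation estimate,'' and your spectral-cutting step with $q_0pq_0$ close to $q_0E(p)q_0$ is exactly how Theorem~2.1 proceeds. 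So the ``inclusion-theoretic substitute'' you were looking for already has a name and a home in the literature; the paper does not reprove it but simply invokes it.

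Two remarks. First, your observation about simplicity is well taken: the example $\mathbb C\subseteq M_n$ shows the outer condition can fail for a finite-index, finite-depth inclusion without further hypotheses, and indeed the paper only ever uses this lemma with $A$ simple (cf.\ the standing hypothesis in Proposition~\ref{P:tracialRokhlinpropertyviamap} and Theorem~\ref{T:tracialdualtiyforinclusion}). Second, you need not separately argue that $P$ inherits property~(SP); that is part of what \cite[Theorem~2.1]{Osaka:(SP)-property} delivers once the outer condition is in hand.
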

\begin{proof}
In fact, the assumption satisfies the outer condition in the sense of Kishimoto; for any non-zero positive element $x$ in $A$ and an arbitrary positive number $\epsilon$ there is an element $y$ in $P$ such that  
\[ \| y^*(x-E(x))y  \| < \epsilon, \quad \| y^*E(x)y  \| \ge \| E(x)\| -\epsilon. \] More precisely, we say $E:A\to P$ is \emph{outer} if for any element $x \in A$ with $E(x)=0$ and any nonzero hereditary $C\sp*$-subalgebra $C$ of $A$, 
\[\inf \{\|cxc\| \mid c\in C^{+}, \|c\|=1\}=0.\] See the proof of Theorem 2.1 in \cite{Osaka:(SP)-property} for more details. 
\end{proof}

Then we first show characterizations of  the tracial Rokhlin property and tracial approximate representability for inclusions of unital $C\sp*$-algebras as we have done in Section 3. 
\begin{prop}\label{P:tracialRokhlinpropertyviamap}
Let $P\subset A$ be an inclusion of unital $C\sp*$-algebras and $E:A \to P$ be a conditional expectation of index-finite type and of finite depth. Suppose further $A$ is simple.  Then $E$ has the tracial Rokhlin property if and only if for every nonzero positive element $z$ in $P_{\infty}$ there are a projection $e \in A_{\infty}\cap A'$ and an injective map $\beta:A \to P_{\infty}$ such that 
\begin{enumerate}
\item $ae=\beta(a)e$ for all $a\in A$,
\item $(\Index E) ee_Pe=e$,
\item$ ye=ze$ implies that $y=z$ for all $y,z \in P_{\infty}$, 
\item $1-\beta(1)$ is Murray-von Neumann equivalent to a projection in $\overline{zP_{\infty}z}$ in $P_{\infty}$. 
\end{enumerate}  
\end{prop}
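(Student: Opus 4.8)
The plan is to run the argument of the strict case (Proposition~\ref{P:Rokhlinpropertyviamap}) with two adjustments forced by the tracial setting. First, the ``tracially small'' defect must live in $P_{\infty}$ rather than in $A_{\infty}$; this is exactly the content of Lemma~\ref{L:errorinsmalleralgebra}, which I will invoke directly. Second, the identity $(\Index E)E_{\infty}(e)=1$ available in the strict case is now only $(\Index E)E_{\infty}(e)=g$ for a projection $g$, and one must carry this $g$ through the computations. The single genuinely new step is the relation $ge=e$.

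\emph{Only if.} Suppose $E$ has the tracial Rokhlin property and fix a nonzero positive $z\in P_{\infty}$. By Lemma~\ref{L:errorinsmalleralgebra} pick a projection $e\in A_{\infty}\cap A'$ with $g:=(\Index E)E_{\infty}(e)$ a projection and $1-g$ Murray--von Neumann equivalent to a projection in $\overline{zP_{\infty}z}$. Faithfulness of a conditional expectation of index-finite type passes to its sequence algebra (via the standard estimate $\|x\|^{2}\le\|\Index E\|\,\|E(x^{*}x)\|$ applied to representing sequences), so $E_{\infty}$ and $\widehat E_{\infty}$ are faithful; applying this to $(1-g)e(1-g)\ge0$, whose image under $E_{\infty}$ is $(\Index E)^{-1}(1-g)g(1-g)=0$, gives $(1-g)e=0$, i.e.\ $ge=e=eg$. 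Combining $ge=e$ with the Jones relations $e_{P}xe_{P}=E_{\infty}(x)e_{P}$ and $pe_{P}=e_{P}p$ ($p\in P_{\infty}$), a short computation shows that $q:=(\Index E)ee_{P}e$ satisfies $q^{2}=q=q^{*}$ and $q=eqe$, so $q$ is a projection with $q\le e$; since $\widehat E_{\infty}(q)=(\Index E)\,e\,\widehat E_{\infty}(e_{P})\,e=e=\widehat E_{\infty}(e)$, faithfulness of $\widehat E_{\infty}$ forces $q=e$, which is (2). Granting (2), the computation of the strict case gives $ae=(\Index E)E_{\infty}(ae)e$ for all $a\in A$, so I set $\beta(a):=(\Index E)E_{\infty}(ae)\in P_{\infty}$; then (1) holds, $\beta(1)=g$ gives (4), and $\beta$ is readily checked to be a $*$-homomorphism (linearity and $*$-preservation from centrality of $e$ in $A$, multiplicativity from $\beta(ab)e=\beta(a)\beta(b)e$ together with the injectivity of right multiplication by $e$ on the corner $\beta(1)P_{\infty}\beta(1)$, where any $y=gyg$ satisfies $y=(\Index E)E_{\infty}(ye)$). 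This last corner-injectivity is exactly condition (3), and injectivity of $\beta$ on $A$ follows from it and the simplicity of $A$.

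\emph{If.} Conversely, suppose $\beta$ and $e$ satisfy (1)--(4), and put $g:=\beta(1)$, a projection since $\beta$ is a $*$-homomorphism. As in the strict case, (2) yields $ae=(\Index E)E_{\infty}(ae)e$ for all $a\in A$; comparing with (1) gives $\big(\beta(a)-(\Index E)E_{\infty}(ae)\big)e=0$, and (3) forces $\beta(a)=(\Index E)E_{\infty}(ae)$. Setting $a=1$ shows $(\Index E)E_{\infty}(e)=g$ is a projection; $1-g=1-\beta(1)$ is Murray--von Neumann equivalent to a projection in $\overline{zP_{\infty}z}\subset\overline{zA_{\infty}z}$ by (4); and $x\mapsto xe$ is injective on $A$ (from simplicity, or from injectivity of $\beta$ via $\beta(a)=(\Index E)E_{\infty}(ae)$). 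Hence $e$ is a Rokhlin projection witnessing the tracial Rokhlin property for this $z$, and since $z$ was arbitrary, $E$ has the tracial Rokhlin property.

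The substance beyond the now-routine bookkeeping with the basic construction is the identity $ge=e$, which replaces the triviality $g=1$ of the strict case and is what makes (1)--(2) go through, together with the observation that condition (3) must be read on the corner $\beta(1)P_{\infty}\beta(1)$; relocating the tracial defect from $A_{\infty}$ into $P_{\infty}$ is accomplished entirely by Lemma~\ref{L:errorinsmalleralgebra}, so the technical heart of the tracial refinement is imported from \cite{LeeOsaka}.
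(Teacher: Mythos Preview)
Your argument tracks the paper's closely: both directions invoke Lemma~\ref{L:errorinsmalleralgebra} for the forward implication and then rerun the strict-case computations of Proposition~\ref{P:Rokhlinpropertyviamap} with the projection $g=(\Index E)E_{\infty}(e)$ in place of $1$. Your write-up is in fact considerably more detailed than the paper, which simply defines $\beta(a)=(\Index E)E_{\infty}(ae)$ and defers the remaining verifications to the strict case. Your derivation of $ge=e$ from faithfulness of $E_{\infty}$, and your proof of (2) via $\widehat E_{\infty}(q)=\widehat E_{\infty}(e)$ with $q=(\Index E)ee_Pe\le e$, fill in real content the paper leaves implicit.

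You have also put your finger on a genuine defect in the \emph{statement}: condition (3) cannot hold as written in the tracial regime, since $(1-g)e=0$ while $1-g\neq 0$. Your proposed fix---reading (3) on the corner $gP_{\infty}g$---is the correct one, and is enough to push both directions through (one checks, as you implicitly use, that $(\Index E)E_{\infty}(ae)\in gP_{\infty}g$ via $ge=eg=e$ and $P$-bilinearity of $E_{\infty}$). The paper does not flag this.

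One divergence in the converse direction deserves mention. The paper starts from a nonzero positive $z\in A_{\infty}$, invokes the (SP)/strict dichotomy, and uses Lemma~\ref{L:projectioninsubalgebra} to produce a projection $q\in P_{\infty}$ with $q\precsim z$ in $A_{\infty}$; it then applies the hypothesis to $q$ and chains $1-\beta(1)\precsim q\precsim z$. Your argument instead stays with $z\in P_{\infty}$ throughout and uses only the containment $\overline{zP_{\infty}z}\subset\overline{zA_{\infty}z}$. Given the definition of the tracial Rokhlin property \emph{as printed} in the paper (quantifying over $z\in P_{\infty}$), your shortcut suffices; but the paper's own proof, and the way the property is used elsewhere, suggest the intended quantification is over $z\in A_{\infty}$, in which case the detour through Lemma~\ref{L:projectioninsubalgebra} is not optional.
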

\begin{proof}
Suppose that $E$ has the tracial Rokhlin property.  By Lemma \ref{L:errorinsmalleralgebra}, for any nonzero positive element $z$ in $P_{\infty}$ we can take a projection $e$ in $A_{\infty}\cap A'$ such that 
\[(\Index E)E_{\infty}(e) =g \, \, \text{is a projection in} \, P_{\infty}\cap P' \]  and $1-g$ is equivalent to a projection in $ \overline{zP_{\infty}z}$.

Then we define $\beta(a)=(\Index E)E_{\infty}(ae)$.  Thus we see immediately that $1-\beta(1)=1-g$ is Murray-von Neumann equivalent to a projection in $ \overline{zP_{\infty}z}$. The other conditions are verified as in the proof of Proposition \ref{P:Rokhlinpropertyviamap}.\\
Conversely, consider a nonzero positive element $z$ in $A_{\infty}$. Since we assume $A$ has Property (SP), so does $A_{\infty}$. Then by Lemma \ref{L:projectioninsubalgebra} we have a projection $q$ in $P_{\infty}$ which is Murray-von Neumann equivalent to a projection in  the hereditary $C\sp*$-subalgebra generated by $z$ of $A_{\infty}$. Now for this $q$ there are a projection $e \in A_{\infty}\cap A'$ and an injective map $\beta:A \to P_{\infty}$ with properties (1)-(3) as above such that $1-\beta(1) \lesssim q$. By the similar arguments in  Proposition \ref{P:Rokhlinpropertyviamap}, we see that $\beta(a)=(\Index E)E_{\infty}(ae)$ and $1-(\Index E)E_{\infty}(e)=1-\beta(1) \lesssim q \lesssim z$ in $A_{\infty}$. So we are done.    
\end{proof}

\begin{prop}\label{P:tracialapproximaterepresentabilityviamap}
Let $P\subset A$ be an inclusion of unital $C\sp*$-algebras and $E:A \to P$ be a conditional expectation of index-finite type. $E$ is tracially approximately representable if and only if for every nonzero positive element $z \in A_{\infty}$ there are an injective $*$-homomorphism from $C^*\langle A, e_P \rangle$ to $A_{\infty}$ and a projection $r \in A_{\infty}\cap A'$ such that  
\begin{enumerate}
\item $\psi(x)=xr$ for any $x\in A$, 
\item $\psi(e_P) \in P_{\infty}\cap P'$ and $\psi(e_P)r=r\psi(e_P)=\psi(e_P)$,
\item $1- r$ is Murray-von Neumann equivalent to a projection in $\overline{zA_{\infty}z}$.
\end{enumerate}
\end{prop}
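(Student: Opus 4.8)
The plan is to mimic, in the tracial setting, the proof of Proposition \ref{P:approximaterepresentable}, carrying the auxiliary ``tracially small'' projection $r$ through the covariant-representation machinery of Watatani. Assume first that $E$ is tracially approximately representable, and fix a nonzero positive $z\in A_\infty$. This supplies a projection $e\in P_\infty\cap P'$, a projection $r\in A_\infty\cap A'$, and a finite set $\{u_i\}\subset A$ with $eae=E(a)e$, $\sum_i u_ieu_i^*=r$, $re=e=er$, the map $P\ni x\mapsto xe$ injective, and $1-r\lesssim z$ in $\overline{zA_\infty z}$. First I would observe that $(\iota,e)$, where $\iota:A\to A_\infty$ is the constant embedding, is a (possibly degenerate) covariant representation of the Jones datum since $eae=E(a)e$; by \cite[Proposition 2.2.11]{W:index} it induces a $*$-homomorphism $\psi:C^*\langle A,e_P\rangle\to A_\infty$ with $\psi(xe_Py)=xey$ for $x,y\in A$. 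Then $\psi(e_P)=e\in P_\infty\cap P'$, and $\psi(1_{C^*\langle A,e_P\rangle})=\psi(\sum_i u_ie_Pu_i^*)=\sum_i u_ieu_i^*=r$, so condition (1) $\psi(x)=\psi(x)\psi(1)=xr$ for $x\in A$ follows from $1_{C^*\langle A,e_P\rangle}=\sum_i u_ie_Pu_i^*$ together with $er=e$; condition (2) is $\psi(e_P)=e$, $er=re=e$; and condition (3) is exactly $1-r\lesssim z$. The remaining point is the injectivity of $\psi$: here I would argue as in Proposition \ref{P:approximaterepresentable}, using condition (3) of Definition \ref{D:tracialapproximaterepresentability} (the map $P\ni x\mapsto xe$ is injective) to conclude that a general element $\sum_j a_je_Pb_j$ killed by $\psi$ must already be zero, via the identities $ae=\sum_i u_iE(u_i^*a)e$ and the fact that $\{(u_i,u_i^*)\}$ is a quasi-basis (the latter extracted from $\sum_i u_ieu_i^*=r$ as in the Remark following Proposition \ref{P:approximaterepresentable}, noting $r$ commutes with $A$).

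Conversely, given for each nonzero positive $z\in A_\infty$ an injective $\psi:C^*\langle A,e_P\rangle\to A_\infty$ and a projection $r\in A_\infty\cap A'$ satisfying (1)--(3), I would set $e=\psi(e_P)$. Then $e$ is a projection in $P_\infty\cap P'$ by (2), and from $e_Pxe_P=E(x)e_P$ in $C^*\langle A,e_P\rangle$ for $x\in A$ we get, applying $\psi$ and using (1), $e\,(xr)\,e=E(x)e$; since $er=e$ and $e$ commutes with $P\ni x$ one rewrites this as $exe=E(x)e$, giving condition (1) of Definition \ref{D:tracialapproximaterepresentability}. If $\{(u_i,u_i^*)\}$ is any quasi-basis for $E$ then $\sum_i u_ie_Pu_i^*=1$ in $C^*\langle A,e_P\rangle$, so $\sum_i u_ieu_i^*=\psi(1)=r$, which is condition (2) together with $re=e=er$ (the latter since $e=\psi(e_P)=\psi(e_P\cdot 1)=\psi(e_P)\psi(1)=er$, and similarly $re=e$). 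For condition (3), if $x\in P_\infty$ with $xe=0$, then $xe=0$ forces $\psi(xe_P)=0$ after noting $\psi$ restricted appropriately is injective — more carefully, $pe_P=0$ in $C^*\langle A,e_P\rangle$ whenever $p\in P$ with $pe=0$, so injectivity of $\psi$ gives $pe_P=0$ hence $p=E(p)=0$; the $P_\infty$ version follows by the same argument applied in the sequence algebra using that $r$ commutes with everything. Condition (4), $1-r\lesssim z$, is just (3) of the hypothesis.

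I expect the main obstacle to be the injectivity bookkeeping, specifically making precise that $\psi$ being injective on $C^*\langle A,e_P\rangle$ yields injectivity of the maps $P\to P_\infty$, $x\mapsto xe$, and $P_\infty\to P_\infty$ as needed, since one is passing between $A$, $A_\infty$, and the basic construction and the element $r$ is only a central projection, not the identity. The cleanest route is probably to reduce everything to the strict identities of Proposition \ref{P:approximaterepresentable} by working inside the corner $rA_\infty r$: since $r\in A_\infty\cap A'$ is a projection commuting with $A$, $P$, and $e$, the map $\psi$ factors through a unital injective $*$-homomorphism into $rA_\infty r$, and one may then quote the strict case verbatim with $A_\infty$ replaced by $rA_\infty r$ and $1_{A_\infty}$ replaced by $r$, appending the Cuntz-subequivalence clause $1-r\lesssim z$ at the end. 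A minor secondary point is verifying that $\{(u_i,u_i^*)\}$ genuinely forms a quasi-basis when only $\sum_i u_ieu_i^*=r$ (rather than $=1$) is assumed; this is handled exactly as in the Remark after Proposition \ref{P:approximaterepresentable} once one works in the corner, since there $r$ plays the role of the unit and commutes with $A$.
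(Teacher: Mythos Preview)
Your approach is correct and matches the paper's: build $\psi$ from the covariant pair $(\iota,e)$ via Watatani's universal property, then read off (1)--(3); for the converse, set $e=\psi(e_P)$ and verify Definition~\ref{D:tracialapproximaterepresentability} directly.

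One organizational point is worth straightening out. In your forward direction you compute $\psi(1_{C^*\langle A,e_P\rangle})=r$ using $\sum_i u_ie_Pu_i^*=1$, which presupposes that $\{(u_i,u_i^*)\}$ is a quasi-basis; you then defer the proof of the quasi-basis property, and also fold it into the injectivity argument. This creates an apparent loop. The paper breaks it cleanly: injectivity of $\psi$ comes \emph{first}, directly from \cite[Proposition~2.2.11]{W:index} using only condition (3) of Definition~\ref{D:tracialapproximaterepresentability} (the map $P\ni x\mapsto xe$ is injective) --- no quasi-basis needed. With $\psi$ injective, one computes
\[
ae \;=\; are \;=\; \Bigl(\sum_i u_ieu_i^*\Bigr)ae \;=\; \sum_i u_iE(u_i^*a)e,
\]
so $\psi(ae_P)=\psi\bigl(\sum_i u_iE(u_i^*a)e_P\bigr)$, and injectivity of $\psi$ gives $ae_P=\sum_i u_iE(u_i^*a)e_P$ in $C^*\langle A,e_P\rangle$, hence $a=\sum_i u_iE(u_i^*a)$. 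This is exactly the quasi-basis property, from which $\sum_i u_ie_Pu_i^*=1$ and $\psi(1)=r$, $\psi(x)=xr$ follow immediately. Your proposed detour through the corner $rA_\infty r$ therefore works but is unnecessary; the degeneracy concern is a red herring since Watatani's proposition does not require nondegeneracy for existence or injectivity of $\psi$. Your converse is essentially identical to the paper's (indeed you check condition (3) of Definition~\ref{D:tracialapproximaterepresentability} explicitly, which the paper leaves implicit in the injectivity of $\psi$).
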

\begin{proof}
Suppose that $E$ is tracially approximately representable and consider a nonzero positive element $z$ in $A_{\infty}$. Since there exists a projection $e\in P_{\infty}\cap P'$ such that $eae=E(a)e$ and the map $x \mapsto xe$ is injective for $x\in P$,   the universal property of $C^*\langle A, e_P \rangle$ induces an injective $*$-homomorphism $\psi$ from $C^*\langle A, e_P \rangle$ to $A_{\infty}$, in fact the image is generated by $A$ and $e$, such that $\psi(xe_Py)=xey$. If we consider $\{u_i\}$ such that $\sum_i u_ieu_i^*=r$ is a projection in $A_{\infty}\cap A'$ and $re=e=er$, then  for any $a\in A$
\[
ae = are=a (\sum_i u_i eu_i^*)e=(\sum_i u_i e u_i^*)ae=\sum_i u_i E(u_i^*a)e. 
\]  
Therefore
\[ \psi(ae_P)=\psi(\sum_i u_i E(u_i^*a)e_P)\] which implies that $a=\sum_i u_iE(u_i^*a)$. Similarly, we can show that $a=\sum_i E(au_i)u_i^*$. It follows that $\{(u_i, u_i^*)\}$ is a quasi-basis for $E$, and thus $\sum_i u_i e_Pu_i^*=1$. Then $1- \psi(\sum_i u_i e_P u_i^*)=1-r$ is Murray-von Neumann equivalent to a projection in $\overline{zA_{\infty}z}$, so we are done.\\
Conversely, if for every nonzero positive element $z$ in $A_{\infty}$ there are an injective map from $C\langle A, e_P \rangle$ to $A_{\infty}$ and a projection $r$ in $A_{\infty}\cap A'$ satisfying three conditions in the above. We let $\psi(e_P)=e$. Then for a quasi-basis $\{(u_i,u_i^*)\}$ 
\[\psi (\sum_i ue_Pu_i^*)= (\sum u_i r e u_i^*r)=\sum_i u_i rer u_i^* =\sum_i u_i e u_i^*=r.\]  
Therefore $1-r$ is Murray-von Neumann equivalent to a projection in $\overline{zA_{\infty}z}$.
Moreover,  since $e_P a e_P=E(a)e_P$ for any $a\in A$ we have $\psi(e_Pae_P)=\psi(E(a)e_P)$. It follows that $e a  e=E(a)e$.  
\end{proof}
We now use the above technical lemmas to prove our main result; we also derive a consequence of it. 
\begin{thm}\label{T:tracialdualtiyforinclusion}
Let $P\subset A$ be an inclusion of unital $C\sp*$-algebras and $E:A \to P$ be a conditional expectation of index-finite type and of finite depth. If we denote by $B$  the basic construction for $E$, then we have a dual conditional expectation $\widehat{E}:B \to A$. Suppose that $A$ is simple. Then 
\begin{enumerate}
\item $E$ has the tracial Rokhlin property if and only if $\widehat{E}$ is tracially approximately representable.
\item $E$ is tracially approximately representable if and only if $\widehat{E}$ has the tracial Rokhlin property.
\end{enumerate}
\end{thm}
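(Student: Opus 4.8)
The plan is to imitate the proof of Theorem~\ref{T:dualityofinclusion} (the strict case) while carrying the ``tracially small'' error projections along at every step, using as the principal bookkeeping devices the map-characterizations of Proposition~\ref{P:tracialRokhlinpropertyviamap} and Proposition~\ref{P:tracialapproximaterepresentabilityviamap}. First I would dispose of a degenerate case: by the dichotomies recorded after the relevant definitions, if $A$ fails property (SP) then $E$ has the strict Rokhlin property (resp.\ is strictly approximately representable) and $\widehat E$ is strictly approximately representable (resp.\ has the strict Rokhlin property) by Theorem~\ref{T:dualityofinclusion}, which is the assertion with zero error. So I may assume $A$, hence $A_\infty$ and $B_\infty$, have property (SP); under this hypothesis Lemma~\ref{L:projectioninsubalgebra}, applied to the inclusions $A_\infty \subset B_\infty$ and $P_\infty \subset A_\infty$ (after checking that ``index-finite type'' and ``finite depth'' are inherited by $A \subset B$ and pass to the sequence algebras), lets me transport a prescribed nonzero positive test element freely among $B_\infty$, $A_\infty$ and $P_\infty$ up to Cuntz subequivalence.

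For ``$E$ has the tracial Rokhlin property $\Rightarrow$ $\widehat E$ is tracially approximately representable'': given a nonzero positive $w \in B_\infty$, reduce (via Lemma~\ref{L:projectioninsubalgebra}) to a nonzero projection $q \in P_\infty$ that is Cuntz-below a projection in $\overline{w B_\infty w}$, and feed $q$ into Lemma~\ref{L:errorinsmalleralgebra}/Proposition~\ref{P:tracialRokhlinpropertyviamap} to obtain $e \in A_\infty \cap A'$ with $(\Index E)\, e e_P e = e$, $g := (\Index E) E_\infty(e)$ a projection in $P_\infty \cap P'$, $g e = e g = e$, and $1 - g$ Murray--von Neumann equivalent to a projection in $\overline{q P_\infty q}$. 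The relation $(\Index E)\, e e_P e = e$ gives $e x e = \widehat E(x) e$ for all $x \in B$, and faithfulness of the Rokhlin projection gives injectivity of $A \ni a \mapsto a e$. Choosing a quasi-basis $\{(v_i, v_i^*)\}$ for $E$ and setting $u_i := \sqrt{\Index E}\, v_i e_P \in B$, the identity $e_P e e_P = E_\infty(e) e_P$ yields $r := \sum_i u_i e u_i^* = \sum_i v_i g e_P v_i^*$; since $g$ commutes with $P$ and hence with each $E(v_i^* v_j) \in P$, the quasi-basis relations give $r = r^* = r^2$, $r \in B_\infty \cap B'$, $r e = e r = e$, and $1 - r = \sum_i v_i (1-g) e_P v_i^*$, and one then argues that this projection is Murray--von Neumann equivalent to a projection equivalent to $1 - g$, hence to one in $\overline{w B_\infty w}$. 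This verifies Definition~\ref{D:tracialapproximaterepresentability} for $\widehat E$. For the converse, given a test element $z \in P_\infty$ for $E$, apply tracial approximate representability of $\widehat E$ to $z$ (viewed in $B_\infty$), obtain $(\tilde e, r, \{u_i\})$, and set $e := \tilde e$ and $\beta(a) := (\Index E) E_\infty(a e)$; the covariance $(\Index E)\, \tilde e e_P \tilde e = \tilde e$ together with the computations of Proposition~\ref{P:Rokhlinpropertyviamap} give conditions (1)--(3) of Proposition~\ref{P:tracialRokhlinpropertyviamap}, while $1 - \beta(1) = 1 - (\Index E) E_\infty(\tilde e)$ is shown to be subequivalent to $1 - r$, hence small.

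Part (2) is treated symmetrically, and here the ``$\Rightarrow$'' direction is the transparent one: if $E$ is tracially approximately representable with data $(e, r, \{u_i\})$ for a test element $z \in A_\infty$, then (as in the proof of Proposition~\ref{P:tracialapproximaterepresentabilityviamap}) $\{(u_i, u_i^*)\}$ is automatically a quasi-basis for $E$, and a direct computation using $e x e = E(x) e$ and $e_P e e_P = e e_P$ shows that $f := \sum_i u_i e e_P u_i^*$ is a projection in $B_\infty \cap B'$ with $f e_P = e_P f = e e_P$, $(\Index E) \widehat E_\infty(f) = r$, and $B \ni x \mapsto x f$ injective; thus $f$ is a tracial Rokhlin projection for $\widehat E$ with error $1 - r$ already sitting inside $\overline{z B_\infty z}$. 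The ``$\Leftarrow$'' direction reverses this: from a tracial Rokhlin projection $f$ for $\widehat E$, put $e := (\Index E) \widehat E_\infty(f e_P)$, verify as in \cite[Proposition 3.4]{OKT:Rokhlin} that $e \in P_\infty \cap P'$ is a covariance projection for $E$ with $P \ni x \mapsto x e$ injective, and produce $\{u_i\} \subset A$ and $r$ from a quasi-basis for $E$ with $\sum_i u_i e u_i^* = r$ and $1 - r$ Murray--von Neumann equivalent to a projection in $\overline{z A_\infty z}$.

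The main obstacle I anticipate is the error-projection bookkeeping in part (1): the tracial Rokhlin data for $E$ produces an error $1 - g$ living in $P_\infty$ that commutes only with $P$, not with $A$, so it cannot simply be pushed into $B_\infty$; proving that $r = \sum_i v_i g e_P v_i^*$ is genuinely a projection in $B_\infty \cap B'$ and, above all, that $1 - r$ is Murray--von Neumann equivalent to a projection in the prescribed hereditary subalgebra of $B_\infty$ is where the quasi-basis identities must be combined carefully with the conditional-expectation transfer lemmas (Lemmas~\ref{L:inclusiontechnical} and \ref{L:projectioninsubalgebra}). A secondary point, needed to legitimize the test-element reductions throughout, is checking that ``finite depth'' is inherited by $A \subset B$, so that $\widehat E$ and its sequence-algebra version fall under the hypotheses of Lemma~\ref{L:projectioninsubalgebra}.
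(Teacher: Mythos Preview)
Your overall strategy is the right one and largely matches the paper's, but there is a genuine gap in part~(1), forward direction. You write that $1-r=\sum_i v_i(1-g)e_Pv_i^*$ and then assert that ``this projection is Murray--von Neumann equivalent to a projection equivalent to $1-g$''. This is not correct: $1-r$ is (up to Cuntz subequivalence) $n$ copies of $(1-g)e_P$, where $n$ is the number of quasi-basis elements, not a single copy. The paper handles this by \emph{first} cutting the test projection $p\in P_\infty$ into $n$ mutually orthogonal, mutually equivalent subprojections $r_1,\dots,r_n$ (using property~(SP) in $P$) and \emph{then} choosing the Rokhlin projection $e$ so that $1-g\lesssim r_i$ for every $i$. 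The comparison $1-r\lesssim p$ is carried out in $B_\infty\otimes M_n$: writing $(1-r)\otimes e_{11}=X\bigl(\sum_k(1-g)e_P\otimes e_{kk}\bigr)X^*$ with $X=\sum_i v_i\otimes e_{1i}$ gives $(1-r)\otimes e_{11}\lesssim \sum_k(1-g)e_P\otimes e_{kk}\lesssim \sum_k r_ke_P\otimes e_{kk}\sim(\sum_k r_ke_P)\otimes e_{11}\lesssim p\otimes e_{11}$. Without the preliminary $n$-fold splitting of $p$ your argument does not close.

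There are analogous, if smaller, omissions in the converse directions. In part~(1) converse, you apply tracial approximate representability of $\widehat E$ directly to $z\in P_\infty$ and claim $1-g\lesssim 1-r$; but $1-r$ is small only in $B_\infty$, whereas the tracial Rokhlin property for $E$ requires the error to be controlled in $A_\infty$ (indeed in $P_\infty$ via Proposition~\ref{P:tracialRokhlinpropertyviamap}). The paper instead applies approximate representability of $\widehat E$ to the element $pe_Pp\in B_\infty$ (with $p\in P_\infty$ chosen below $z$), so that the resulting $1-r$ is dominated by a projection in $\overline{pe_PpB_\infty pe_Pp}$, and then uses an explicit partial-isometry computation (multiplying by $e_P$) to pull the equivalence back down to $A_\infty$. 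In part~(2) converse, the same issue arises in the opposite direction: a single application of the tracial Rokhlin property for $\widehat E$ gives $1-g$ small in $B_\infty$, not in $A_\infty$. The paper resolves this by applying the Rokhlin property \emph{twice} (first a projection $f'$ calibrated to $r_2$, then $f$ calibrated to $f'r_1$) and invoking Lemma~\ref{L:inclusiontechnical} to deduce $1-g\lesssim r_2$ in $A_\infty$. Your outline cites Lemma~\ref{L:inclusiontechnical} as a tool but does not indicate this double application, which is precisely what makes its hypotheses available.
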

\begin{proof}
(1): Assume that $E$ has the tracial Rokhlin property and $A$ has the property (SP). Let $z$ be a nonzero positive element in $B_{\infty}$. By Lemma \ref{L:projectioninasubalgebra}, there is a projection $p$ in $P_{\infty}$ which is Murray-von Neumann equivalent to a projection in $\overline{zB_{\infty}z}$. Let $\{(v_i,v_i^*)\mid i=1,\dots, n\}$ be a quasi-basis for $E$. Then we take mutually equivalent orthogonal projections $r_1,\dots, r_n$ in $pP_{\infty}p$ since $P$ has the property (SP). For one of such projections, we can take a Rokhlin projection $e \in A_{\infty}\cap A'$ such that 
$1-(\Index E)E_{\infty}(e) \lesssim r_i$ for $i=1,\dots, n$. 

Now let $u_i=\sqrt{\Index E} v_i e_P$ and $g=(\Index E)E_{\infty}(e)$. Then we can easily see that 
\[\sum_{i=1}^n u_i eu_i^*=\sum_i v_i g e_pv_i^*.\]
Thus  for any $x\in A$
\[\sum_i u_i\widehat{E}(u_i^*x)e=\sum_i (\Index E)v_ie_P\widehat{E}(e_Pv_i^*x)e=\left(\sum_i v_i e_P v_i^*\right)xe=xe.\]
Similarly, we can show that $\sum_ie\widehat{E}(xu_i)u_i^*=ex$.

Let $r=\sum_i u_i e u_i^*$. Since $ge_P=e_Pg$ and $g\in P_{\infty}\cap P'$,
\[\begin{split}
r^2&=\sum_i v_i ge_Pv_i^* \sum_j v_j ge_pv_j^*\\
&=\sum_{i,j} v_ige_P(v_i^*v_j)e_Pgv_j^*\\
&=\sum_{i,j}v_igE(v_i^*v_j)e_Pgv_j^*\\
&=\sum_j \sum_i v_iE(v_i^*v_j)e_Pgv_j^*\\
&=\sum_j v_je_Pgv_j^*=r
\end{split}\]
We verify that $r\in B_{\infty}\cap B'$. Let $a\in A$.
\[\begin{split}
ra&=\sum_i v_i ge_Pv_i^*a\\
&=  \sum_i v_i ge_P \left(\sum_k E(v_i^*a v_k)v_k^*\right)\\
&= \sum_i v_i\left(\sum_k E(v_i^*a v_k)ge_Pv_k^*\right)\\
&=\sum_k \sum_i v_i E(v_i^*av_k)ge_Pv_k^*\\
&=\sum_kav_kge_Pv_k^*=ar
\end{split}\]
\[\begin{split}
re_P&=\sum_i v_i ge_Pv_i^*e_P=  \sum_i v_i g E(v_i^*)e_P\\
&= \sum_i v_iE(v_i^*)e_Pg=ge_P
\end{split}\]
\[\begin{split}
e_Pr&=\sum_i e_Pv_i e_Pgv_i^*=  \sum_i E(v_i)e_Pgv_i^*\\
&= \sum_i ge_PE(v_i)v_i^*=ge_P
\end{split}\]
Using the same argument in \cite[Proposition 3.4]{OKT:Rokhlin}, we can show that 
\[(\Index E)ee_Pe=e.\]It follows that for $x,y \in A$
\[e(xe_Py)e=xee_Pey=x(\Index E)^{-1}ey=(\Index E)^{-1}xye=\widehat{E}(xe_Py)e.\]
Now denote by $\{e_{ij}\}_{i,j=1}^n$ the matrix units in $M_n$.  In $B_{\infty}\otimes M_n$
\[\begin{split}
(1-r)\otimes e_{11}&=[\sum_i v_i \otimes e_{1i}][ \sum_k (1-g)e_P\otimes e_{kk}][ \sum_j v_j^*\otimes e_{j1}]\\
&\sim [ \sum_k (1-g)e_P\otimes e_{kk}][\sum_j v_j^* \otimes e_{j1}][\sum_i v_i \otimes e_{1i}][ \sum_k (1-g)e_P\otimes e_{kk}]\\
&\lesssim \sum_k (1-g)e_P \otimes e_{kk}\\
&\lesssim \sum_k r_ke_P\otimes e_{kk}\\
&\sim (\sum_k r_ke_p)\otimes e_{11}\\
&\lesssim pe_Pp\otimes e_{11}\\
&\lesssim p\otimes e_{11}.
\end{split} \]
Hence we conclude that $1-r \lesssim p \lesssim z$ in $B_{\infty}$.

Conversely, suppose that $\widehat{E}$ is tracially approximately representable. Take a positive nonzero element $z$ in $A_{\infty} (\subset B_{\infty})$. By Lemma \ref{L:projectioninsubalgebra} there is  a projection $p$ in $P_{\infty}$ which is Murray-von Neumann equivalent to a projection $\overline{zA_{\infty}z}$. For $pe_Pp \in B_{\infty}$, we have a projection $e\in A_{\infty}\cap A'$, a projection $r \in B_{\infty}\cap B'$, and a finite set $\{u_i\} \subset B$ such that 
\begin{eqnarray}\label{condition1}\label{condition2}
eze=\widehat{E}(z)e \quad \forall z \in B,\\
\sum_i u_ieu_i^*=r, \, re=e=er,
\end{eqnarray}
\vspace{-5mm}
\begin{equation}\label{condition3}
1-r  \,\text{is Murray-von Neumann equivalent to a projection in $\overline{pe_ppB_{\infty}pe_Pp}$}.
\end{equation}

From (\ref{condition1}) we have $(\Index E)ee_Pe=e$. Let $w_i=(\Index E)\widehat{E}(e_Pu_i)\in A$.
Then 
\[\begin{split}
\left(\sum_i w_i e w_i^*\right)&=\sum_i (\Index E)^2 ee_P u_ieu_i^*e_Pe\\ 
&=(\Index E)^2 ee_P\left(\sum_i u_i eu_i^*\right)e_Pe\\
&=(\Index E)^2ee_Pre_Pe\\
&=(\Index E)^2ere_Pe\\
&=(\Index E)^2ee_Pe\\
&=(\Index E)e. 
\end{split}
\]
 It follows that $\sum_i ww_i^*=\Index E$ since $A\ni x \to xe$ is injective. Thus $E_{\infty}(\sum_i w_iew_i^*)=(\Index E)E_{\infty}(e)$ which will be denoted by $g$. Using the argument in the proof of \ref{T:dualityofinclusion} we can show that $ge_P=re_P$.

By (\ref{condition3}), there is a partial isometry $v$ in $B_{\infty}$ such that $vv^*=1-r$ and $vv^*=q \in \overline{pe_ppB_{\infty}pe_Pp}$. Set $w=ve_p$. Then $w^*w=(1-r)e_P$ and $ww^*=q_0 \leq q $. Note that $q_0 \in \overline{pP_{\infty}e_Pp}$. Now let  $u=q_0w(1-r)e_P$. It is easily checked that $u^*u=(1-g)e_P$ and $uu^*=q_0$. Note that $u=pce_P$ for some $c\in P_{\infty}$. 
Then consider $\widehat{u}=(\Index E)\widehat{E}_{\infty}(u) \in A_{\infty}$.
\[ \widehat{u}^*\widehat{u}e_P= c^*p pc e_P
=e_Pc^*pce_P=u^*u=(1-g)e_P.\] This implies that $\widehat{u}$ is a partial isometry in $A_{\infty}$ such that $\widehat{u}^*\widehat{u}=1-g$ and $\widehat{u}\widehat{u}^* \leq p$. Since $p$ is Murray-von Neumann equivalent to a projection to some projection in $\overline{xA_{\infty}x}$, so is $1-g$.

(2): Suppose $E$ is tracially approximately representable. Take a nonzero positive element $z$ in  $B_{\infty}$. By Lemma \ref{L:projectioninsubalgebra}, we have a projection $p$ in $A_{\infty}$ such that $p$ is Murray-von Neumann equivalent to a projection in $\overline{zB_{\infty}z}$. For this $p$  there is a projection $e$ in $P_{\infty} \cap P'$ such that 
\begin{equation}\label{E:approxcondition(1)}
exe=E(x)e
\end{equation}
 for any $x\in A$, a projection $g$ in $A_{\infty}\cap A'$,  and a finite set $\{u_i\}$ in $A$ such that 
\begin{equation}\label{E:approxcondition(2)}
\sum_i u_i eu_i^*=g, \sum_i u_iE(u_i^*x)e=xe, \sum_i eE(xu_i)u_i^*=ex \quad \forall x \in A
\end{equation} 
(In fact, $ge=e$ implies that $gxe=(\sum_i u_i eu_i^*)xe=\sum_i u_i E(u_i^*x)e= xe$ for $x \in A$. Similarly, $eg=e$ implies that $\sum_i e E(xu_i)u_i^*=ex$.)
\begin{equation}\label{E:approxcondition(3)}
\text{$1-g$ is Murry-von Neumann equivalent to a projection in $\overline{pA_{\infty}p}$}
\end{equation}
Define $f$ in $B_{\infty}$ by 
\[ f=\sum_i u_ie e_P u_i^*.\] Then $f$ is a projection since
\[ \begin{split}
f^2&=\sum_i  u_i ee_Pu_i^* \sum_j u_jee_Pu_j^*\\
&=\sum_{i,j} u_ie_peu_i^*u_jee_Pu_j^*\\
&=\sum_j \left(\sum_i u_i E(u_i^* u_j)e\right)e_Pu_j^* \quad \text{by (\ref{E:approxcondition(1)})}\\
&=\sum_j u_je  e_Pu_j^* =f \quad \text{by (\ref{E:approxcondition(2)})}.
\end{split}\]

 Note that for any  $a\in A$
\[ \begin{split}
fa&=\sum_i  u_i ee_Pu_i^*a\\
&=\sum_i u_ie_peu_i^*a  \quad (e_Pee_P=E_{\infty}(e)E_P=ee_P)\\
&=\sum_i u_ie_P \left(\sum_j eE(u_i^*a u_j)u_j^*\right) \quad \text{by (\ref{E:approxcondition(2)})}\\
&=\sum_j \left(\sum_i u_iE(u_i^*au_j)e \right) e_Pu_j^* \quad \text{by (\ref{E:approxcondition(2)})}\\
&=\sum_j au_j ee_Pu_j^*=af.
\end{split}\]
\vspace{-5mm}
Moreover, 
\[ \begin{split}
fe_P&=\left(\sum_i  u_i ee_Pu_i^*\right)e_P\\
&=\sum_i u_ieE(u_i^*)e_P\\
&=\sum_i \left(u_i E(u_i^*)e\right)e_p \\
&=ee_P \quad \text{by (\ref{E:approxcondition(2)})}\\
\end{split}
\]
and 
\[\begin{split}
e_Pf&=e_P \left(\sum_i u_i e_Pe u_i^* \right)\\
&=\sum_i e_Pu_ie_P e u_i^*\\
&=\sum_i E(u_i)e_Peu_i^*\\
&=\sum_i e_Pe E(u_i)u_i^*\\
&=e_Pe \quad \text{by (\ref{E:approxcondition(2)})}
\end{split}\]
Thus we showed that $f \in B_{\infty}\cap B'$. Finally \[\widehat{E}_{\infty}(f)=(\Index E)^{-1}\left(\sum_i u_i eu_i^*\right).\]
Therefore $(\Index E)\widehat{E}_{\infty}(f)=g$ is a projection such that 
$1-g$ is Murry-von Neumann equivalent to a projection in $\overline{zB_{\infty}z}$ since $1-g \lesssim p\lesssim z$.
 
Conversely, suppose that $\widehat{E}$ has the tracial Rokhlin property. For any positive nonzero element $z\in A_{\infty}$ we consider a hereditary subalgebra $\overline{zA_{\infty}z}$. Since $A$ has the property (SP), so is $A_{\infty}$. Thus there is a projection $p$ in $\overline{zA_{\infty}z}$ which is also contained in $\overline{zB_{\infty}z}$.   Then we consider  mutually orthogonal nonzero projections $r_1, r_2$ in $pA_{\infty}p$ such that $r_1 \lesssim r_2$. Then we can think of a Rokhlin projection $f'$ for $r_2$. For $f'r_1$ we can take a Rokhlin projection $f\in B_{\infty}\cap B'$ such that $(\Index E)\widehat{E}_{\infty}(f)$ is a projection such that $1-(\Index E)\widehat{E}_{\infty}(f)$ is Cuntz subequivalent to $f'r_1$. Then define an element $e=(\Index E)\widehat{E}_{\infty}(fe_P)$ in $A_{\infty}$.Using the fact that $fe_P=e_pf=(\Index E)\widehat{E}_{\infty}(fe_P)e_P$, we can show that 
$e$ is a projection. By the same argument in \cite[Proposition 3.4]{OKT:Rokhlin}, we can show that $e$ is in $P_{\infty}\cap P'$ and $eae=E(a)e$ for any $a\in A$.  Then take a quasi-basis $\{(u_i,u_i^*)\}$ for $E$. Then \[
\begin{split}
\sum_i u_i e u_i^*&=(\Index E)\sum_i u_i \widehat{E}_{\infty}(fe_P)u_i^*\\
&=(\Index E)\widehat{E}_{\infty}(\sum_i u_ife_Pu_i^*)\\
&=(\Index E)\widehat{E}_{\infty}(f) 
\end{split}
\]   Thus $g=\sum_i u_i e u_i^*$ is a projection in $A_{\infty}\cap A'$. By applying Lemma \ref{L:inclusiontechnical} to $f$,$1-g$, and $f'r_1$, we have $1-g$ is Cuntz subequivalent to $r_2$ in $A_{\infty}$. So $1-g \lesssim r_2 \lesssim p \lesssim z$ in $A_{\infty}$ .
\end{proof}
\begin{cor}
Let $G$ be a finite abelian group, $\alpha$ an outer action of $G$ on an infinite dimensional simple separable unital $C\sp*$-algebra $A$ such that $A\rtimes_{\alpha}G$ is simple, and $E$ as in Proposition  \ref {P:ERokhlin}. Then $\alpha$ is tracially approximately representable if and only if $E$ is tracially approximate representable.
\end{cor}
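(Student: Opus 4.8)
The plan is to obtain the Corollary by concatenating three equivalences that are already available: Phillips' duality for group actions (Theorem \ref{T:dualityofgroupaction}), the duality for inclusions (Theorem \ref{T:tracialdualtiyforinclusion}), and the translation between the tracial Rokhlin property of a finite group action and that of the associated fixed-point conditional expectation (Proposition \ref{P:EtracialRokhlin}); the only new ingredient is a dictionary identifying the basic construction for $A^\alpha\subset A$ with $A\rtimes_\alpha G$ and the dual expectation $\widehat E$ with the fixed-point expectation of the dual action $\widehat\alpha$.

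First I would record the structure of the inclusion. Since $\alpha$ is outer and $A$ is simple, the canonical conditional expectation $E\colon A\to A^\alpha$ is of index-finite type with $\Index E=|G|$ (Theorem \ref{T:saturated}), and $A^\alpha\subset A$ has finite depth (it is of depth $2$), so Theorem \ref{T:tracialdualtiyforinclusion}(2) applies to $E$. Next, the basic construction $B=C^*\langle A,e_{A^\alpha}\rangle$ is canonically isomorphic to $A\rtimes_\alpha G$, the Jones projection $e_{A^\alpha}$ corresponding to $|G|^{-1}\sum_g\lambda^\alpha_g$; under this isomorphism the defining formula $\widehat E(\lambda(x)e_{A^\alpha}\lambda(y))=(\Index E)^{-1}xy$ shows that the dual expectation $\widehat E\colon B\to A$ is carried to the canonical conditional expectation of $A\rtimes_\alpha G$ onto $A$, which is exactly the fixed-point expectation for the dual action $\widehat\alpha\colon\widehat G\curvearrowright A\rtimes_\alpha G$ (its fixed-point algebra being $A$, since the characters of $\widehat G$ separate the group elements appearing in $\sum_g a_g\lambda^\alpha_g$). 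Finally, $A\rtimes_\alpha G$ is simple, separable, unital and infinite dimensional, and by Takai duality \cite{T:duality} $(A\rtimes_\alpha G)\rtimes_{\widehat\alpha}\widehat G\cong A\otimes\mathcal K(l^2(G))$ is simple, so $\widehat\alpha$ is an outer action of the finite abelian group $\widehat G$; hence Proposition \ref{P:EtracialRokhlin}, applied with $\widehat\alpha$ in place of $\alpha$, yields that $\widehat\alpha$ has the tracial Rokhlin property if and only if $\widehat E$ does.

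With these identifications the proof is the chain of equivalences
\[
\text{$\alpha$ tracially approx.\ representable}
\;\Longleftrightarrow\;
\text{$\widehat\alpha$ has the tracial Rokhlin property}
\;\Longleftrightarrow\;
\text{$\widehat E$ has the tracial Rokhlin property}
\;\Longleftrightarrow\;
\text{$E$ tracially approx.\ representable},
\]
where the first equivalence is Theorem \ref{T:dualityofgroupaction}(2), the second is Proposition \ref{P:EtracialRokhlin} applied to $\widehat\alpha$ together with the identification $\widehat E=$ fixed-point expectation of $\widehat\alpha$, and the third is Theorem \ref{T:tracialdualtiyforinclusion}(2).

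The only real work is bookkeeping: checking that the hypotheses of the three cited results are satisfied by $(A,\alpha)$ and by $(A\rtimes_\alpha G,\widehat\alpha)$, and verifying the standard identification $B\cong A\rtimes_\alpha G$ carrying $\widehat E$ to the fixed-point expectation of $\widehat\alpha$. I expect that last identification to be the subtlest point, but it is a classical fact about Watatani's basic construction for a finite group action and needs only the explicit formula for $\widehat E$ and the image of $e_{A^\alpha}$ recorded above; no new estimates are required beyond those already carried out in Theorems \ref{T:dualityofgroupaction} and \ref{T:tracialdualtiyforinclusion}.
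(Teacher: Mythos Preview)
Your proposal is correct and follows essentially the same route as the paper: the paper's proof simply says to repeat the argument of Proposition \ref{P:approxirepresentableviainclusions} with Theorem \ref{T:dualityofgroupaction}, Proposition \ref{P:EtracialRokhlin}, and Theorem \ref{T:tracialdualtiyforinclusion} in place of their strict analogues, which is precisely the chain of three equivalences you wrote down. Your explicit verification of the identification $C^*\langle A,e_{A^\alpha}\rangle\cong A\rtimes_\alpha G$ carrying $\widehat E$ to the fixed-point expectation of $\widehat\alpha$, and of the hypotheses (finite index, finite depth, simplicity and outerness on the dual side) needed to invoke each cited result, is more detailed than what the paper records but matches its intended argument exactly.
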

\begin{proof}
The proof goes exactly same as the proof of Proposition \ref{P:approxirepresentableviainclusions} only replacing Theorem \ref{T:dualityoffiniteabeliangroup}, Proposition \ref {P:ERokhlin}, Theorem \ref{T:dualityofinclusion} with Theorem \ref{T:dualityofgroupaction}, Proposition \ref{P:EtracialRokhlin}, Theorem \ref{T:tracialdualtiyforinclusion} respectively. 
\end{proof}
\section{Acknowledgement}
This research was carried out during the first author's stay at KIAS and his visit to Ritsumeikan University. He would like to appreciate both institutions for excellent supports. The second author also would like to appreciate  KfAS for the worm hospitality when he visited it.  



\begin{thebibliography}{99}



\bibitem{BS} S. Barlak, G. Sazab\'{o}, \emph{Sequentially split $*$-homomorphisms between $C\sp*$-algebras}, Int. J. Math.  99(2016), 



\bibitem{Cu} J. Cuntz \emph{Dimension functions on simple $C^*$-algebras}, Math. Ann. {\bf 233}(1978), 145--153.,



\bibitem{EN} G.\ A.\ Elliott and Z.\ Niu,
{\it On tracial approximation},
J. Funct. Anal. \textbf{254}(2008), 396 - 440.

\bibitem{ET}G.\ A.\ Elliott and A.\ Toms, 
{\it Regularity properties in the classification program for separable amenable $C^*$-algebras}, 
Bull. Amer. Math. Soc. 45(2008), no. 2., 229--245.

\bibitem{HO}I. Hirshberg and J. Orovitz, \emph{Tracially $\mc{Z}$-absorbing $C\sp*$-algebras}, J. Funct. Anal. 265(2013), 765 --785. 

\bibitem{HW:Rokhlin} I. Hirshberg, W. Winter \emph{Rokhlin actions and self-absorbing C∗-algebras}, Pacific J. Math. 233 (2007), no. 1, pp. 125–143.

\bibitem{Izumi:finite} M. Izumi, \emph{ Finite group actions on C∗-algebras with the Rokhlin property I}, Duke Math. J. 122 (2004), no. 2,  233--280.

\bibitem{JP:saturated} J. Jeong, K. Park \emph{Saturated actions by finite-dimensional Hopf *-algebras on $C\sp*$-algebras}, International J. Math. 19(2008), no.2, 125--144 

\bibitem{Lin:tracial} H.\ Lin, {\it Tracial AF C*-algebras},
Trans.\ Amer.\ Math.\ Soc.\ 353(2001), 693--722.

\bibitem{Lin:classification}H. Lin,  {\it Classification of simple $C\sp*$-algebras of tracial topological rank zero} Duke Math. J. {\bf125}(2004), 91--119.

\bibitem{LeeOsaka} H. Lee, H. Osaka, \emph{Tracially sequentially-split $*$-homomorphisms between $C\sp*$-algebras} preprint arXiv:1707.07377

\bibitem{Osaka:(SP)-property}H. Osaka \emph{(SP)-property for a pair of $C\sp*$-algebras} J. Operator Theory{\bf 46}(2001), 159--171

\bibitem{OKT:Rokhlin}H. Osaka, K. Kodaka, T. Teruya, \emph{Rokhlin property for inclusion of $C^*$-algebras with a finite Watatani index} Operator structures and dynamical systems,  Contemp.Math. 503(2009), Amer. Math. Soc. 177--195

\bibitem{OT1} H. Osaka,  T. Teruya,  \emph{The Jiang-Su absorption for inclusions of $C\sp*$-algebras}, to appear in Canad. J. Math, arXiv: 1404.7663.

\bibitem{Phillips:tracial} N.\ C.\ Phillips,
{\it The tracial Rokhlin property for  actions of finite groups on $C^*$-algebras}
Amer.~J.~Math.~133(2011), no. 3, 581--636, arXiv:math.OA/0609782.


\bibitem{Ro:UHF1}M. R\o rdam, \emph{On the structure of simple C*-algebras  tensored  with a UHF-algebra}, J. Funct. Anal. \textbf{100}(1991), 1--17.
 
\bibitem{Ro:UHF2} M.~R\o rdam,
{\it On the structure of simple C*-algebras tensored with a UHF-algebra II},
J.~Funct.~Anal.~\textbf{107}(1992), 255 - 269. 

\bibitem{T:duality}H. Takai \emph{On a duality for crossed products of C∗-algebras}, J. Funct. Anal. 19 (1975), 25–39.

\bibitem{TW:ssa}A. Toms, W. Winter, \emph{Strongly self-absorbing $C\sp*$-algebras}, Trans. A. M. S. 359(2007), no.8, 3999-4029.

\bibitem{W:index} Y.\ Watatani,
{\it Index for $C^*$-subalgebras},
Mem.\ Amer.\ Math.\ Soc.\,
{\bf 424},
Amer.\ Math.\ Soc., \ Providence, R.\ I.,
(1990).





\end{thebibliography}
\end{document}